\theoremstyle{plain}
\newtheorem{thm}{Theorem}[section]
\newtheorem{lemma}[thm]{Lemma}
\newtheorem{cor}[thm]{Corollary}
\theoremstyle{definition}
\newtheorem{defn}{Definition}
\newtheorem*{remark}{Remark}
\newcommand{\R}{\mathbb{R}}
\newcommand{\ub}{\ddot u}
\title{Knot Projections with a Single Multi-crossing}
\author[Adams]{Colin Adams}
\address{Colin Adams,Williams College}
\email{Colin.C.Adams@williams.edu}
\author[Crawford]{Thomas Crawford}
\address{Thomas Crawford, Boston College}
\email{tomc1390@gmail.com}
\author[DeMeo]{Benjamin DeMeo}
\address{Benjamin DeMeo, Williams College}
\email{bd2@williams.edu}
\author[Landry]{Michael Landry}
\address{Michael Landry, University of Calfornia, Berkeley}
\email{michaellandry@berkeley.edu}
\author[Lin]{Alex Tong Lin}
\address{Alex Tong Lin, University of California, Santa Barbara}
\email{axtgln@gmail.com}
\author[Montee]{MurphyKate Montee}
\address{MurphyKate Montee, Notre Dame University}
\email{mmontee@nd.edu}
\author[Park]{Seojung Park}
\address{Seojung Park, Korea Advanced Institute of Science and Technology}
\email{micha82@kaist.ac.kr}
\author[Venkatesh]{Saraswathi Venkatesh}
\address{Saraswathi Venkatesh, California Institute of Technology}
\email{sarsjv@gmail.com} 
\author[Yhee]{Farrah Yhee}
\address{Farrah Yhee, Wellesley College}
\email{farrah.yhee@gmail.com} 
\date{}
\begin{document}
\maketitle

\begin{abstract}
Introduced recently, an $n$-crossing is a singular point in a projection of a link at which $n$ strands cross such that each strand travels straight through the crossing. We introduce the notion of an \"ubercrossing projection, a knot projection with a single $n$-crossing. Such a projection is necessarily composed of a collection of loops emanating from the crossing. We prove the surprising fact that all knots have a special type of \"ubercrossing projection, which we call a petal projection, in which no loops contain any others. The rigidity of this form allows all the information about the knot to be concentrated in a permutation corresponding to the levels at which the strands lie within the crossing. These ideas give rise to two new invariants for a knot $K$: the \"ubercrossing number $\ub(K)$, and petal number $p(K)$. These are the least number of loops in any \"ubercrossing or petal projection of $K$, respectively. We relate $\ub(K)$ and $p(K)$ to other knot invariants, and compute $p(K)$ for several classes of knots, including all knots of nine or fewer crossings.
\end{abstract}

\section{Introduction}

Classically, so-called \emph{regular} projections of knots, in which each crossing consists of one overstrand and one understrand, have played a central role in knot theory. In \cite{Ad}, Adams  deviates from this norm by considering an $n$-crossing (also known as a multi-crossing), which he defines to be a singular point in a projection at which $n$ strands cross, such that each strand bisects the crossing. We say an $n$-crossing has \emph{multiplicity} $n$, and identify the levels of the strands in an $n$-crossing with integers $1, 2, \dots, n$, where $i > j$ indicates that strand $i$ crosses over strand $j$. Figure \ref{fig:4crossing} shows an example of a 4-crossing viewed slightly from the side as well as from the top.

\begin{figure}[!htbp]
\includegraphics[height=30mm]{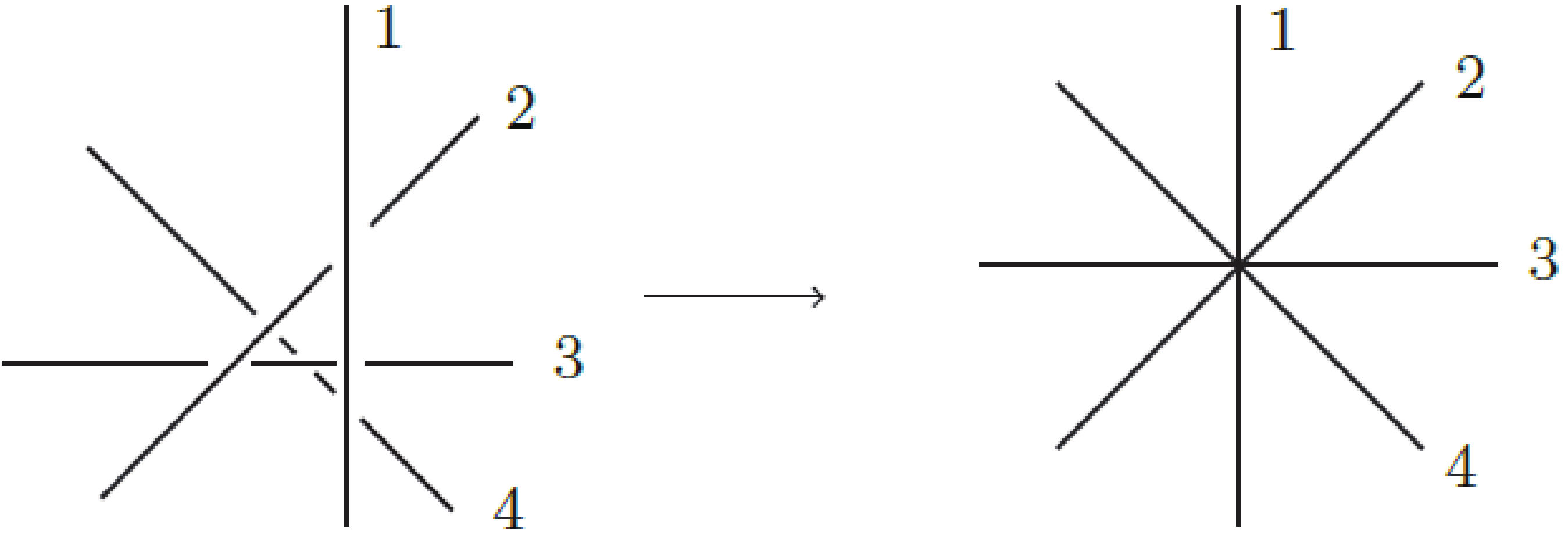}
\caption{An example of a 4-crossing.}
\label{fig:4crossing}
\end{figure}
Let $L$ be a knot or link. Theorem 4.3 in \cite{Ad} states that $L$ has an \emph{$n$-crossing projection} for every $n$, i.e. a projection where each crossing has multiplicity $n$.  Hence we have  the notion of the \emph{$n$-crossing number} of $L$, denoted $c_n(L)$, which is the least number of $n$-crossings in any $n$-crossing projection of $L$.

With these new ideas in mind, it seems natural to ask if there is a projection of $L$ with a single $n$-crossing for some $n$, as in Figure \ref{fig:tref}. Note that the existence of arc presentations of knots gives the existence of knot projections with only one singularity, but our requirement that each strand bisects the crossing is considerably stricter than this. In Section 2 we answer this question in the affirmative. We call this an \emph{\"ubercrossing projection} and call the single crossing an \emph{\"ubercrossing}. This gives a new invariant of knots and links, the \emph{\"ubercrossing number}, which we denote $\ub(L)$ and define to be the least $n$ such that $c_n(L)=1$.

\begin{figure}[htbp!]
	\centering
	\includegraphics[height=40mm]{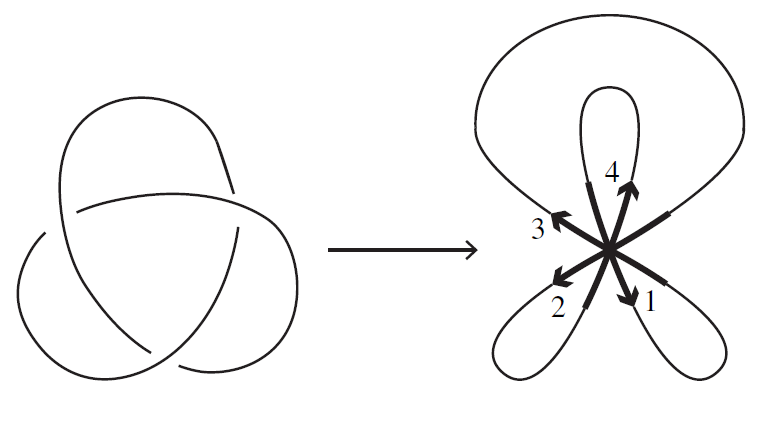}
	\caption{Trefoil knot.}
	\label{fig:tref}
\end{figure}

Let $P$ be an \"ubercrossing projection of $L$ in the plane; it is composed of an \"ubercrossing and a collection of loops emanating from the crossing. A \emph{nesting loop} of $P$ is a loop with at least one other loop in its interior.  
 
A \emph{petal projection} is an \"ubercrossing projection that has no nesting loops (see Figure \ref{fig: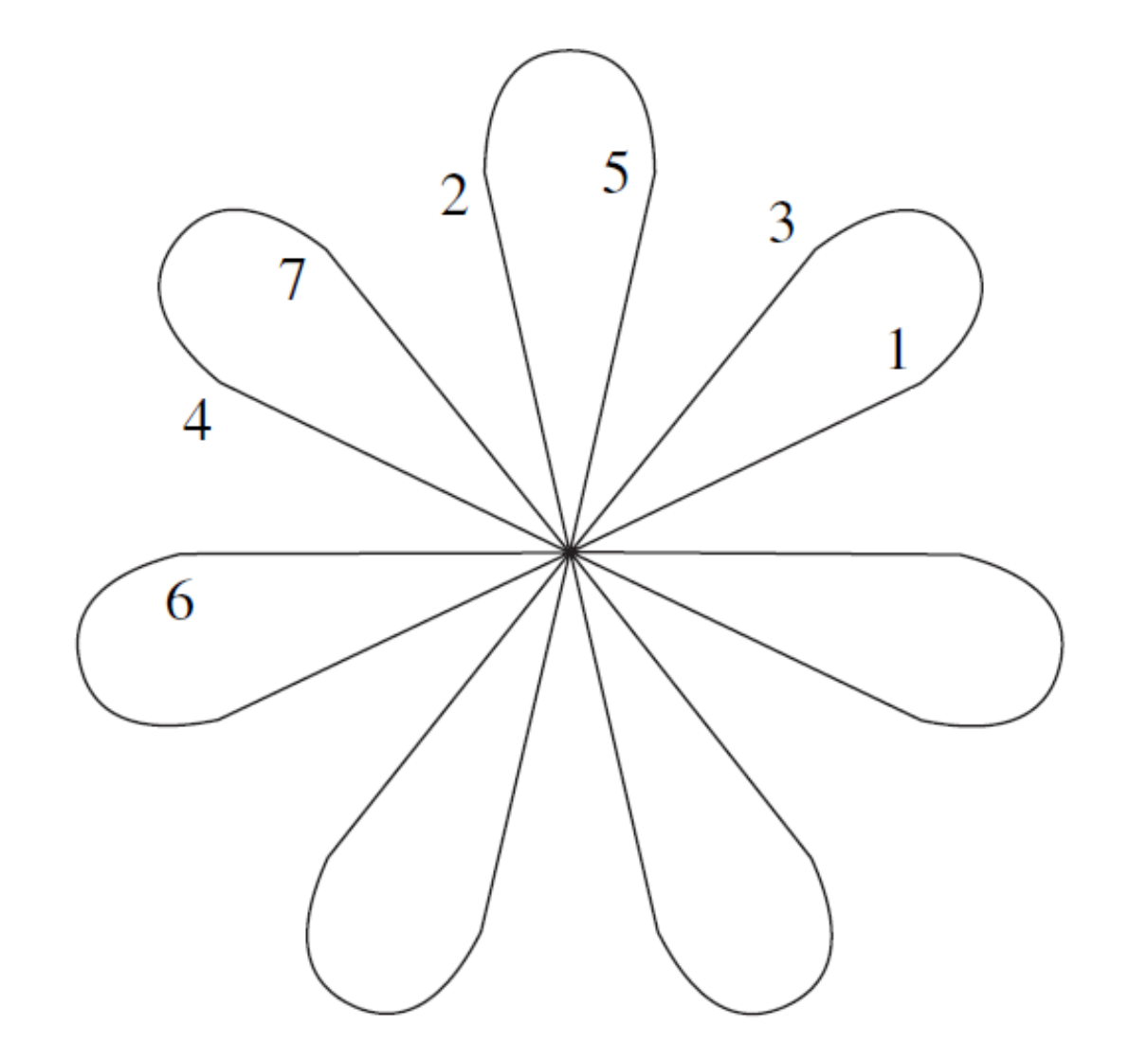}). In Section 2, we prove the surprising fact that any knot $K$ has a petal projection. Thus we can define yet another invariant for knots called the \emph{petal number}, denoted $p(K)$, which is the least number of loops in any petal projection of $K$.

\begin{figure}
\centering
\includegraphics[height=30mm]{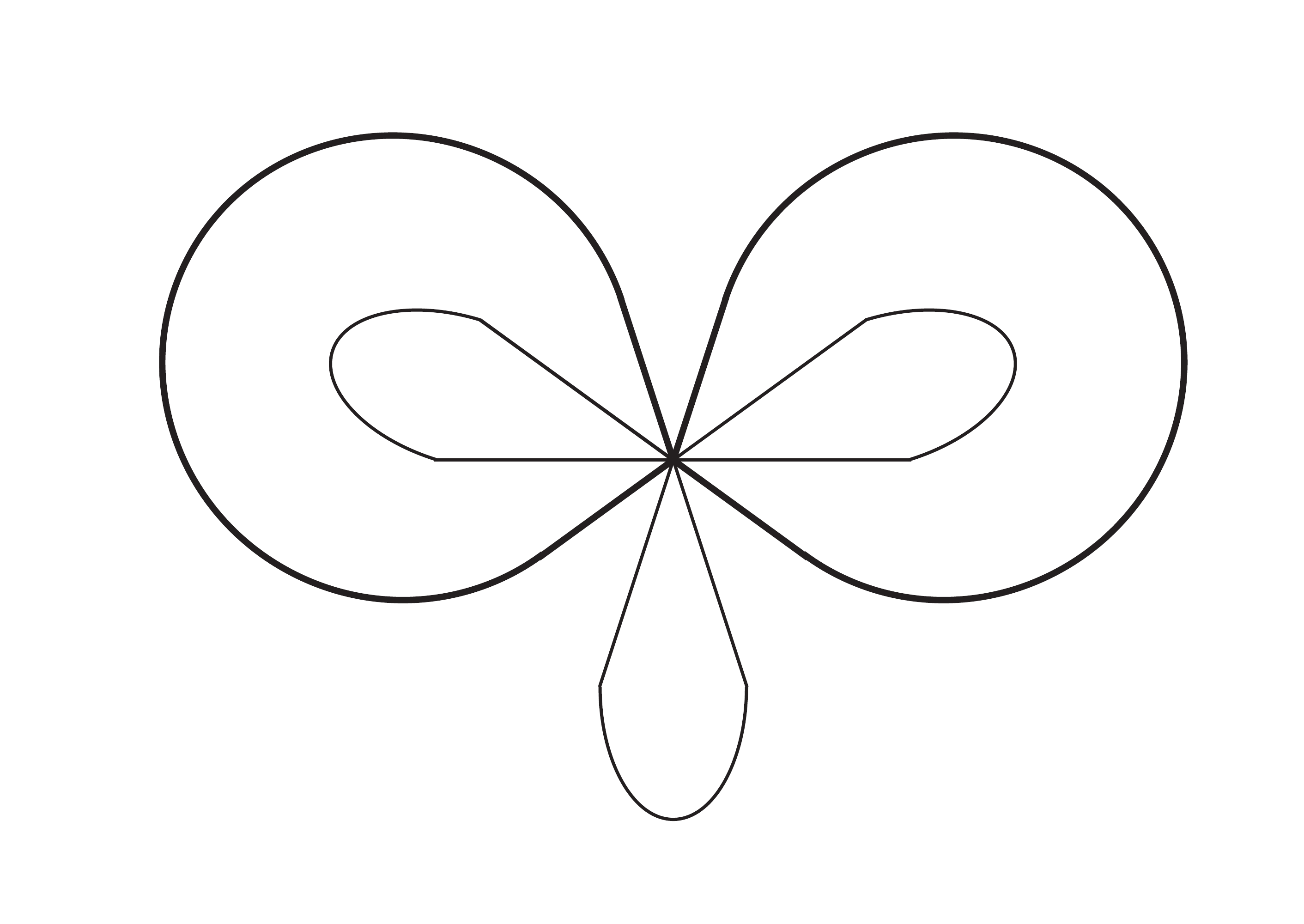}
\caption{The loops shown in bold are nesting loops.}
\end{figure}

\begin{figure}[htbp!]
\includegraphics[height=40mm]{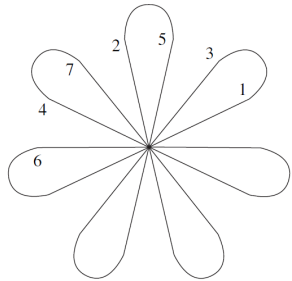}
\caption{Petal projection of $4_1$}
\label{fig:petal.pdf}
\end{figure}

In Section 3, we discuss relations between $\ub(K)$, $p(K)$, and other knot invariants. In particular we relate them to arc index, braid index, and stick number.  We obtain bounds on $\ub(K)$ and $p(K)$ using these invariants.

In Section 4, we introduce a collection of moves that transform regular projections of certain classes of knots into \"ubercrossing and petal projections. In this way we obtain upper bounds on $\ub(K)$ in terms of $c(K)$ for these knots, and in some cases actually determine $p(K)$.

In Section 5 we list the petal number of all knots with nine or fewer crossings.


\section{Existence of \"Ubercrossing Projections}

We now present an algorithm which places any knot in an \"ubercrossing projection. Steps 4 and 5 are justified in the proof which follows.
\begin{enumerate}
\item[\emph{Step 1.}] Isotope a projection of the knot so that all crossings are on a vertical line oriented upwards, which we call $A$.  Orient the knot, and label the rightmost point of the projection the base point, $p$.  Starting from the base point and following the orientation, label each crossing $o$ or $u$ if the crossing is first traversed as an overpass or underpass respectively.  Then isotope the projection so that the overcrossings are on the right side of $A$, and the undercrossings are on the left side of $A$.
\item[\emph{Step 2.}] Fixing the crossing points, rectilinearize the knot so that all line segments are parallel or perpendicular to $A$ in the plane. Notice that $p$ is now on a rightmost line segment which therefore has no crossings on it.  Choosing any point along this line segment as a base point will yield the same labeling of overcrossings and undercrossings, so choose a new basepoint, denoted $\tilde{p}$, to be the endpoint of this segment which is furthest along according to the orientation. Label the other endpoint of this vertical segment $q$. Call the segments that intersect $A$ \emph{intersecting segments}, and all other segments \emph{non-intersecting segments}.
\item[\emph{Step 3.}] Further isotope this projection so that the arc from $\tilde{p}$ to the first intersection of the projection with $A$ is a straight line. This is possible since all crossings of this path are first encountered as overcrossings and hence the arc is unknotted.  Similarly, isotope the arc from $q$ to the last intersection of the projection with the axis so that it is straight.  We call this projection $\widetilde{P}$, and we will use it to obtain a conformation of $K$ in $\mathbb{R}^3$ such that projection down $A$ yields an \"ubercrossing projection.
\item[\emph{Step 4.}] Begin at $\tilde{p}$ and follow the orientation. We leave the first segment (note that it is horizontal) as it is in the projection plane. Each subsequent intersecting segment is rotated counterclockwise around $A$ a bit farther out of the projection plane than the previous one. We connect the endpoints of these segments as follows. As we move along a connected path of non-intersecting segments that starts at the final endpoint of an intersecting segment and ends at the initial endpoint of the subsequent intersecting segment, we continuously rotate monotonically in such a manner as to maintain the fact that the projection back to the plane is $\widetilde P$.
\item[\emph{Step 5.}] Connect the image of $q$ to $\tilde{p}$ with a straight line segment.
\end{enumerate}
		

The projection back to the original projection plane yields $\widetilde{P}$. The further rotations of subsequent intersecting segments ensures that the o and u labels placed on the crossings are respected. Hence, the knot type is unchanged. The following proof makes rigorous this method of rotation by explicit construction of a map $\varphi$ which performs the desired rotation without changing the knot type. The technical details lie in the construction of $\varphi$.

\begin{thm} \label{thm:uberalgorithm}
Every knot has an \"ubercrossing projection.
\end{thm}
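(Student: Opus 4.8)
The plan is to make the five-step algorithm rigorous by constructing an explicit ambient isotopy (or rather an explicit embedding) realizing the promised rotation. Since Steps 1--3 are elementary planar isotopies whose validity is already argued in the text, the real content is Steps 4 and 5: starting from the rectilinear projection $\widetilde{P}$, I would build a conformation of $K$ in $\mathbb{R}^3$ whose projection down the axis $A$ has a single multi-crossing on $A$, and verify that this conformation is ambient isotopic to the original knot.

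\medskip

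First I would set up coordinates so that $A$ is the $z$-axis (say), the projection plane is the $xz$-plane, and ``rotating a segment around $A$'' means applying, to each point at height $z$, a rotation through some angle $\theta$ about $A$ in the plane $\{z = \text{const}\}$. The key is to choose a continuous, monotone angle function $\theta \colon [0,1] \to \mathbb{R}$ along the knot (parametrized from $\widetilde{p}$ following the orientation): $\theta \equiv 0$ on the initial horizontal segment, then $\theta$ increases, being constant and equal to $\tfrac{k\pi}{N}$ (or similar) on the $k$-th intersecting segment and interpolating monotonically on the intervening non-intersecting portions, chosen so that over each non-intersecting arc the composition of the rotation with the projection to the $xz$-plane reproduces that arc of $\widetilde{P}$ exactly. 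Define $\varphi$ to be the resulting map from $\widetilde{P} \subset \mathbb{R}^2 \hookrightarrow \mathbb{R}^3$ to $\mathbb{R}^3$. I would then check three things: (i) $\varphi$ is injective, i.e.\ $\varphi(\widetilde P)$ is an embedded circle --- two points can only collide in $\mathbb{R}^3$ if they project to the same point of $\widetilde P$, which happens only at the crossing points on $A$, and there the distinct rotation angles separate them; (ii) the projection of $\varphi(\widetilde P)$ down $A$ is $\widetilde P$ away from $A$ and a single point on $A$ (since every intersecting segment, once rotated, meets $A$; this is exactly why all crossings collapse to one $\ddot u$bercrossing), with each strand passing straight through, giving a genuine $n$-crossing; (iii) the o/u data is respected, so $\varphi(\widetilde P)$ has the same knot type as the original --- here the monotonicity of $\theta$ guarantees that the strand labeled $k$ (later along the orientation) lies at a larger angle, hence ``higher'' in the cyclic stacking order, matching the over/under labels arranged by Step 1. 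Step 5 just closes up the curve with a straight chord from $\varphi(q)$ back to $\widetilde p$, which by Step 3 can be taken disjoint from the rest.

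\medskip

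I expect the main obstacle to be item (iii) together with making precise the claim that ``continuously rotating so as to maintain the planar projection'' is actually possible on the non-intersecting segments while keeping $\theta$ monotone and keeping the curve embedded. One has to check that the non-intersecting segments never need to ``back up'' in angle --- this is where the arrangement from Step 1 (overcrossings on the right of $A$, undercrossings on the left) and the rectilinearization in Step 2 are essential, since they guarantee that consecutive intersecting segments are encountered in a compatible cyclic order. A clean way to handle this is to first prove the statement for the special conformation $\widetilde P$ and only afterwards invoke that $\widetilde P$ is isotopic to the original projection; that isolates all the $3$-dimensional bookkeeping to a single well-controlled diagram. Once $\varphi(\widetilde P)$ is exhibited as an embedded knot of the correct type projecting to a single $n$-crossing with $n$ equal to the number of intersecting segments, the theorem follows, and as a byproduct $\ddot u(K)$ is bounded above by that number.
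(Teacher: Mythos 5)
Your proposal is essentially the paper's own proof: the paper likewise realizes Steps 4--5 by an explicit map into cylindrical coordinates about $A$, assigning the $j$-th intersecting segment the constant angle $j/(n-1)$ and interpolating the angle monotonically (via arclength) on the non-intersecting arcs so that the radius $f_x(t)/\cos\theta(t)$ keeps the $xz$-projection equal to $\widetilde P$, and then makes exactly your three checks (injectivity of the projection down $A$ away from the origin, preservation of the $o$/$u$ labels via monotonicity of the angle, and collapse of all intersecting segments to lines through the origin forming the single multi-crossing). The only point to be careful about in your version is that all rotation angles must stay strictly inside $(-\pi/2,\pi/2)$ so that the projection-preserving lift $r=x/\cos\theta$ exists; your tentative choice $k\pi/N$ can violate this, whereas the paper's choice $j/(n-1)\in[0,1]$ does not.
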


\begin{proof}  
Given a projection $P$ of a knot, perform Steps 1-3 of the above algorithm to obtain $\widetilde P$. Let $n$ be the number of intersecing segments, and let $e_j^-$ and $e_j^+$ be the endpoints of the $j$-th intersecting segment (with respect to $\tilde p$ and the orientation) to the left and right of $A$, respectively.  Notice that $\tilde{p}=e_0^+$ and $q = e_{n-1}^+$.
Let the projection plane be the $xz$-plane embedded in $\R^3$, with the positive $y$-axis pointing into the plane. Assign a rectangular coordinate $(x_j^\pm, z_j)$ to each endpoint $e_j^\pm$ by choosing $A$ to be the $z$-axis and the lowest intersection of $\widetilde P$ and $A$ to be the origin. Finally, let $f\colon I\to \R^2$, $t\mapsto (f_x(t),f_z(t))$, be a regular parametrization of $\widetilde P$ which agrees with the orientation such that $f(0)=f(1)=\tilde p$, and define $t_{j}^\pm=f^{-1}(e_j^\pm)$, $t_q=f^{-1}(q)$.

We define the rotation by letting the $i$th intersecting segment of $\widetilde P$ rotate $i/(n-1)$ radians about the $z$-axis, and connecting their endpoints by appropriately rotating the non-intersecting segments with respect to their arclength. Let $\gamma(a, b)$ be the arclength of $\widetilde{P}$ from $a$ to $b$. Define a function $\Gamma_{a,b}(r) = \gamma(a,r)/ \gamma(a,b)$ for $r \in [a, b]$, which measures how far $r$ is along the path from $a$ to $b$ on $\widetilde P$. Notice that $\Gamma_{a,b}(r) \in [0,1]$.

Let $I'\subset I$ denote the points in $I$ which are mapped into non-intersecting segments. Given a point $t \in I'$, if $j$ is the unique index such that $t_{j-1}^\pm\leq t\leq t_{j}^\pm$, define a function $\theta\colon I'\to [\frac{j-1}{n-1}, \frac{j}{n-1}]$ as
\[
\theta(t) = \frac{j-1+ \Gamma_{e_{j-1}^\pm, e_j^\pm}(f(t))}{n-1},
\] 
which we will use to define the rotation of the nonintersecting segments. We now define the map $\varphi \colon I\to \mathbb{R}^3$ as follows:
\[ 
\varphi(t) = \begin{cases}
(\frac{f_x(t)}{\cos(j/(n-1))}, j/(n-1), f_z(t)) & \text{if  $t \in[t_j^\pm$, $t_j^\mp]$, \, $j = 0, 1, \dots, n-1$,} \\
(\frac{f_x(t)}{\cos(\theta(t))}, \theta(t), f_z(t))	& \text{if $t \in [t_j^\pm, t_{j+1}^\pm]$, \, $j=0,\dots,n-2$,} \\
(\frac{f_x(t)}{\cos(1-\Gamma_{f(t)}(q,\tilde{p}))},1-\Gamma_{q,\tilde p}(f(t)) , z)& \mbox{if } t \in [t_q, 1]
\end{cases}
\]
(in cylindrical coordinates $(r,\theta, z)$). Since the standard projection of $(\frac{x}{\cos\psi}, \psi, z)$ to the $xz$-plane is precisely $(x,z)$, we see that the projection of $\varphi(I)$ to the $xz$-plane is precisely $\widetilde P$, with possibly some crossing changes. Consider a crossing point $f(\tau_1)=f(\tau_2)$, $\tau_1<\tau_2$, labeled $o$ in the $xz$-plane. Since the $y$-coordinate of $\varphi(\tau_1)$ is greater than that of $\varphi(\tau_2)$ by construction, the projection does not change the crossing. Crossings labeled $u$ are unchanged by $\varphi$ and projection for the same reason, so in fact no crossings are changed.

Moreover, the projection of $\varphi(I)$ to the $xy$-plane is injective save at the origin. Let $m_1$ and $m_2$ be two points in $\varphi(I)$. If $m_1$ and $m_2$ correspond to the same intersecting segment then $\varphi(m_1)$ and $\varphi(m_2)$ are projected to points different distances from the origin. Otherwise, the only case in which $m_1$ and $m_2$ do not make different angles with the origin is if at least one corresponds to the final segment. However, it is easy to see that the line in the $xy$-plane corresponding to the final segment remains disjoint from the rest of $\varphi(I)$ under projection because we chose the final segment to be rightmost in the $xz$-plane. The projections of the images under $\varphi$ of intersecting segments are straight lines through the origin, so they intersect in an \"ubercrossing at the origin in the $xy$-plane.
\end{proof}

An example of this method applied to the trefoil knot appears in Figure \ref{fig:algorithm}.

\begin{figure}[h]
	\begin{subfigure}[b]{.2\textwidth}
		\centering
		{\includegraphics[height=30mm]{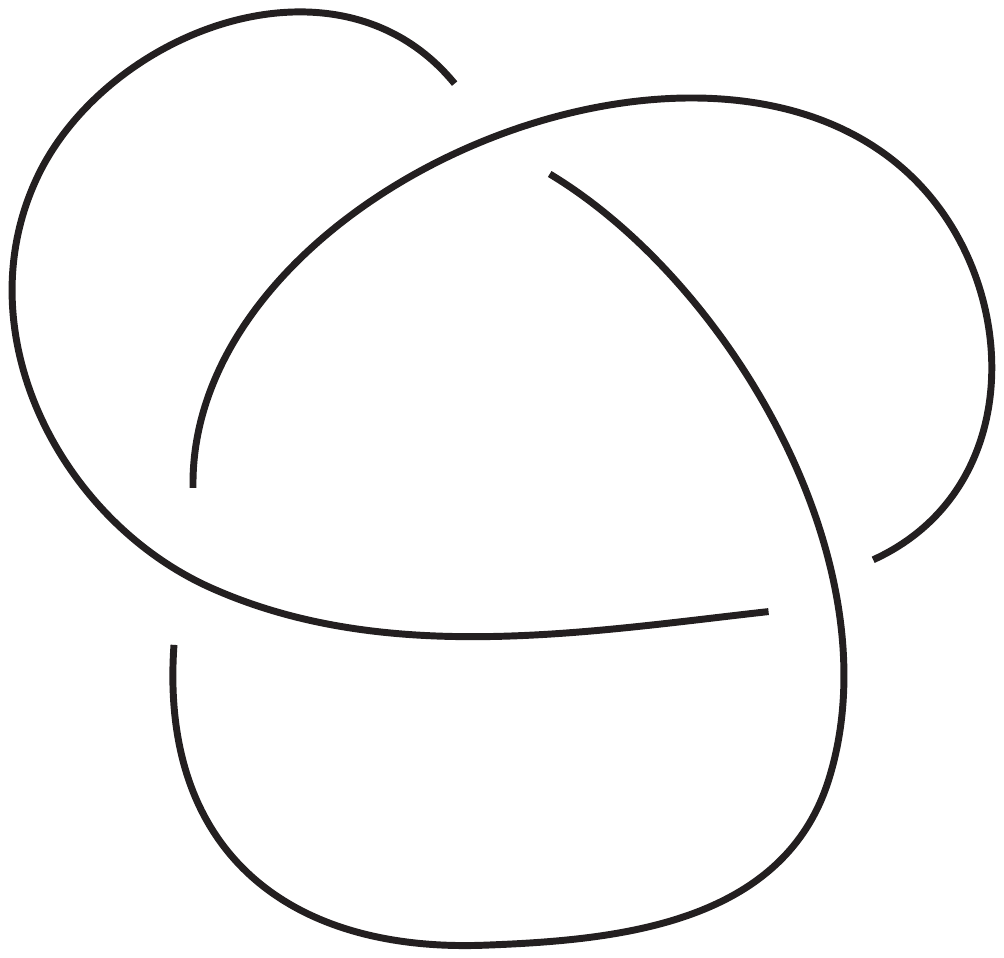}}
		\label{fig:algorithm1}
		\caption{}
	\end{subfigure}
	\begin{subfigure}[b]{.2\textwidth}
		\centering
		{\includegraphics[height=30mm]{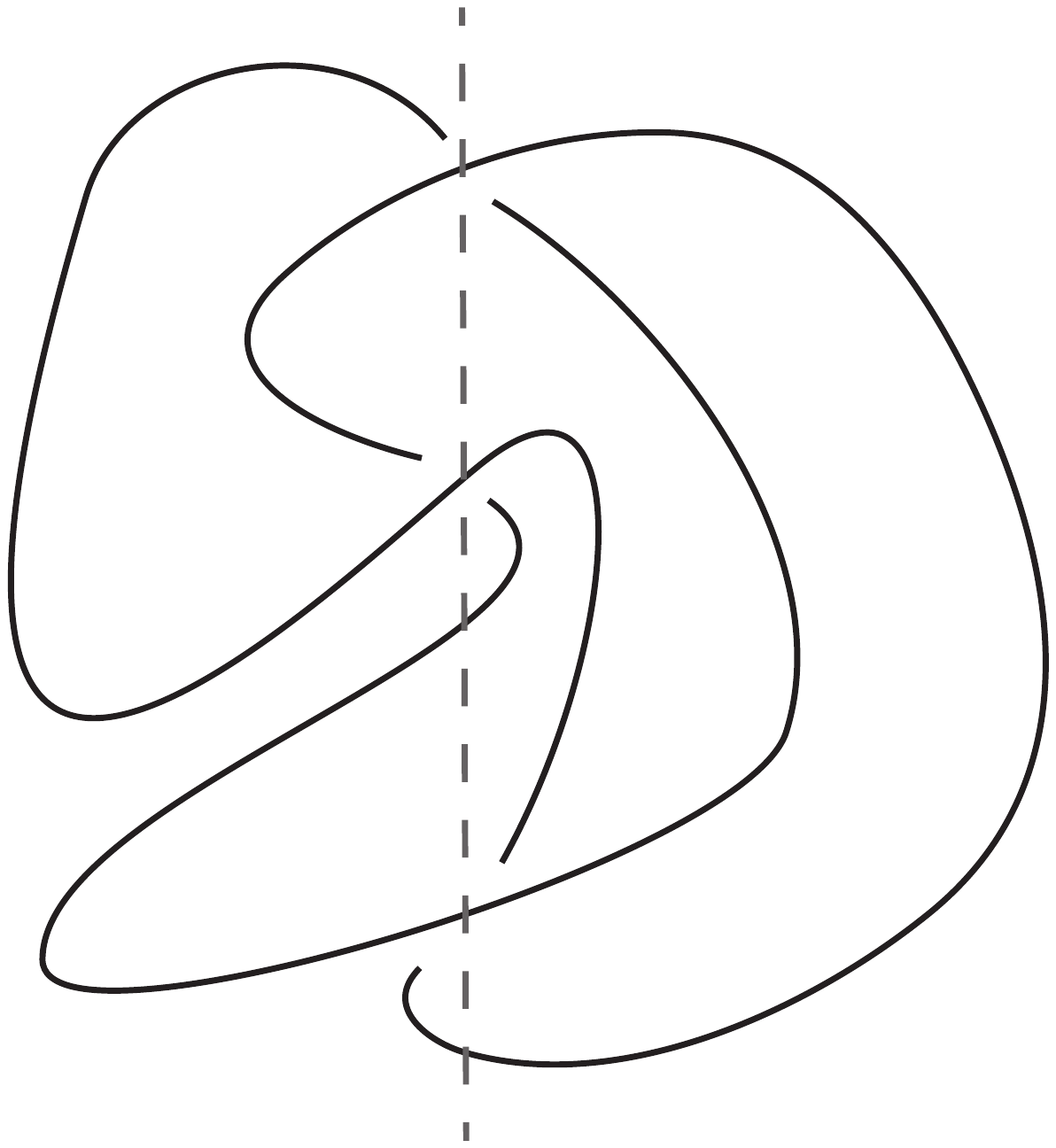}}
		\label{fig:algorithm2}
		\caption{}
	\end{subfigure}
	\begin{subfigure}[b]{.2\textwidth}
		\centering
		{\includegraphics[height=30mm]{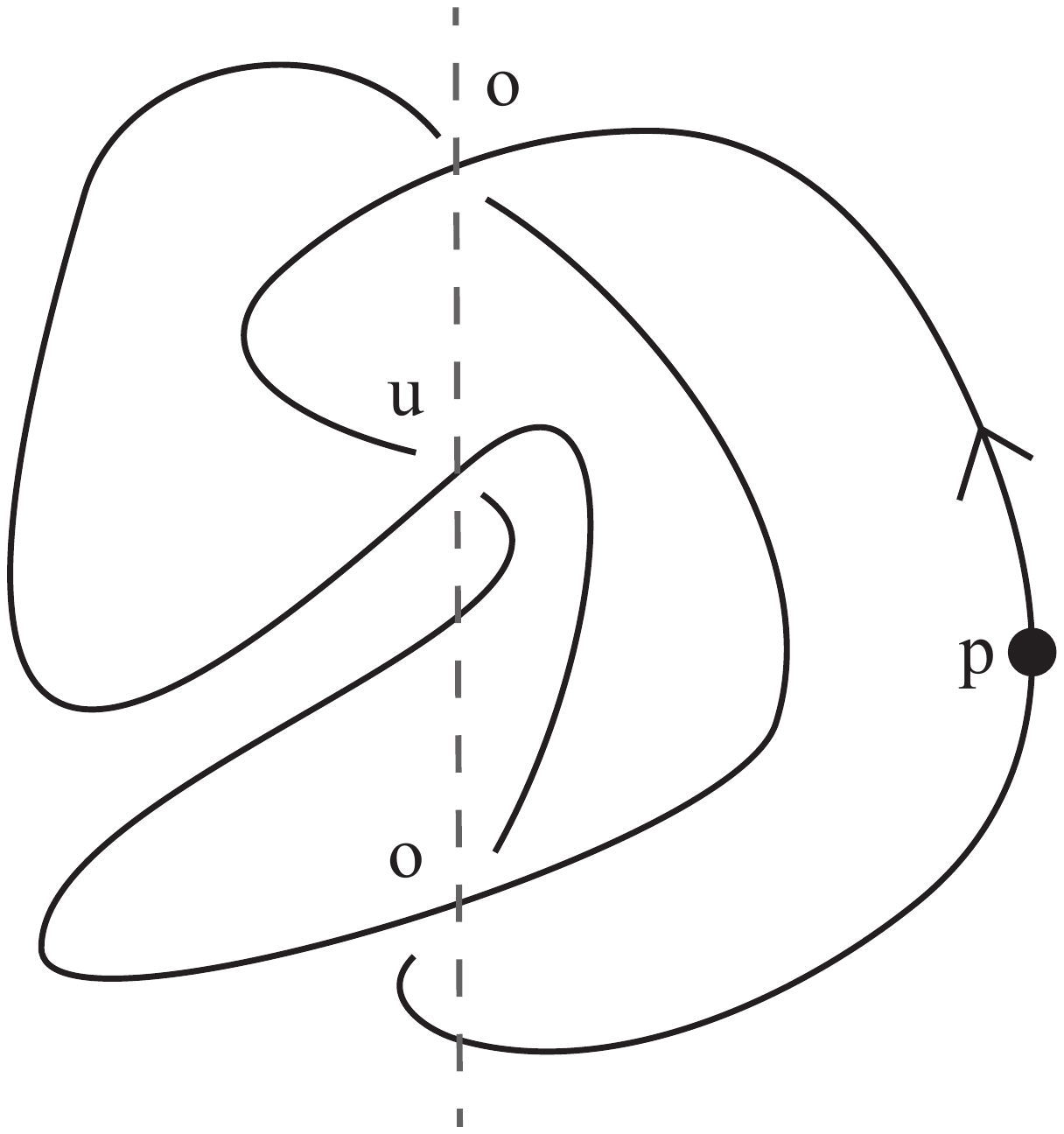}}
		\label{fig:algorithm3}
		\caption{}
	\end{subfigure}
\vspace{3mm}
	
	\begin{subfigure}[b]{.2\textwidth}
		\centering
		{\includegraphics[height=30mm]{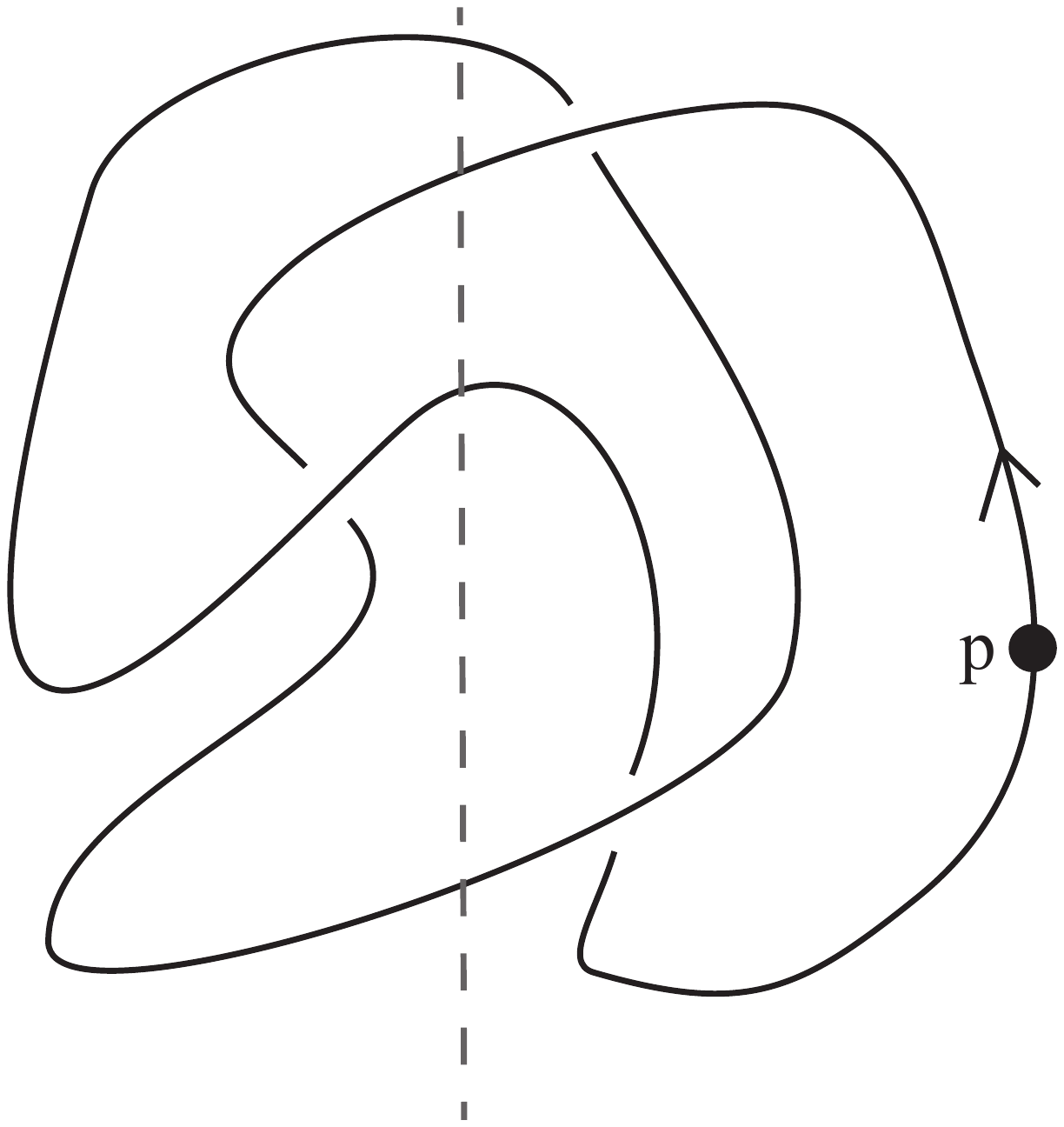}}
		\caption{}
		\label{fig:algorithm3b}
	\end{subfigure}
	\begin{subfigure}[b]{.2\textwidth}
		\centering
		{\includegraphics[height=30mm]{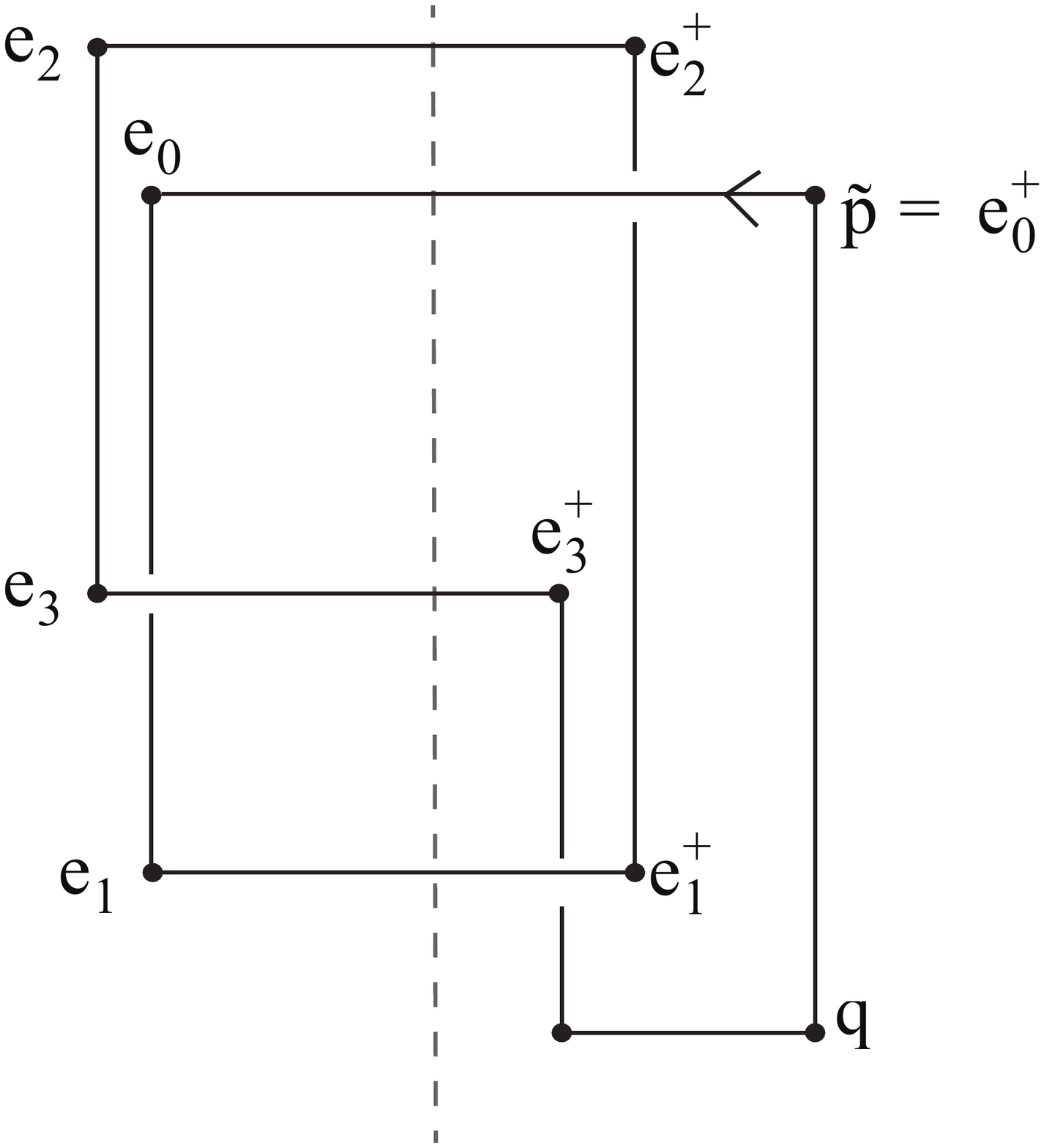}}
		\caption{}
		\label{fig:algorithm4}
	\end{subfigure}
	\begin{subfigure}[b]{.2\textwidth}
		\centering
		{\includegraphics[height=30mm]{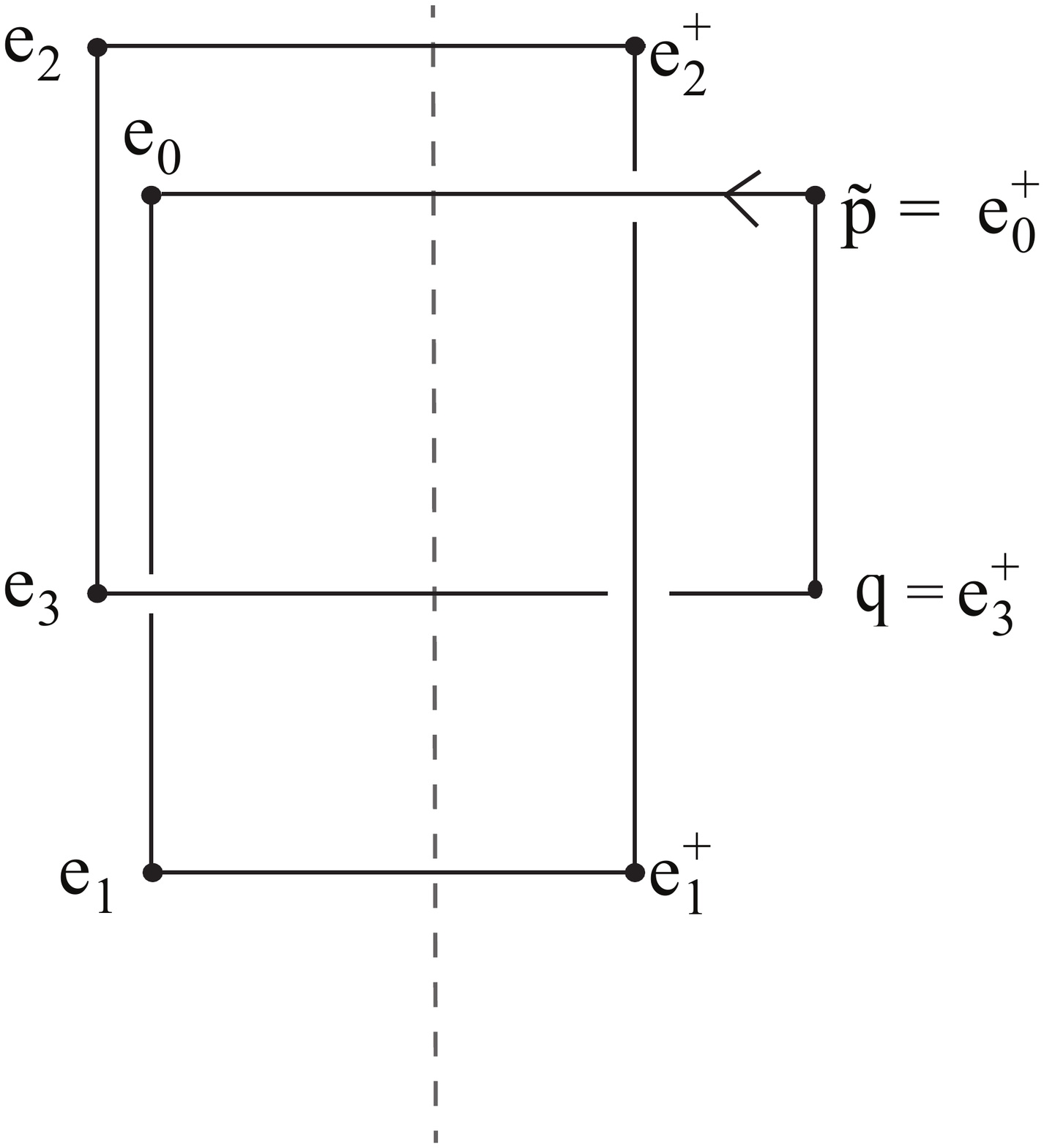}}
		\caption{}
		\label{fig:algorithm4b}
	\end{subfigure}
\vspace{3mm}
	
	\begin{subfigure}[b]{.2\textwidth}
		\centering
		{\includegraphics[height=40mm]{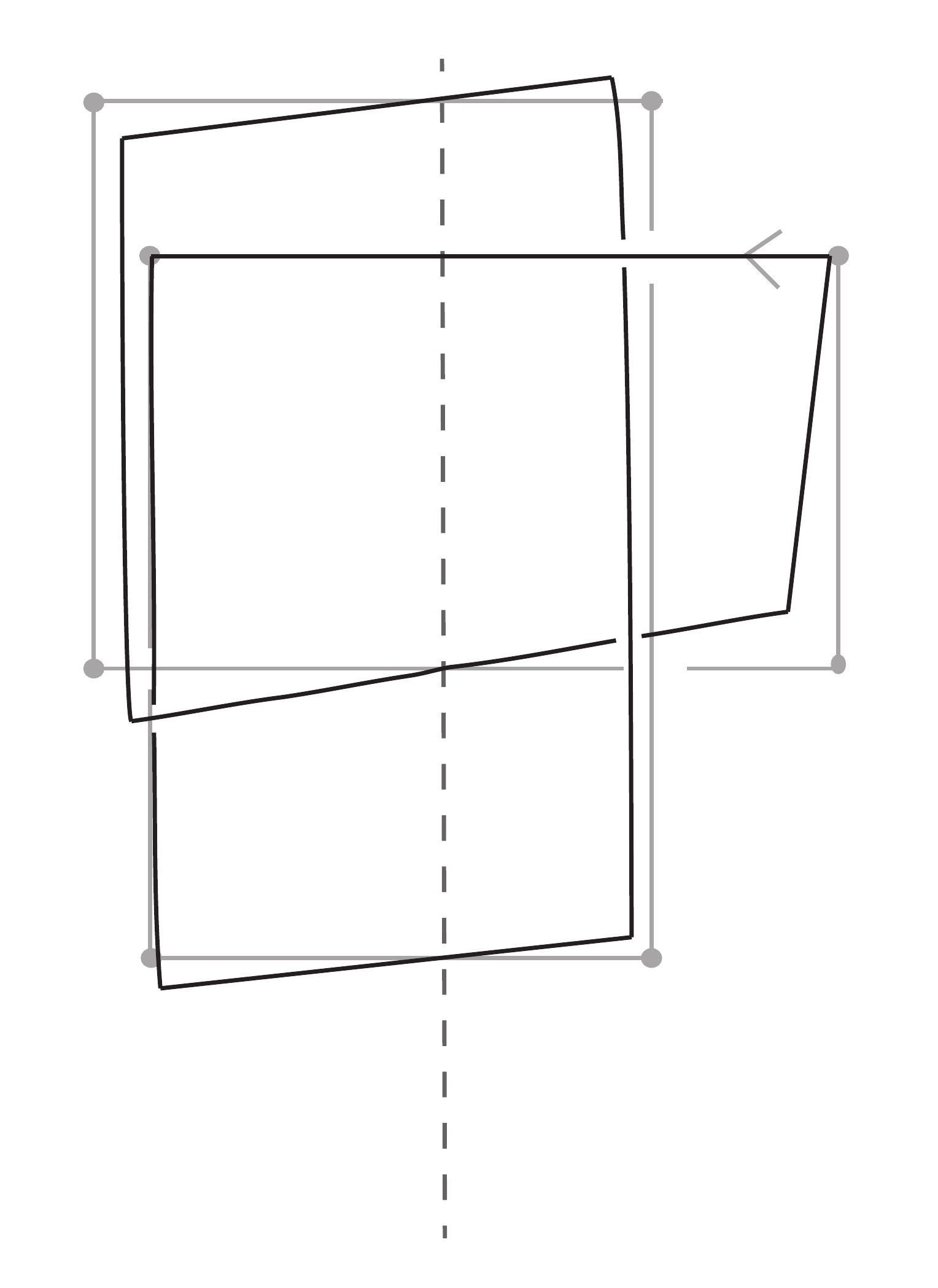}}
		\caption{}
		\label{fig:algorithm4c}
	\end{subfigure}
	\begin{subfigure}[b]{.2\textwidth}
		\centering
		{\includegraphics[height=40mm]{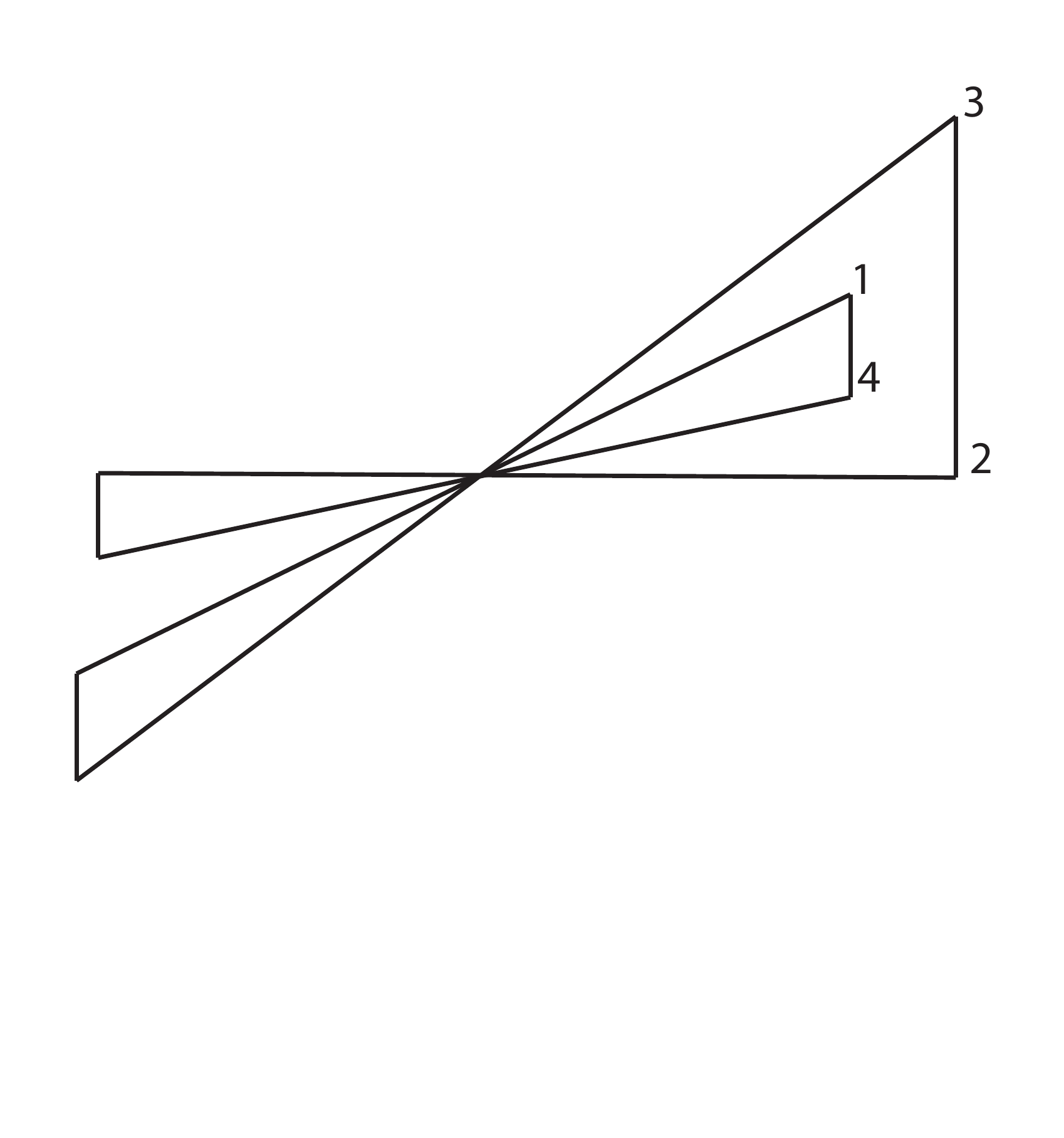}}
		\caption{}
		\label{fig:algorithm5b}
	\end{subfigure}

\caption{(a) We begin with the trefoil.  (b) A projection of the trefoil so that all crossings are on a straight line, indicated by the 
dashed line.  (c) The point $p$ 
and an orientation are selected, and the crossings are labelled $o$ or $u$.  (d)  After planar isotopy so that $o$-crossings are right of the axis, 
$u$-crossings are left of the axis. (e) Rectilinearizing and labeling the endpoints of the intersecting segments and the final segment. 
(f) Straightening the arcs from $\tilde{p}$ and $q$ to $A$.  (g) Rotating intersecting arcs and connecting their endpoints to obtain the conformation in $\R^3$. (h) Looking down $A$ yields the  \"{u}bercrossing projection.  }
\label{fig:algorithm}
\end{figure}


\begin{cor}\label{cor:uberlinks} Every link $L$ has an \"{u}bercrossing projection.
\end{cor}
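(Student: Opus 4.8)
The plan is to reduce the link case to the knot case already established in Theorem~\ref{thm:uberalgorithm}, rather than re-running the entire algorithm. Given an $m$-component link $L = L_1 \cup \dots \cup L_m$, I would first observe that the algorithm of Steps 1--5 applies essentially verbatim to each component once we have arranged all crossings of the whole link diagram along a single vertical axis $A$; the only genuinely new issue is that the components must be fitted together consistently in $\mathbb{R}^3$ so that projecting down $A$ yields a \emph{single} \"ubercrossing through which every strand of every component passes.

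First I would isotope a projection of $L$ so that all crossings (self-crossings of each $L_i$ and inter-component crossings alike) lie on the vertical line $A$, and carry out the analogues of Steps~1--3 for the union, obtaining a rectilinear diagram $\widetilde P$ in which all intersecting segments cross $A$. Here each component contributes its own collection of intersecting and non-intersecting segments. Let $n$ be the \emph{total} number of intersecting segments over all components. Then I would apply the map $\varphi$ of the proof of Theorem~\ref{thm:uberalgorithm}, but with the angles assigned so that the intersecting segments of \emph{all} components receive distinct rotation angles in $[0,1]$ (radians, as in the knot proof): concatenate the components in some fixed order, assign the $i$th intersecting segment overall the angle $i/(n-1)$, and connect endpoints within each component by the same arclength-monotone rotation of non-intersecting segments as before. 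Because the rotation angles are determined by the \emph{global} index $i$, the strands coming from different components still all pass through the origin when projected down $A$, so they share one \"ubercrossing; and the order of the angles along the $y$-axis still respects all the $o$/$u$ labels (inter-component crossings are labeled and handled exactly like self-crossings), so the link type is unchanged.

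The one subtlety is the closing-up step: in the knot proof, Step~5 joins $q$ back to $\tilde p$ by a straight segment that is arranged to be rightmost in the $xz$-plane and hence disjoint from the rest of the diagram under the $xy$-projection. For a link I would instead close up each component $L_i$ separately, and I need these $m$ closing segments to be pairwise disjoint in $\mathbb{R}^3$ and to project to $m$ rays that do not create spurious crossings or pass through the \"ubercrossing badly. This is arranged by choosing, before rectilinearizing, $m$ distinct ``rightmost'' vertical segments (one per component, at slightly different heights and $x$-coordinates, all to the right of every crossing), designating their endpoints as the $\tilde p_i, q_i$, and giving each component's final segment its own narrow angular wedge near angle $1$; since these wedges can be made disjoint from each other and from the images of all intersecting segments, the closing segments project to disjoint rays and introduce no new crossings.

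I expect the main obstacle to be bookkeeping rather than anything conceptually deep: verifying that with the global angle assignment the $y$-coordinates still strictly order the two strands at every crossing (including inter-component ones) in accordance with its $o$/$u$ label, and that the $m$ separate closing arcs can be simultaneously realized without intersecting one another or the rest of the conformation. Both are handled exactly as the corresponding single-component claims in the proof of Theorem~\ref{thm:uberalgorithm}, just indexed over components; no new estimates are needed. Alternatively, and perhaps more cleanly, one could phrase the argument as: apply Theorem~\ref{thm:uberalgorithm}'s construction to a diagram of $L$ regarded as a ``knot with several arcs,'' noting that the proof nowhere used connectedness of the underlying curve except in Step~5, and then patch Step~5 as above. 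Either way the corollary follows.
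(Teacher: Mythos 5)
Your proposal takes essentially the same route as the paper: run the algorithm of Theorem~\ref{thm:uberalgorithm} on all components at once, labeling crossings $o$/$u$ by traversing the components in a fixed order, assigning the rotation angles consecutively across the components' intersecting segments, and closing up each component along its own rightmost vertical segment. The only cosmetic difference is your suggestion to confine the closing arcs to narrow wedges near angle $1$ — in the paper each component's closing arc instead sweeps back through that component's own angular sector (which is where it must go, since it joins $q_i$ to $\tilde p_i$), and disjointness follows because the components occupy disjoint angular ranges; this does not affect the correctness of your argument.
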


\begin{proof} Label each link component $L_1, L_2, \dots, L_\ell$.  Pick an orientation on each link component, and a base point 
$p_i$ which is rightmost in the $i$-th component.  Beginning at $p_1$ and following the orientation of $L_1$, label each crossing $o$ or $u$ as above.  
Continue with $L_2$, and so on.  Isotope the $o$-crossings to the right and the $u$-crossings to the left of a line $A$, then
rectilinearize each component as above.  Label $q_i$ and $\tilde{p_i}$ on each component as above, so 
that $q_i$ and $\tilde{p_i}$ bound the rightmost vertical segment of $L_i$.  Notice that these final segments may have crossings, but 
do not cross any segments from their own component.

Label the endpoints of the intersecting segments $e_j^\pm$ as above, beginning with $L_1$, then $L_2$, and so on.  Note that this labeling 
is consecutive, in the sense that if $e_1^\pm, \dots, e_k^\pm$ are in $L_1$, then $e_{k+1}^\pm$ is the first endpoint pair of $L_2$.  
We can then perform the algorithm on each link component. Projecting down the axis of rotation gives an \"{u}bercrossing projection.
\end{proof}

Note that the \"{u}bercrossing projection of a link obtained from this algorithm looks like \"{u}bercrossing projections of several knots of the form shown in Figure \ref{fig:corpetal}(a) offset and stacked on top of each other. 

Theorem \ref{thm:uberalgorithm} is now extended to show that every knot has a petal projection 
via a simple isotopy that turns the \"{u}bercrossing projection 
obtained from the above theorem into a petal projection.    
It is easy to verify that a petal projection of a nontrivial knot must have an odd number of loops. Notice, however, that these results do not apply to links. In general, links do not have petal projections.  

\begin{cor} \label{cor:petal} Every knot $K$ has a petal projection.
\end{cor}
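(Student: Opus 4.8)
The plan is to start from the übercrossing projection $Q$ of $K$ produced by Theorem \ref{thm:uberalgorithm} and remove nesting loops one at a time by an isotopy of $S^2$ (equivalently, of the plane together with its point at infinity) that drags offending loops around the übercrossing without changing the knot type or introducing new crossings. First I would set up the right picture: $Q$ consists of an übercrossing $v$ at the origin together with a cyclic sequence of loops $\ell_1,\dots,\ell_k$ emanating from $v$, where the cyclic order is the order in which the strands leave the crossing. A loop $\ell_i$ is a \emph{nesting loop} if some other loop lies in the disk it bounds. The goal is to isotope $Q$ so that the bounded complementary regions of the loops are pairwise disjoint except at $v$.

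The key observation is that one has freedom in how the arcs of $Q$ leave the übercrossing: near $v$, the $2k$ strand-ends occur in some cyclic order, and the pairing of these ends into loops is exactly the data that determines which loops nest. The plan is to argue that by an isotopy supported near $v$ we may permute the cyclic positions of the strand-ends (this is legitimate because at an übercrossing the strands pass straight through, so the levels are recorded by a permutation and the planar cyclic order of the ends is not a knot invariant), and that some choice of cyclic order makes the loop system \emph{non-crossing} in the sense of a non-crossing partition: pair end $1$ with end $2$, end $3$ with end $4$, and so on. With that pairing the loops can be drawn as $k$ disjoint petals around $v$, none containing another, which is precisely a petal projection. Concretely I would proceed by induction on the number of nesting loops: pick an innermost nesting loop $\ell$, i.e. one of minimal area containing at least one other loop; since $\ell$ is innermost among nesting loops, everything inside it consists of non-nesting loops, and I can slide those interior loops out through a thin corridor alongside $\ell$ to the outside (using the point at infinity on $S^2$ to route them around), strictly decreasing the count of nesting loops while preserving the crossing pattern and the knot type, since the isotopy is an ambient isotopy of $S^2$.

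I expect the main obstacle to be verifying that the loop-extraction isotopy can always be carried out \emph{without creating a new $n$-crossing or changing $n$}: when we drag an interior loop out past $\ell$ we must route it through the übercrossing region, and we have to check that this can be done so that the strands still pass straight through $v$ and that no two loops end up forced to nest for a different reason. The careful way to handle this is to phrase everything in terms of the cyclic sequence of strand-ends at $v$ and the associated chord diagram on a circle around $v$: a petal projection corresponds exactly to the standard non-crossing perfect matching, and every perfect matching of $2k$ points on a circle can be transformed into the standard non-crossing one by a sequence of moves each of which swaps two adjacent chord-endpoints — and each such swap is realized by a local isotopy near $v$ that, crucially, does not alter the knot because the over/under information at $v$ is carried by the level permutation rather than by the planar embedding of the loops. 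I would close the argument by noting that this process terminates (the number of crossings among loops strictly decreases) and yields a projection with a single übercrossing and no nesting loops, i.e. a petal projection of $K$.
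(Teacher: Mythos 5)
There is a genuine gap here, and in fact the strategy as stated cannot succeed. Your procedure aims to convert an \"{u}bercrossing projection with $n$ loops into a petal projection with the same $n$ loops, keeping the multiplicity of the single crossing fixed. Since the number of loops equals the multiplicity, success would give $p(K)\le \ub(K)$ for every knot; but unfolding the top petal of any petal projection yields an \"{u}bercrossing projection with one fewer loop, so $\ub(K)<p(K)$ always (the trefoil has $\ub\le 4$ but $p=5$). So some step must fail, and the failures are concrete. First, a nesting loop $\ell$ is a simple closed curve through the crossing point $v$ and separates $S^2$ into two disks; a loop in its interior meets $\ell$ only at $v$, so to extract it you must pass it through $v$ itself, which necessarily raises the multiplicity --- there is no ``corridor alongside $\ell$'' that avoids this. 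Re-choosing the point at infinity merely relabels which disk counts as the inside and already fails to kill all nesting for a depth-three chain $\ell_1\supset\ell_2\supset\ell_3$. Second, the claimed swap of two adjacent strand-ends near $v$ is not a legal move on \"{u}bercrossing projections: interchanging two ends in the plane forces one strand over the other, creating a new double crossing near $v$, and absorbing that crossing into $v$ leaves a strand whose two ends are no longer antipodal, violating the requirement that each strand bisect the crossing. The cyclic slot at which each loop attaches to $v$ is genuine combinatorial data, not something carried by the level permutation alone.

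The paper sidesteps all of this by exploiting the special shape of the projection produced by Theorem \ref{thm:uberalgorithm}: that projection has exactly one nesting loop (coming from the final segment joining $q$ to $\tilde p$) with all other loops innermost. That single nesting loop is folded over the \"{u}bercrossing, converting it into two non-nesting petals at the cost of raising the multiplicity from $n$ to $n+1$. If you want to salvage your more general ``remove the nesting'' viewpoint, the move you need is exactly this fold --- which changes $n$ --- rather than an $n$-preserving rearrangement of the chord diagram.
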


\begin{proof} The algorithm from the proof of Theorem \ref{thm:uberalgorithm} yields an \"{u}bercrossing knot with $n$ loops, where $n$ is the 
number of intersecting strands in the projection $\widetilde{P}$.  This number is always even.  Further, this \"{u}bercrossing projection 
has $n-1$ innermost loops, and one loop that nests $\frac{n}{2} - 1$ loops, as in Figure \ref{fig:corpetal}(a).  This outermost loop can be folded 
over the \"{u}bercrossing, as in Figure \ref{fig:corpetal}\subref{fig:corpetal2}, to yield a petal projection with $n+1$ petals.
\end{proof}

\begin{figure}[h]
\centering
	\begin{subfigure}[b]{.2\textwidth}
		\centering
		\includegraphics[height=30mm]{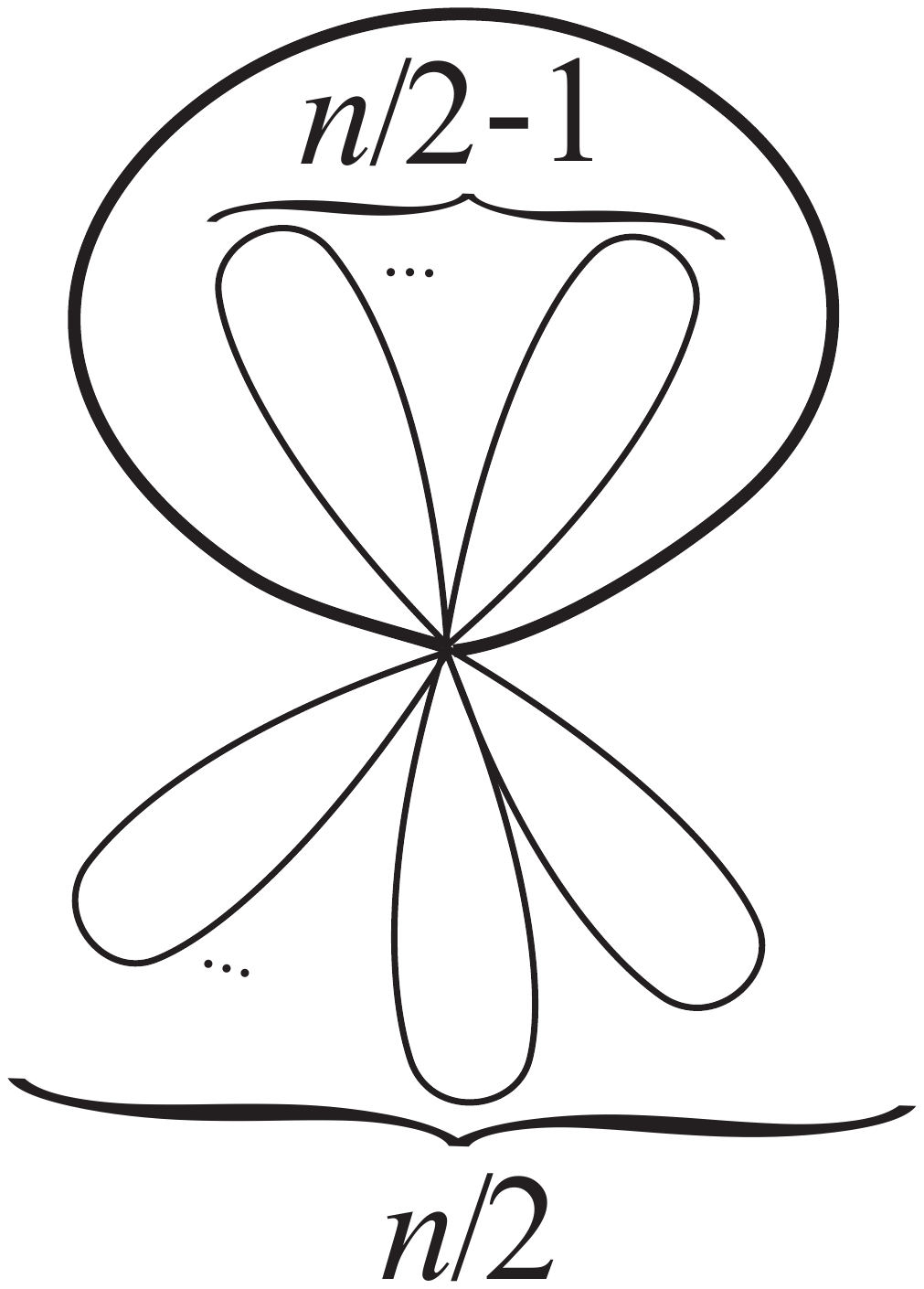}
		\label{fig:corpetal1}\caption{}
	\end{subfigure}
	\begin{subfigure}[b]{.2\textwidth}
		\centering
		\includegraphics[height=30mm]{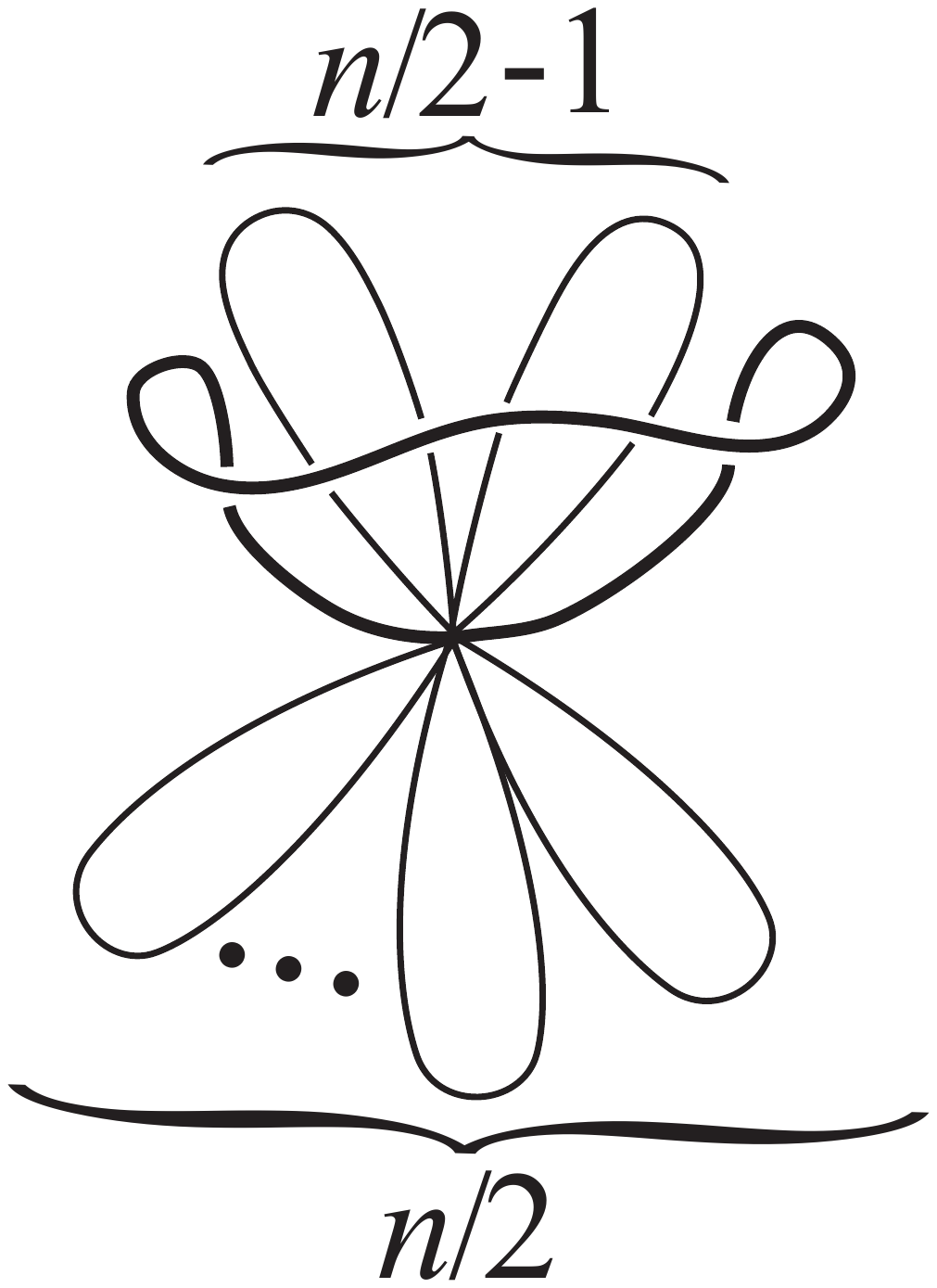} 
		\label{fig:corpetal1b}\caption{}
	\end{subfigure}
	\begin{subfigure}[b]{.2\textwidth}
		\centering
		\includegraphics[height=30mm]{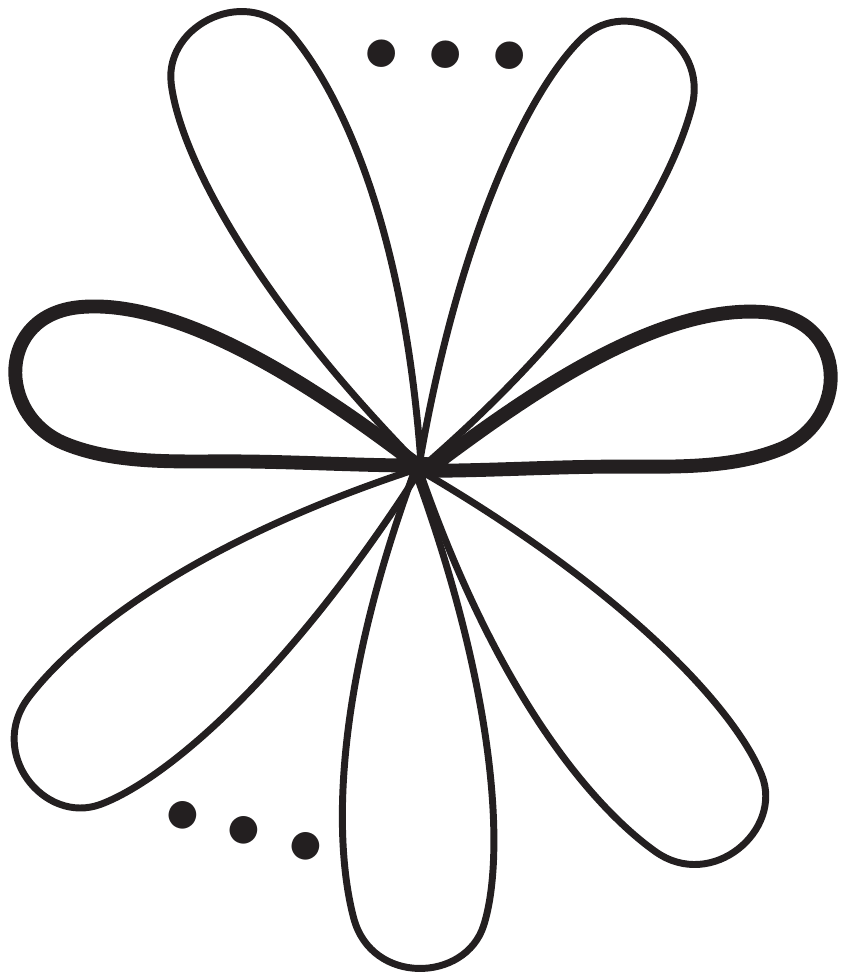}
		\label{fig:corpetal2}\caption{}
	\end{subfigure}
\caption{Turning a pre-petal projection obtained from Theorem \ref{thm:uberalgorithm} into a petal projection by folding the nesting loop over the middle of the \"{u}bercrossing projection.}
\label{fig:corpetal}
\end{figure}
\qquad

Note that the reverse of this isotopy shows that $\ub(K)<p(K)$.  Given a petal projection with $p$ loops, we can move to an \"ubercrossing projection with one less strand by pulling off the top strand so 
that it is a nesting loop containing $\frac{p(K)-1}{2}$ loops. Hereafter we shall call such a projection a \emph{pre-petal projection}.

The next theorem examines how petal number behaves under composition.

\begin{thm}\label{compostionthm}
If $K_1$ and $K_2$ are knots, then
\[
p(K_1\#K_2)\leq p(K_1)+p(K_2)-1.
\]
\end{thm}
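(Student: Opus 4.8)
The plan is to construct a petal projection of $K_1\#K_2$ directly by \emph{splicing together} minimal petal projections of the two factors along a single petal each. Begin with oriented petal projections $P_1$ of $K_1$ and $P_2$ of $K_2$ having $p_1:=p(K_1)$ and $p_2:=p(K_2)$ loops, and fix one loop $\ell_i$ of $P_i$. Delete a small arc at the tip of $\ell_1$ and a small arc at the tip of $\ell_2$, shrink $P_2$ so that it is very small, insert the shrunken $P_2$ into the gap at the tip of $\ell_1$, and reglue the four free endpoints so that orientations match and the result is a single oriented circle. Since $K_1$ with a trivial arc removed, joined to $K_2$ with a trivial arc removed, is by definition $K_1\#K_2$, this planar picture is a diagram of $K_1\#K_2$. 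At this stage it has two multi-crossings, at the centers of $P_1$ and $P_2$.

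Next I would isotope this diagram to fuse the two multi-crossings into one. First slide the small flower $P_2$ along the broken-and-respliced loop $\ell_1$ until the center of $P_2$ sits right next to the center of $P_1$, keeping $P_2$ inside the angular wedge formerly occupied by $\ell_1$ so that it is not enclosed by, and does not enclose, any loop of $P_1$. Then ``absorb'' the $p_2$ strands of $P_2$'s multi-crossing into $P_1$'s: enlarge $P_1$'s crossing so that the strands inherited from $P_2$ occupy a contiguous block of $p_2$ consecutive levels lying just above the levels of $P_1$, with the old loop $\ell_1$ becoming exactly the single strand straddling the two blocks. One then checks that every loop of the resulting projection is either a loop of $P_1\setminus\ell_1$, a loop of $P_2\setminus\ell_2$, or the one new loop assembled from the spliced remnants of $\ell_1$ and $\ell_2$; hence there are $(p_1-1)+(p_2-1)+1=p_1+p_2-1$ loops, none nests another, and the unique crossing genuinely bisects all $p_1+p_2-1$ strands. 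This exhibits a petal projection of $K_1\#K_2$ with $p_1+p_2-1$ loops and yields the bound.

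The fusion can be made rigorous on the level of permutations. Writing the level data of $P_1$ as a $p_1$-cycle $(a_1,\dots,a_{p_1})$ and of $P_2$ as a $p_2$-cycle $(b_1,\dots,b_{p_2})$, cyclically relabel so that $a_{p_1}=p_1$ (the last center-passage of $K_1$ is at its top level) and $b_1=1$ (the first center-passage of $K_2$ is at its bottom level). Then the merged sequence $(a_1,\dots,a_{p_1-1},\,p_1,\,b_2+p_1-1,\dots,b_{p_2}+p_1-1)$ is a permutation of $\{1,\dots,p_1+p_2-1\}$ and a single cycle of that length, so it describes a petal projection of a knot with $p_1+p_2-1$ petals; its block structure --- levels $1,\dots,p_1$ carrying $K_1$, levels $p_1,\dots,p_1+p_2-1$ carrying $K_2$, glued along the shared level $p_1$ --- is exactly that of a connected sum, so the knot is $K_1\#K_2$ (choosing the orientation convention, or the mirror of $P_2$, so as to get $K_2$ rather than its reverse). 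As a sanity check, when $K_1,K_2$ are nontrivial $p_1,p_2$ are odd, so $p_1+p_2-1$ is odd, as a petal projection of the nontrivial knot $K_1\#K_2$ must be; and if either factor is trivial the inequality is immediate.

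The main obstacle is precisely this fusion step: one must verify that the two multi-crossings really can be combined into a single crossing that still bisects every strand and still has the no-nesting petal form, and that the combinatorics produces $K_1\#K_2$ and not a split or incorrectly summed picture. The two pieces of data that do the real work are the orientation matching in the initial splice and the placement of $P_2$'s levels as one contiguous block above $P_1$'s; once these are pinned down, the remainder is routine planar isotopy and loop counting.
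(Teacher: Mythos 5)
Your proof is correct, and it reaches the same final construction as the paper by a somewhat different route. The paper first converts each minimal petal projection into a \emph{pre-petal} projection by lifting the top strand off the \"ubercrossing, stacks the two pre-petal projections, composes them along their nesting loops to obtain a pre-petal projection with $p(K_1)+p(K_2)-2$ loops, and then folds the new nesting loop back over to get a petal projection with $p(K_1)+p(K_2)-1$ loops. You instead splice the two petal projections directly and encode the result as a concatenated level permutation. Your combinatorial formulation is arguably sharper than the paper's figure-based argument: it makes the loop count immediate, and the summing sphere is visible as a horizontal plane separating levels $\le p_1$ from levels $>p_1$, which meets the knot in exactly two points (one on each of the two petals joining the blocks), the tangle on each side closing up to $K_1$ and $K_2$ respectively because what was deleted from each factor (one petal of $P_1$; the bottom strand of $P_2$ together with its adjacent petal) is an unknotted arc. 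Two small repairs are needed. First, in the geometric version you cannot keep all of $P_2$ inside the angular wedge formerly occupied by $\ell_1$ after fusing the crossings: a strand whose two adjacent petals lie on the same side of a multi-crossing cannot bisect it, so the petals of $P_2$ must be fanned out around the merged crossing and interleaved with those of $P_1$ --- which is exactly what the standard realization of your concatenated permutation does automatically, so the permutation version is the one to keep as the rigorous argument. Second, a petal permutation determines a knot only up to chirality, so the adjustment available at the end is to replace $P_2$ by a minimal petal projection of the mirror image of $K_2$ (which has the same petal number), rather than of its reverse.
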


\begin{figure}
\centering
\includegraphics[height=30mm]{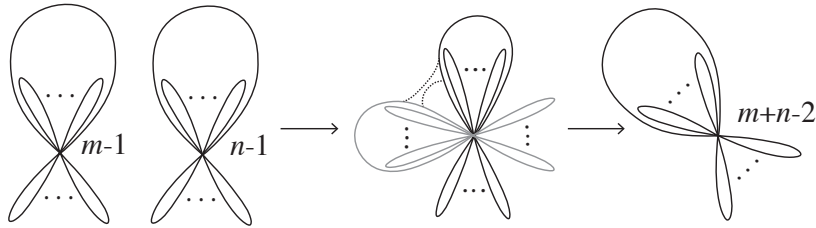}
\caption{Stacking pre-petal projections to bound petal numbers of compositions.}
\label{fig:composition}
\end{figure}

\begin{proof}
Let $m=p(K_1)$ and $n=p(K_2)$. Take petal projections of $K_1$ and $K_2$ which realize $m$ and $n$, respectively. Lifting the top strands off of the \"ubercrossing gives a pre-petal projection of each knot, and we may stack these on top of each other as shown in Figure \ref{fig:composition}. We then compose the two knots along their nesting loops as shown, forming a pre-petal projection $m+n-2$ loops, which gives rise to a petal projection of $K_1\#K_2$ with $m+n-1$ loops.
\end{proof}

\vspace{5mm}


\section{invariants}


In this section, we relate $\ub(K)$ and $p(K)$ to stick number, braid index, and arc index.

\begin{thm}
Let $s(K)$ be the stick number of a knot $K$. Then the following inequality holds:
\[
s(K)\leq \frac{3p(K)-1}{2}.
\]
\end{thm}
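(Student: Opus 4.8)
Proof proposal for $s(K)\leq \frac{3p(K)-1}{2}$.

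The plan is to take a petal projection of $K$ realizing $p(K)=p$ and turn it directly into a stick (piecewise-linear) conformation of $K$, counting sticks carefully. Write $p=2k+1$ (recall a nontrivial knot has an odd number of loops; the unknot satisfies $s=3$ and $p=3$, so the bound holds trivially there). A petal projection consists of $p$ loops emanating from a single \"ubercrossing, with a permutation of $\{1,\dots,p\}$ recording the heights at which the $p$ strands pass through the crossing. I would place the center of the \"ubercrossing at the origin, arrange the $2p$ ``spokes'' (the two radial ends of each petal) as straight segments going out from near the $z$-axis at the appropriate heights, and close off each petal at its far end. The key economy is that the two radial sticks of a single petal, together with the arc closing that petal, can be made from very few segments, and consecutive spokes around the crossing can sometimes be amalgamated.

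The main step is the explicit stick count. Each of the $p$ petals contributes: near the crossing, one must connect the incoming strand at its prescribed height to the outgoing strand at its (different) prescribed height while staying near the axis — this is where the permutation forces some vertical ``jogs''. Going out and coming back along the petal and closing it at the far end can be done with a bounded number of sticks per petal. I would aim to show that on average each petal costs $3/2$ sticks (i.e. $3$ sticks per two petals), by pairing up petals and showing two adjacent petals can be realized with $3$ straight segments plus shared endpoints, giving roughly $\tfrac{3p}{2}$ sticks total, and then shave off the final stick using the ``free'' rightmost/final segment (the one with no crossings on it, analogous to the final segment in Theorem \ref{thm:uberalgorithm}) to get the exact bound $\frac{3p-1}{2}$.

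The hard part will be handling the heights inside the \"ubercrossing without spending extra sticks: because the strands pass through at $p$ distinct levels in an essentially arbitrary order, naively each strand needs its own vertical segment to reach its level, which would cost $\sim 2p$ sticks, not $\sim 3p/2$. The trick I expect to need is to route the strand so that a single straight stick serves double duty — simultaneously acting as the ``radial spoke'' in the projection and as the ``elevator'' that carries the strand from one level to the next — so that the in-spoke, out-spoke, and level change of a petal are absorbed into two or three sticks rather than four. Verifying that such a routing can always be achieved for an arbitrary height permutation, while keeping the projection a genuine petal projection and not introducing spurious crossings, is the crux; once the per-petal (or per-pair-of-petals) stick budget is nailed down, summing over the $p$ loops and accounting for the one shared/free segment yields $s(K)\leq\frac{3p(K)-1}{2}$.
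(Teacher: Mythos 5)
Your overall plan---convert a (pre-)petal projection into a polygonal conformation and show the amortized cost is $3/2$ sticks per loop---is the right target, but the proposal stops exactly where the proof has to begin. The claim that ``two adjacent petals can be realized with $3$ straight segments'' is asserted, not proved, and you yourself flag the verification of the routing for an arbitrary height permutation as ``the crux.'' As written there is no construction to check, so this is a gap, not a proof. Moreover, the specific mechanism you suggest is doubtful: petals that are adjacent \emph{in the planar projection} pass through the crossing at heights that are arbitrary with respect to each other (that is the whole content of the permutation), so there is no reason two such petals can share a segment. Any successful merging has to happen between loops that are consecutive \emph{along the knot}, and whether a merge is possible at a given loop depends on whether its height is a local maximum, local minimum, or neither relative to its neighbors in the traversal order. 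A loop sitting at a local minimum of the height function genuinely costs two sticks and cannot be compressed, so a uniform ``$3$ sticks per pair of petals'' budget is not achievable locally; the $3/2$ average only emerges from a global count.

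For comparison, the paper's proof makes this precise as follows: unfold the top strand to get a pre-petal diagram with $n-1$ loops ($n=p(K)$), realize it with $2(n-1)$ sticks ($n-1$ horizontal sticks at distinct levels on a vertical axis, joined by $n-1$ tilted sticks), and then classify the horizontal sticks as max-, min-, or inflection-horizontal. Each max- or inflection-horizontal stick can be amalgamated with an adjacent tilted stick (via an empty tetrahedron, resp.\ a small tubular-neighborhood argument), saving one stick each, except possibly at the two horizontal sticks of the nesting loop. Since the numbers of local maxima and minima are equal, at least $\frac{n-3}{2}$ sticks are always eliminable, giving $s(K)\leq 2(n-1)-\frac{n-3}{2}=\frac{3p(K)-1}{2}$. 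To repair your argument you would need to supply an analogue of this max/min/inflection counting together with an explicit, crossing-preserving amalgamation move; the ``free rightmost segment'' you invoke for the final $-\tfrac12$ is not where that term comes from.
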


\begin{proof}
$~$Let $p(K)=n$. By unfolding the top strand of the minimal petal projection of $K$, we can obtain a pre-petal diagram with $n-1$ loops (Figure~\ref{fig:petal}(a)). We can use this diagram to obtain a stick conformation of $K$ with $2(n-1)$ sticks (Figure~\ref{fig:petal}(b)). Assuming that the projection plane is the  $xy$-plane and the $z$-axis is vertical, exactly half of the sticks are horizontal. Looking along the line marked in Figure \ref{fig:petal}(b), we see a view of the stick conformation as shown in \ref{fig:petal}(c). We draw a vertical axis $A$ on this sideview of the stick conformation as shown in Figure ~\ref{fig:petal}(c), and
set the distance between level $k$ and $k+1$ of the axis be $1$ for all $1\leq k\leq n-2$. We then say that there are $n-1$ horizontal and $n-1$ tilted sticks. 

\begin{figure}[htb]
\centering
\begin{subfigure}[b]{.3\textwidth}
\centering
\includegraphics[height=35mm]{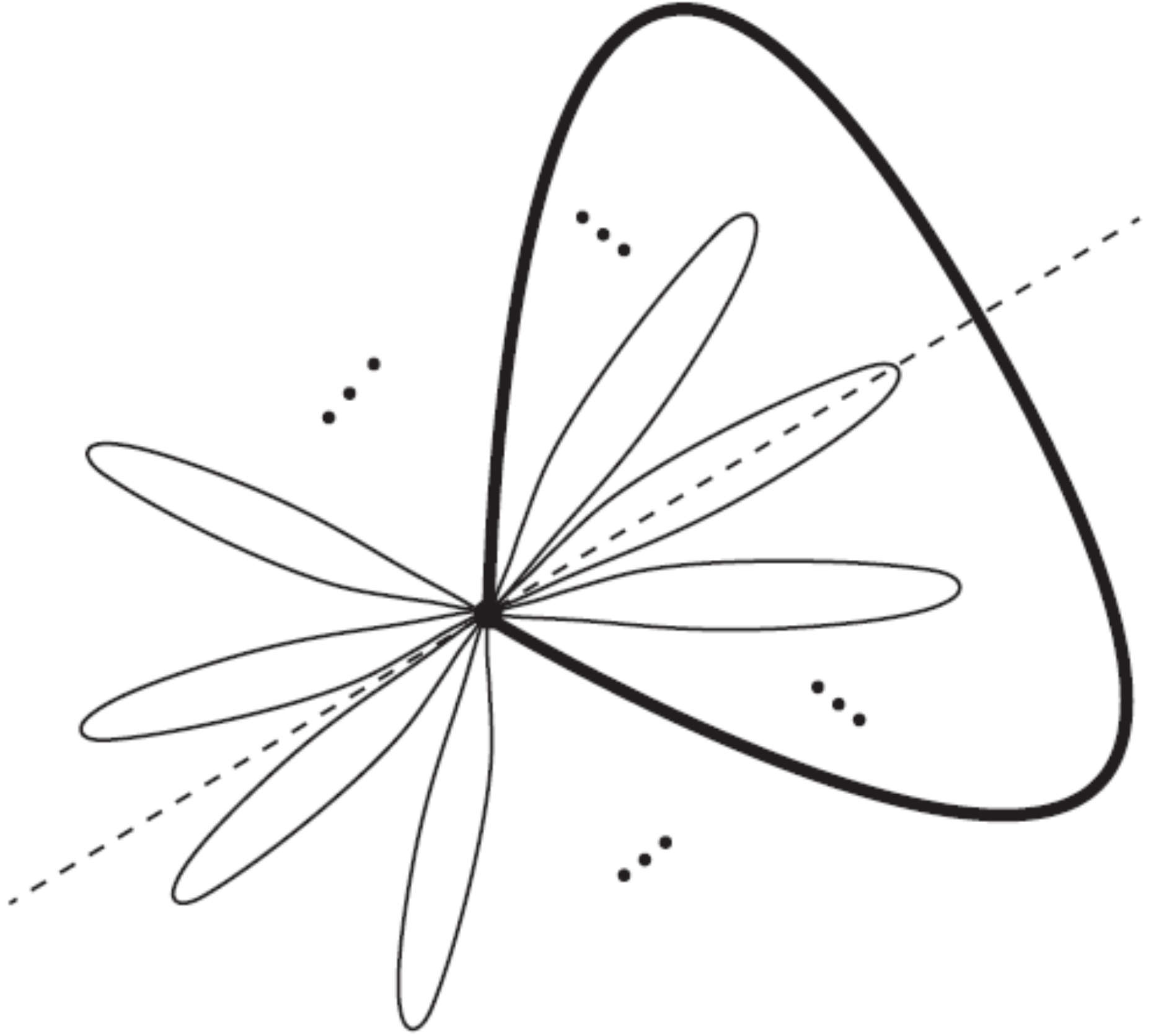}
\quad
\caption{}
\qquad
\end{subfigure}
\begin{subfigure}[b]{.3\textwidth}
\centering
\includegraphics[height=35mm]{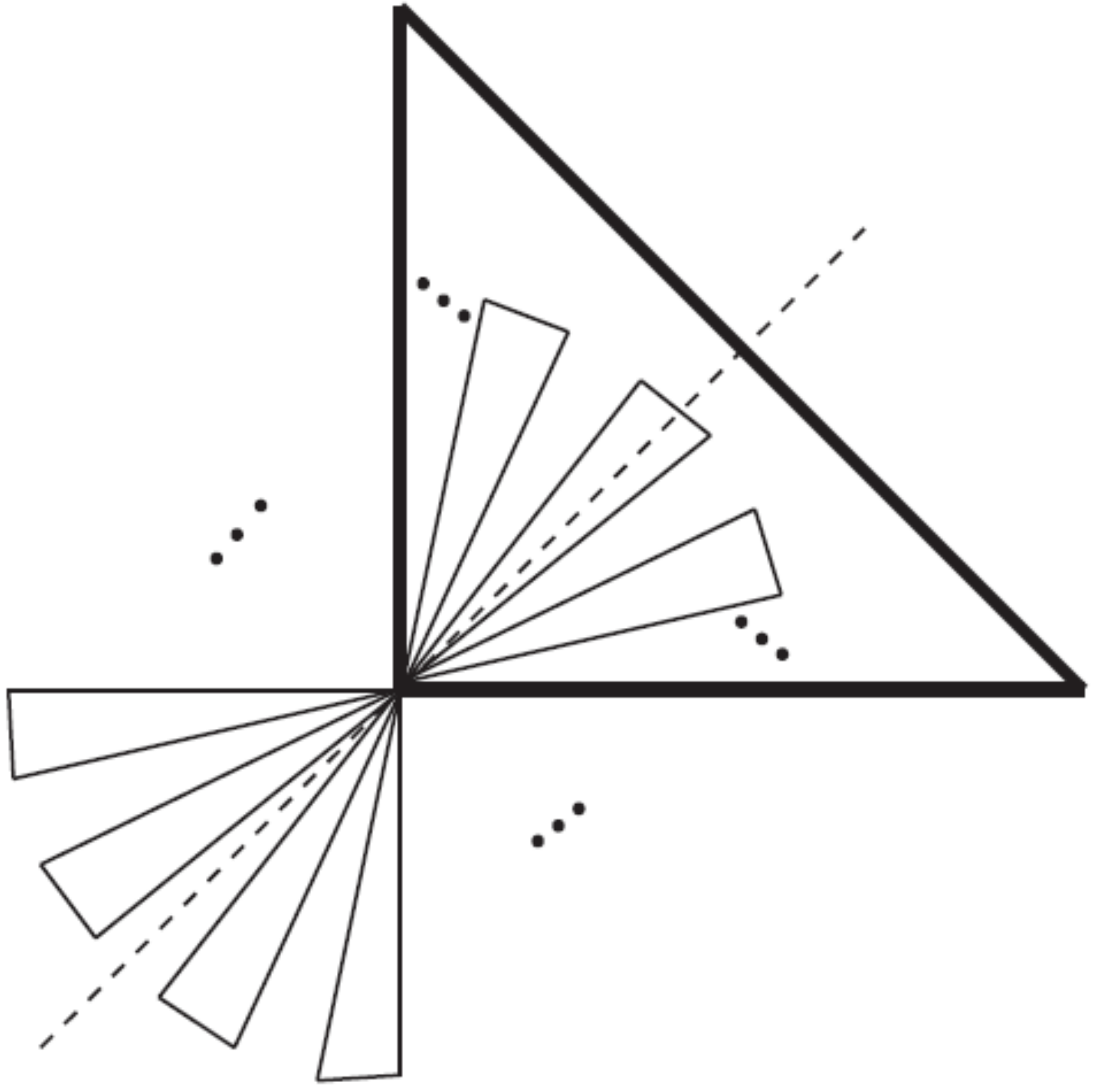}
\quad
\caption{}
\qquad
\end{subfigure}
\begin{subfigure}[b]{.3\textwidth}
\centering
\includegraphics[height=40mm]{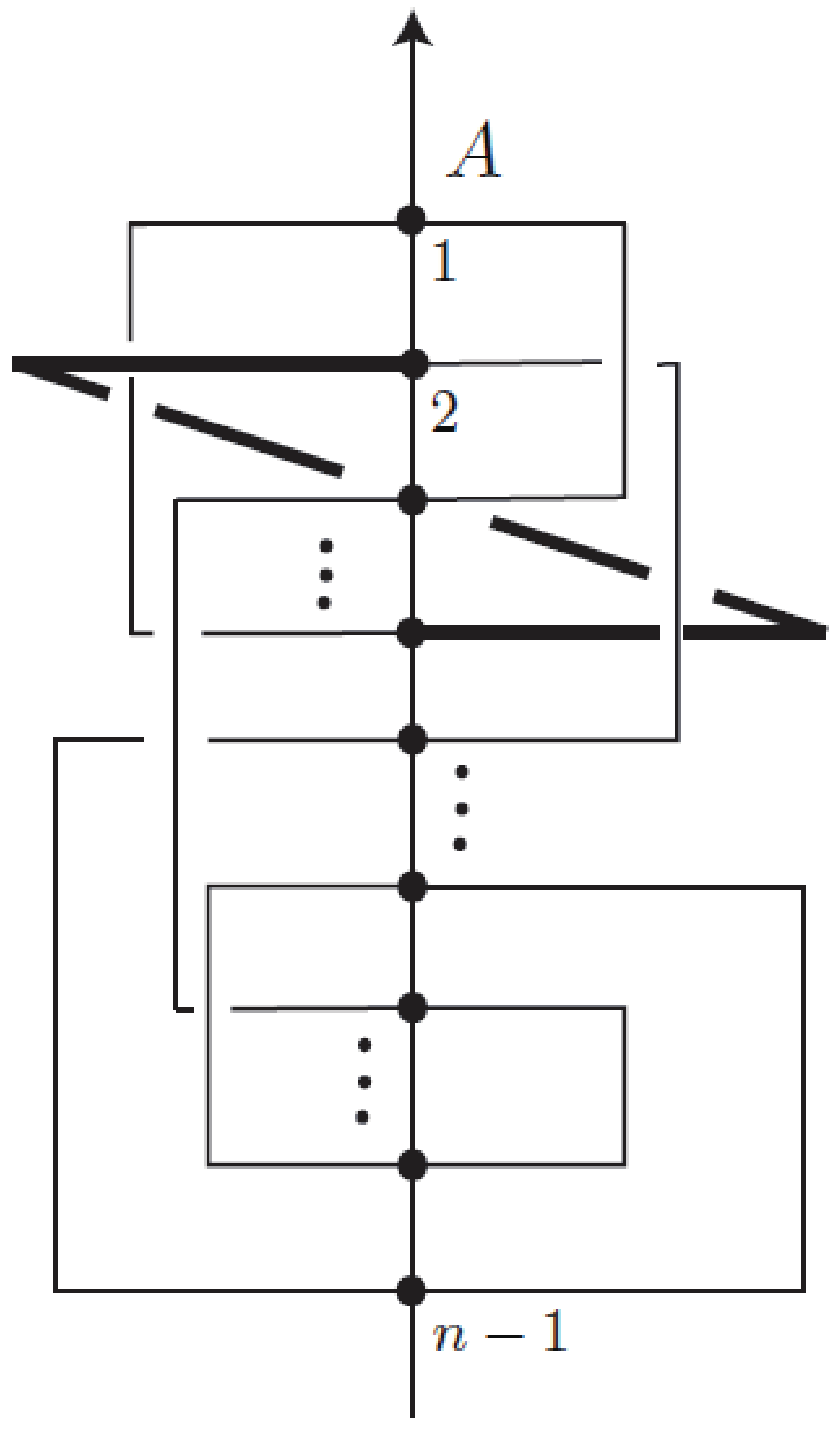}
\caption{}
\qquad
\end{subfigure}
\caption{Turning a pre-petal projection into a stick conformation.}\label{fig:petal}
\qquad
\end{figure}

Now we classify horizontal sticks of $K$ into three types. If a horizontal stick is a local maximum, that is, the two sticks connected to the horizontal stick point down (Figure \ref{fig:types}(a)), then we call it a \emph{max-horizontal stick}. Similarly, if a horizontal stick is a local minimum (Figure~\ref{fig:types}(b)), then we call it a \emph{min-horizontal stick}. If a horizontal stick is neither a maximum nor a minimum, then we call it an \emph{inflection-horizontal stick} (Figure~\ref{fig:types}(c)).

\begin{figure}[htb]
\centering
	\begin{subfigure}[b]{.25\textwidth}
		\includegraphics[height=40mm]{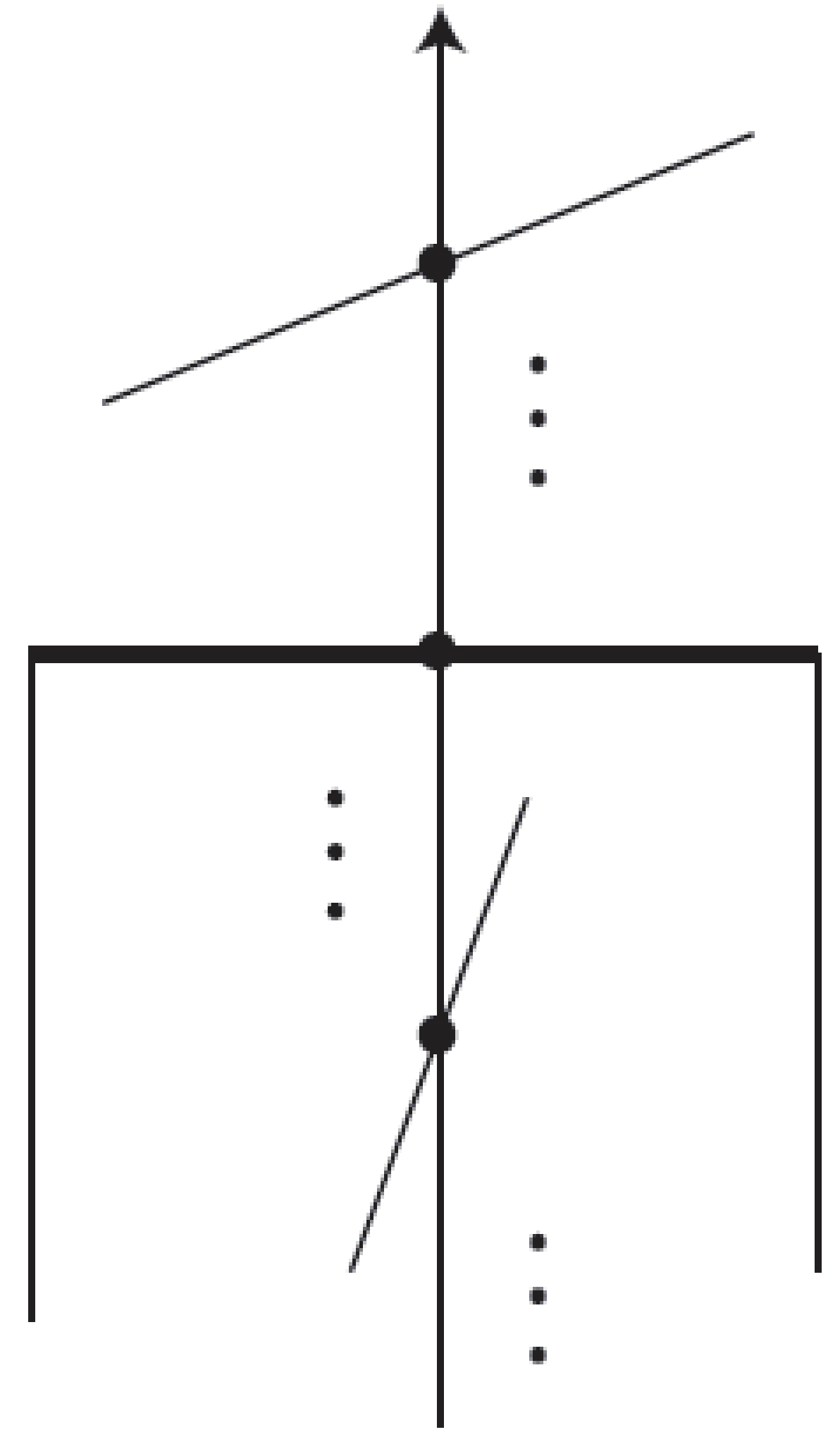}
		\caption{Max-horizontal stick.}
		\qquad
	\end{subfigure}
	\begin{subfigure}[b]{.25\textwidth}
		\includegraphics[height=40mm]{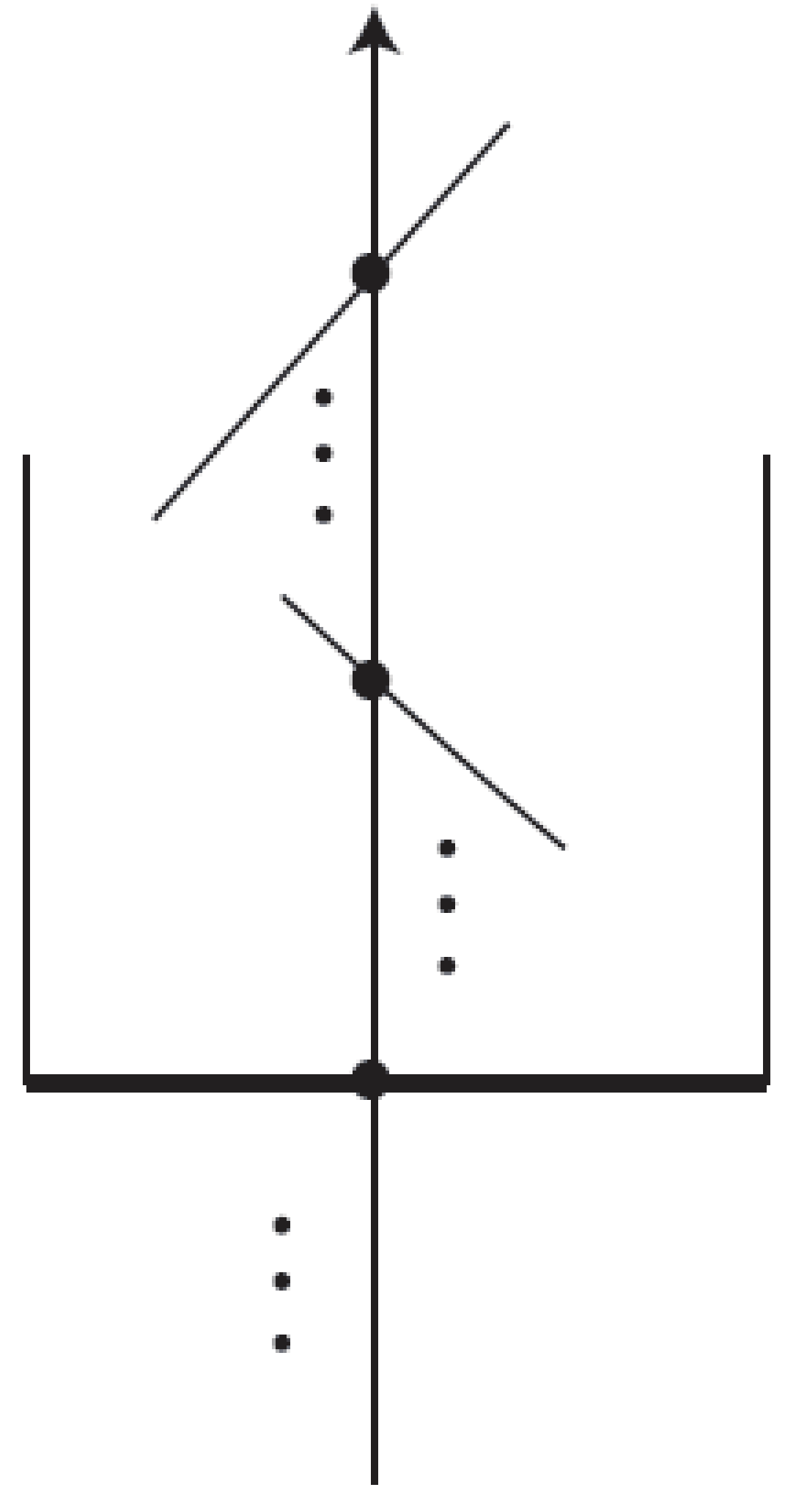}
		\caption{Min-horizontal stick.}\label{}
		\qquad
	\end{subfigure}
	\begin{subfigure}[b]{.25\textwidth}
	\includegraphics[height=40mm]{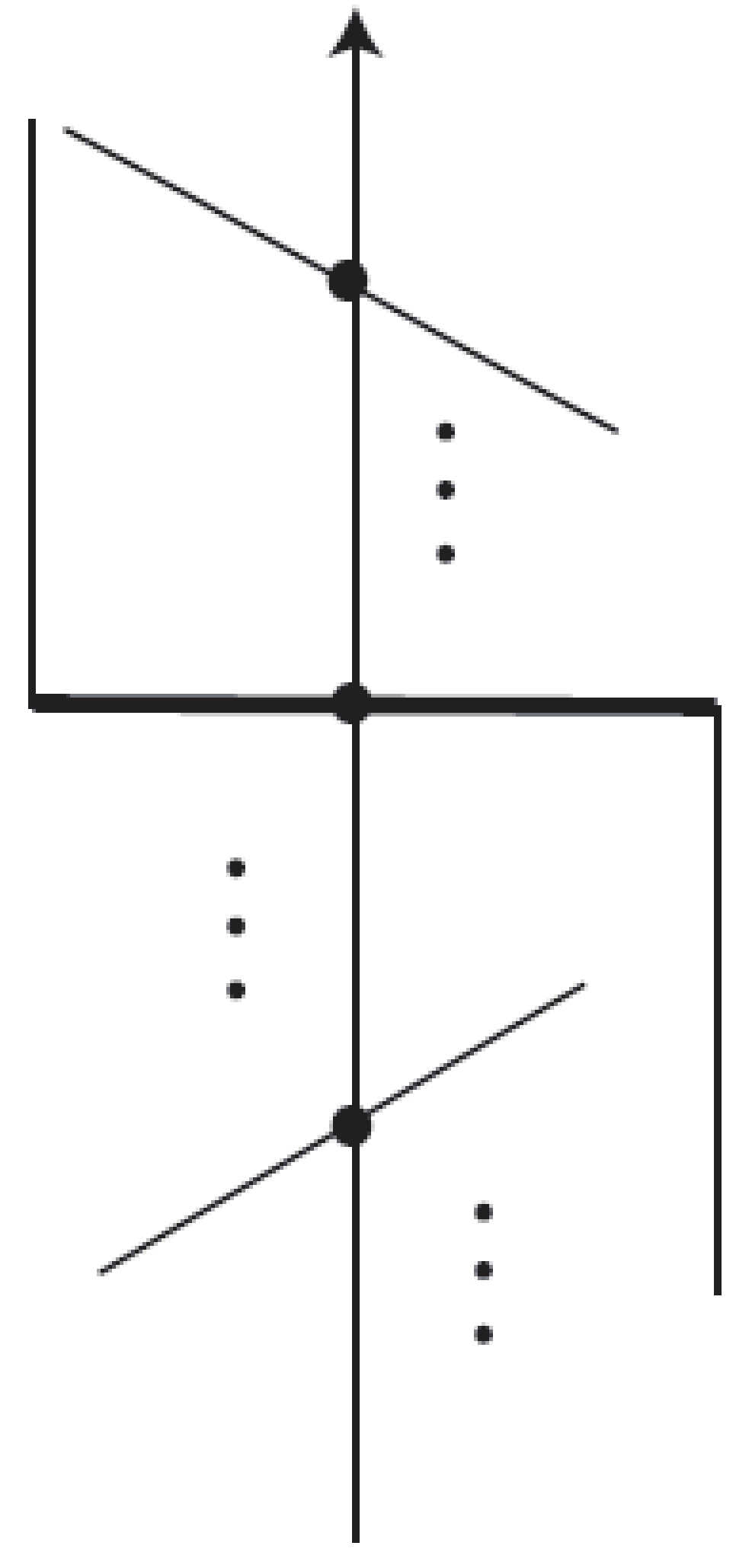}
	\caption{Inflection-horizontal stick.}\label{}
	\qquad
	\end{subfigure}
\caption{Types of horizontal sticks.}\label{fig:types}
\qquad
\end{figure}

Let $e_{k}$ denote the horizontal stick at  level $k$.  Suppose that the two horizontal sticks $e_{k}$ and $e_{j}$ are connected by a tilted stick $e_{kj}$ and $k > j$. Let $e_{k}'$ and $e_{j}'$ be the closure of the components of $e_{k}-A$ and  $e_{j}-A$ that intersect $e_{kj}$. Note that the three segments $e_{k}'$, $e_{kj}$ and $e_{j}'$ determine a tetrahedron $T_{kj}$ (Figure~\ref{fig:tetra}(a)) that does not intersect $K$ except on $A$ and along $e_{k}'$, $e_{kj}$, $e_{j}'$ since they project to a non-nesting loop.

\begin{figure}[htb]
\centering
\begin{subfigure}[b]{.2\textwidth}
\includegraphics[height=50mm]{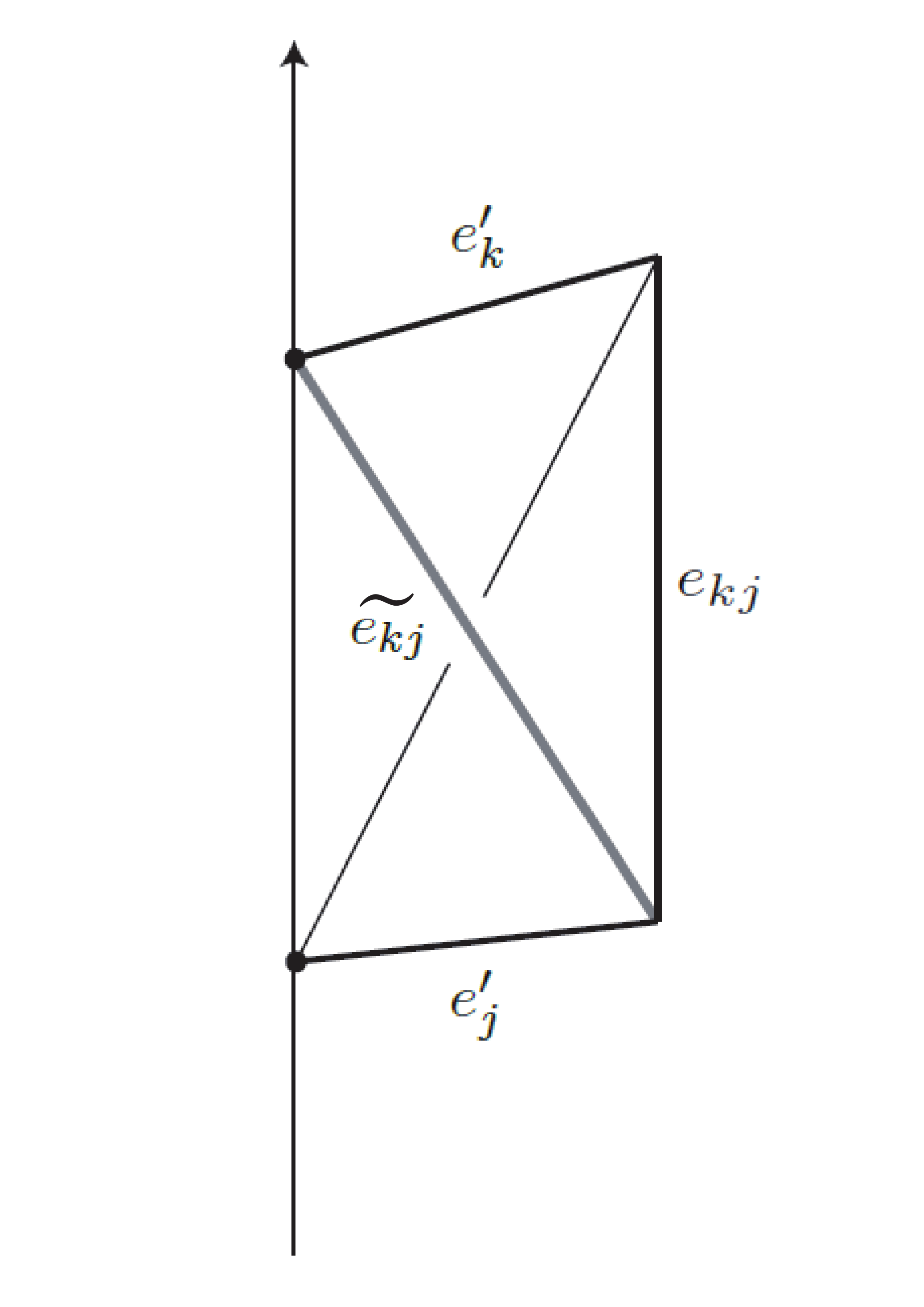}\hspace{2.5cm}
\caption{The tetrahedron $T_{kj}$.}
\qquad
\end{subfigure}
\begin{subfigure}[b]{.2\textwidth}
\includegraphics[height=50mm]{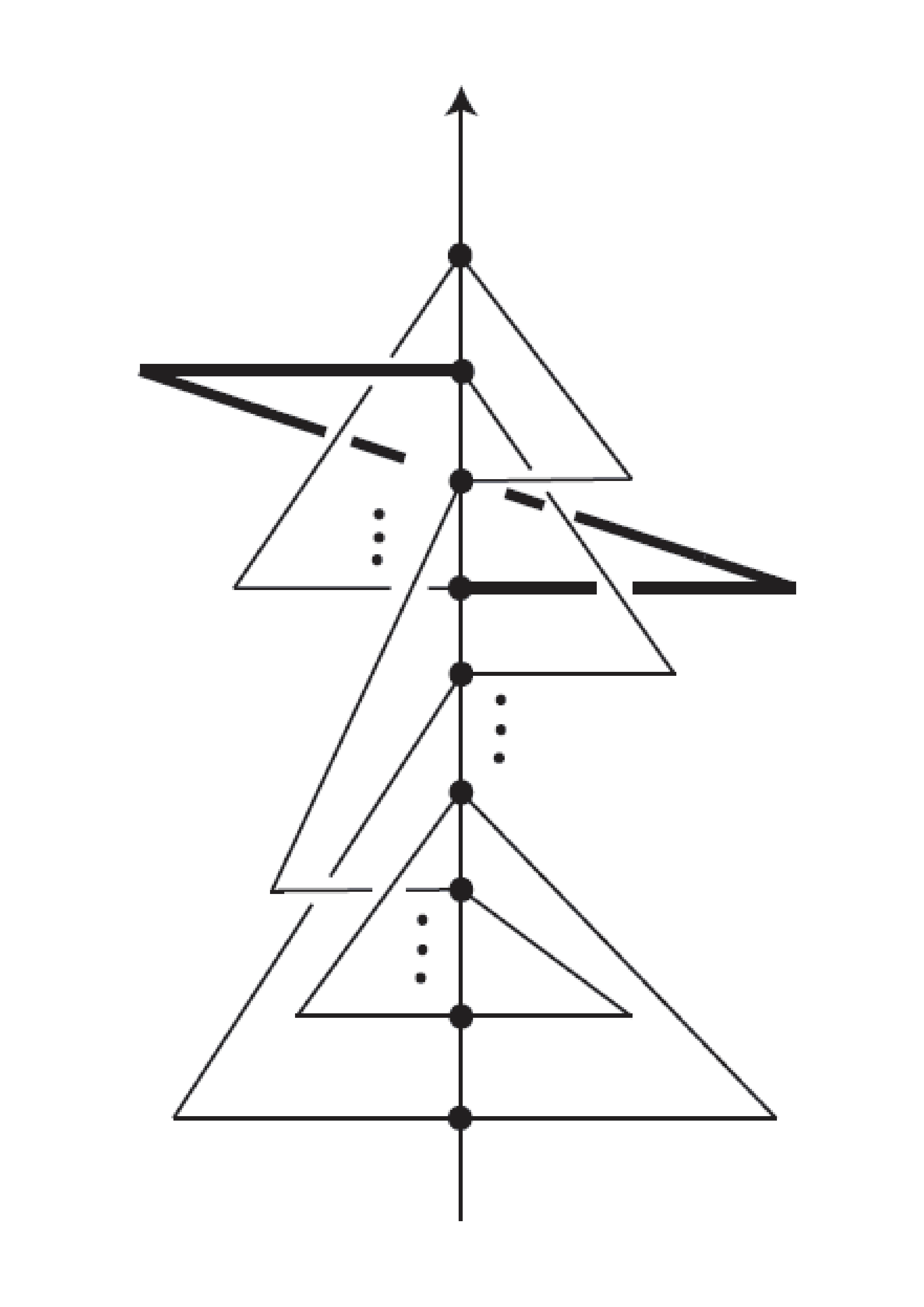}
\caption{New stick diagram.}
\qquad
\end{subfigure}
\caption{Eliminating horizontal sticks.}\label{fig:tetra}
\qquad
\end{figure}

We can exploit this tetrahedron by replacing the two sticks $e_{k}'$ and $e_{kj}$ by the edge of the tetrahedron that shares a face with them, which we denote $\widetilde{e_{kj}}$. (See Figure~\ref{fig:tetra}(a).)

After performing the above steps for all loops except the nesting loop, we obtain a new stick conformation as shown in Figure \ref{fig:tetra}(b). In this conformation, all max-horizontal sticks of the original stick diagram have been eliminated. 
    
Now we consider the case of inflection-horizontal sticks. Let $e_{h}''$ be the closure of a component of $e_{h}-A$ that was not eliminated by the above process. Let $e_{i}'$ and $e_{g}'$ be the two horizontal sticks to which it is attached by tilted sticks, the first below it and the second above it. We can choose an arbitrarily small number $\epsilon_{hi}>0$ such that $\epsilon_{hi}<\frac{1}{2}$min$\{d(\widetilde{e_{hi}},e)\}$ for any edge $e$ of $K$ except for $e_{h}''$ and $e_{i}'$. Consider the tubular neighborhood $N_{\epsilon_{hi}}$ of $\widetilde{e_{hi}}$ and the triangle $t_{hi}$ contained within it that is determined by $e_{hi}$, and the part of $e_{h}''$ at distance $\epsilon_{hi}$ from $A$. Since $N_{\epsilon_{hi}}$ does not intersect $K$ except along $e_{h}''$ and $e_{i}'$, $t_{hi}$ also does not intersect $K$ except along $e_{h}''$ and $e_{i}'$.  Then we note that the interior of the triangle determined by $\widetilde{e_{gh}}$ and $e_{h}''$ does not intersect $K$. Thus we can reduce the length of $e_{h}''$ to $\epsilon_{hi}$ without any crossing changes. Therefore, we can replace the  three edges $e_{h}''$, $\widetilde{e_{hi}}$  and $\widetilde{e_{gh}}$ by two edges, one edge joining the endpoint of the shortened $e_{h}''$ with the endpoint of  $e_{i}'$ not on $A$, and the other joining the same endpoint of the shortened $e_{h}''$ and the intersection of $\widetilde{e_{gh}}$ with $A$, as in Figure~\ref{fig:tubular}. If we apply the above method to all inflection-horizontal  sticks from bottommost to topmost, we can eliminate the inflection-horizontal sticks as well.
 
\begin{figure}[htb]
\centering
\includegraphics[height=80mm]{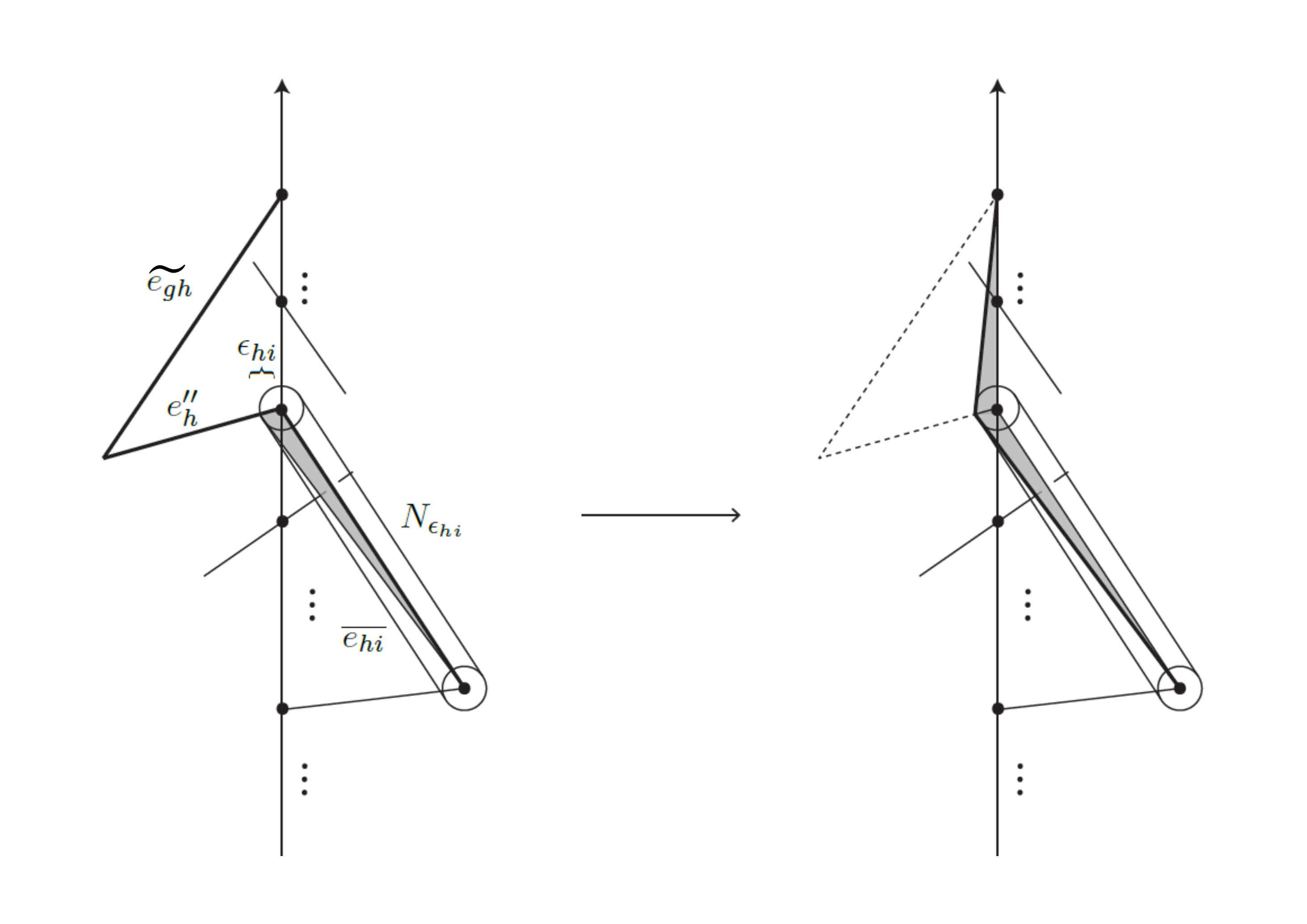}
\caption{Eliminating an inflection-horizontal stick.}\label{fig:tubular}
\qquad
\end{figure}

 
Note that we cannot eliminate the horizontal sticks corresponding to the nesting loop with this method, because the tetrahedron determined by the loop may intersect the other loops. Hence we can eliminate all max-horizontal and inflection-horizontal sticks of $K$ except possibly the top and bottom horizontal sticks of the nesting loop. (See Figure \ref{fig:example}.)

Suppose that before our stick elimination, our stick conformation of $K$ contains at least one inflection-horizontal stick. Since the number of min-horizontal sticks is equal to the number of max-horizontal sticks, the existence of at least one inflection-horizontal stick implies that the number of min-horizontal sticks is less than $\frac{n-1}{2}$, and the number of max-horizontal sticks and inflection sticks is at least $\frac{n+1}{2}$. If neither of the horizontal sticks of the nesting loop is min-horizontal, then we can eliminate at least $\frac{n+1}{2}-2=\frac{n-3}{2}$ sticks; otherwise we can eliminate more.

Now suppose that before our stick elimination, there are no inflection-horizontal sticks. Then the top and bottom horizontal sticks of the nesting loop must be max-horizontal and min-horizontal respectively, and the top one is the only max-horizontal stick in the conformation that we cannot eliminate. Thus, in this case, we can also reduce at least  $\frac{n-3}{2}$ sticks of $K$. Therefore we get the following inequality: $s(K)\leq 2(n-1)-(\frac{n-3}{2})=\frac{3}{2}n-\frac{1}{2}=\frac{3p(K)-1}{2}$.
\end{proof}

\begin{figure}[htb]
\centering
\includegraphics[height=50mm]{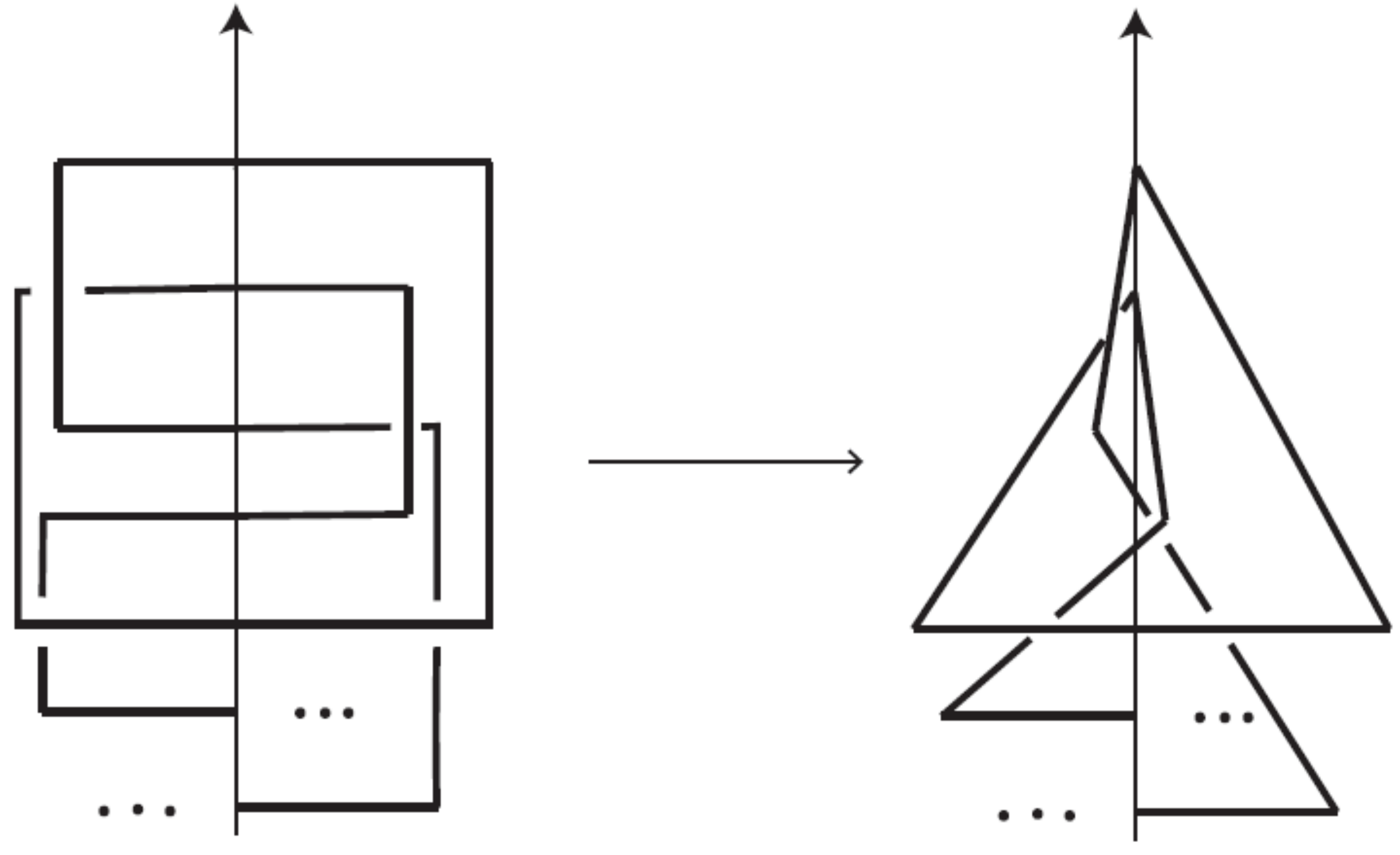}
\caption{An example of the elimination of non min-horizontal sticks of $K$.}\label{fig:example}
\qquad
\end{figure}

We now apply this result to torus knots. By ~\cite{Jin}, the stick number of an $(r,s)$ torus knot with $2\leq r<s<2r$ is equal to $2s$. Thus we obtain the following corollary.
\medskip
\medskip
\medskip
\medskip
\medskip

\begin{cor} Let $T_{r,s}$ be an (r,s)-torus knot. Then if  $\hspace{0.1cm}2\leq r<s<2r$,
 $p(T_{r,s})\geq\frac{4s+1}{3}$. 
\end{cor}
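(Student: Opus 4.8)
The plan is to combine the two facts that are already on the table: the inequality $s(K) \le \frac{3p(K)-1}{2}$ just proved, and Jin's computation that $s(T_{r,s}) = 2s$ when $2 \le r < s < 2r$. First I would substitute $K = T_{r,s}$ into the displayed inequality, so that $2s = s(T_{r,s}) \le \frac{3p(T_{r,s})-1}{2}$. Then I would solve this for $p(T_{r,s})$: multiplying through by $2$ gives $4s \le 3p(T_{r,s}) - 1$, hence $3p(T_{r,s}) \ge 4s+1$, and finally $p(T_{r,s}) \ge \frac{4s+1}{3}$, which is exactly the claimed bound.

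Since $p(T_{r,s})$ is an integer, one could additionally remark that the bound may be strengthened to $p(T_{r,s}) \ge \lceil \frac{4s+1}{3} \rceil$, though the statement as written only asks for the weaker form and so no ceiling is needed. I would also note explicitly that Jin's hypothesis $2 \le r < s < 2r$ is precisely the hypothesis under which the corollary is stated, so there is no gap: the torus knot $T_{r,s}$ in question is genuinely knotted (so $p$ is defined and the petal/stick machinery applies) and its stick number is known exactly.

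There is essentially no obstacle here — the corollary is a one-line consequence of the theorem plus a citation. The only thing to be careful about is the direction of the inequality: the theorem gives an \emph{upper} bound on $s(K)$ in terms of $p(K)$, so knowing $s(K)$ exactly yields a \emph{lower} bound on $p(K)$, which is indeed what the corollary claims. I would write the proof as: by Theorem stated above, $s(T_{r,s}) \le \frac{3p(T_{r,s})-1}{2}$; by \cite{Jin}, $s(T_{r,s}) = 2s$ under the stated hypotheses on $r,s$; combining and rearranging yields $p(T_{r,s}) \ge \frac{4s+1}{3}$.
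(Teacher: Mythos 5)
Your proposal is correct and is exactly the argument the paper intends: the corollary is stated immediately after the remark that $s(T_{r,s})=2s$ for $2\leq r<s<2r$ by \cite{Jin}, and substituting this into $s(K)\leq\frac{3p(K)-1}{2}$ and rearranging gives $p(T_{r,s})\geq\frac{4s+1}{3}$. Your observation about the direction of the inequality and the optional ceiling are both sound.
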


At the end of this paper, we determine the petal number for $(r, r+1)$-torus knots exactly. The next lemma will prove useful.


\begin{lemma}\label{monogonlemma}
Any nontrivial \"ubercrossing projection of a knot contains at least three non-nesting monogons.
\end{lemma}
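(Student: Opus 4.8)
The plan is to pass to the combinatorial skeleton of the übercrossing projection and reduce to a short statement about non-crossing matchings. Surround the übercrossing $x$ by a small disk $D$. Outside $D$ the projection is a collection of $n$ disjoint arcs joining the $2n$ points $q_1,\dots,q_{2n}$ at which the strands meet $\partial D$, listed in cyclic order, so these arcs form a non-crossing perfect matching $M$ on $\{q_1,\dots,q_{2n}\}$. Inside $D$, since every strand runs straight through $x$, the projection is the ``diameter'' matching $\sigma$ pairing $q_j$ with $q_{j+n}$. The projection is a knot (one component) precisely when the $4$-valent multigraph $M\cup\sigma$ on the $q_j$ is a single cycle. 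Regarding the projection in $S^2$ as a one-vertex graph with $n$ loop-edges, it has $n+1$ complementary faces; the first thing I would record is the dictionary: a non-nesting monogon is a face that is a disk meeting the projection in a single loop (automatically a non-nesting loop), and such faces correspond bijectively to the \emph{innermost} arcs of $M$, i.e.\ arcs joining two cyclically consecutive points $q_j,q_{j+1}$.

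Next I would set up the dual tree. Declaring two faces adjacent when they share a loop-edge gives a graph $T$ with $n+1$ vertices and $n$ edges which is connected, being the dual of a connected plane graph, hence a tree; its leaves are exactly the faces bounded by a single loop, that is, the non-nesting monogons. A nontrivial knot cannot have an übercrossing projection with $n\le 2$ (a single $2$-crossing is an ordinary crossing, so the projection is the unknot), so $n\ge 3$, $T$ has at least $4$ vertices, and $T$ therefore has at least two leaves — giving at least two non-nesting monogons already. Assume for contradiction that there are exactly two. Then $T$ is a path, and I claim this forces $M$ to be the ``nested'' matching: starting from the lune of an innermost arc $\{q_i,q_{i+1}\}$ and reading outward, the requirement that each intermediate face be bounded by exactly two loops forces the next arc to be $\{q_{i-1},q_{i+2}\}$, then $\{q_{i-2},q_{i+3}\}$, and so on, so that $M$ pairs $q_j$ with $q_{2i+1-j}$ (indices mod $2n$).

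Finally I would extract the contradiction from the knot condition. For this nested $M$ one checks directly that the permutation $M\sigma$ satisfies $(M\sigma)^2=\mathrm{id}$, so every orbit of $M\sigma$ has size at most $2$; hence $M\sigma$ has at least $n$ orbits and $M\cup\sigma$ splits into at least $\lceil n/2\rceil\ge 2$ cycles. Thus the projection has more than one component, so it is not a knot, contradicting the hypothesis. Therefore a nontrivial übercrossing projection has at least three non-nesting monogons.

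The step I expect to be the main obstacle is the middle one — showing that ``$T$ is a path'' genuinely pins $M$ down to the nested matching. This requires the precise local picture of how a face of the projection sits relative to $\partial D$, namely which circular arcs of $\partial D$ and which arcs of $M$ bound it; so I would first state cleanly the correspondence between faces of the projection and the regions that $M$ cuts out of the disk $S^2\setminus D$, and only then run the peeling argument. Everything else — the tree structure, the leaf count, and the final permutation computation — is routine once that dictionary is in place. (A more hands-on alternative would be to argue that a non-nesting monogon can always be untwisted so as to reduce $n$, and that with at most two monogons this terminates at the unknot; but I expect the combinatorial route above to be cleaner to write.)
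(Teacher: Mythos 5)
Your proof is correct and follows essentially the same route as the paper: both arguments reduce to the Euler characteristic count of faces of the one-vertex graph on $S^2$ (your dual tree with $n+1$ vertices and $n$ edges having at least two leaves is exactly the paper's identity $p_1=2+p_3+2p_4+\cdots$), and both then rule out the two-monogon case by observing it forces the all-bigon (nested) configuration, which is not a nontrivial knot — the paper leaves that last verification as ``one can check,'' whereas your peeling argument and the computation $(M\sigma)^2=\mathrm{id}$ carry it out explicitly, which is a genuine improvement in rigor. The one point you should add, and which the paper handles with a small isotopy, is that in the \emph{planar} projection one of the monogon faces may be the unbounded face, whose bounding loop is then a nesting loop; that monogon must be isotoped over the rest of the diagram before it counts as a non-nesting monogon.
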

\begin{proof}
We embed the \"ubercrossing projection on $S^2$, partitioning the surface. In this partition, let $e$ represent the number of edges, $v$ the number of vertices, and $f$ the number of faces. Since $S^2$ has an Euler Characteristic of 2, we have $v-e+f=2$. This partition has one vertex at the \" ubercrossing; hence $v=1$. Let $p_i$ represent the number of regions with $i$ edges.

Since each edge borders two faces, we have $p_1+2p_2+3p_3+...=2e$. Euler's formula then gives $2-(p_1+2p_2+3p_3+4p_4+...)+2(p_1+p_2+p_3+p_4+...)=4$. Hence
\begin{equation}\label{Euler}
p_1=2+p_3+2p_4+3p_5+\cdots.
\end{equation}
Therefore the projection contains at least two monogons. However, for the partition to have only two monogons, the rest of the faces must be bigons by \eqref{Euler}, and one can check that all partitions whose faces are all bigons save for two monogons correspond to either the trivial knot or a link.

\begin{figure}[h]
\centering
\includegraphics[height=40mm]{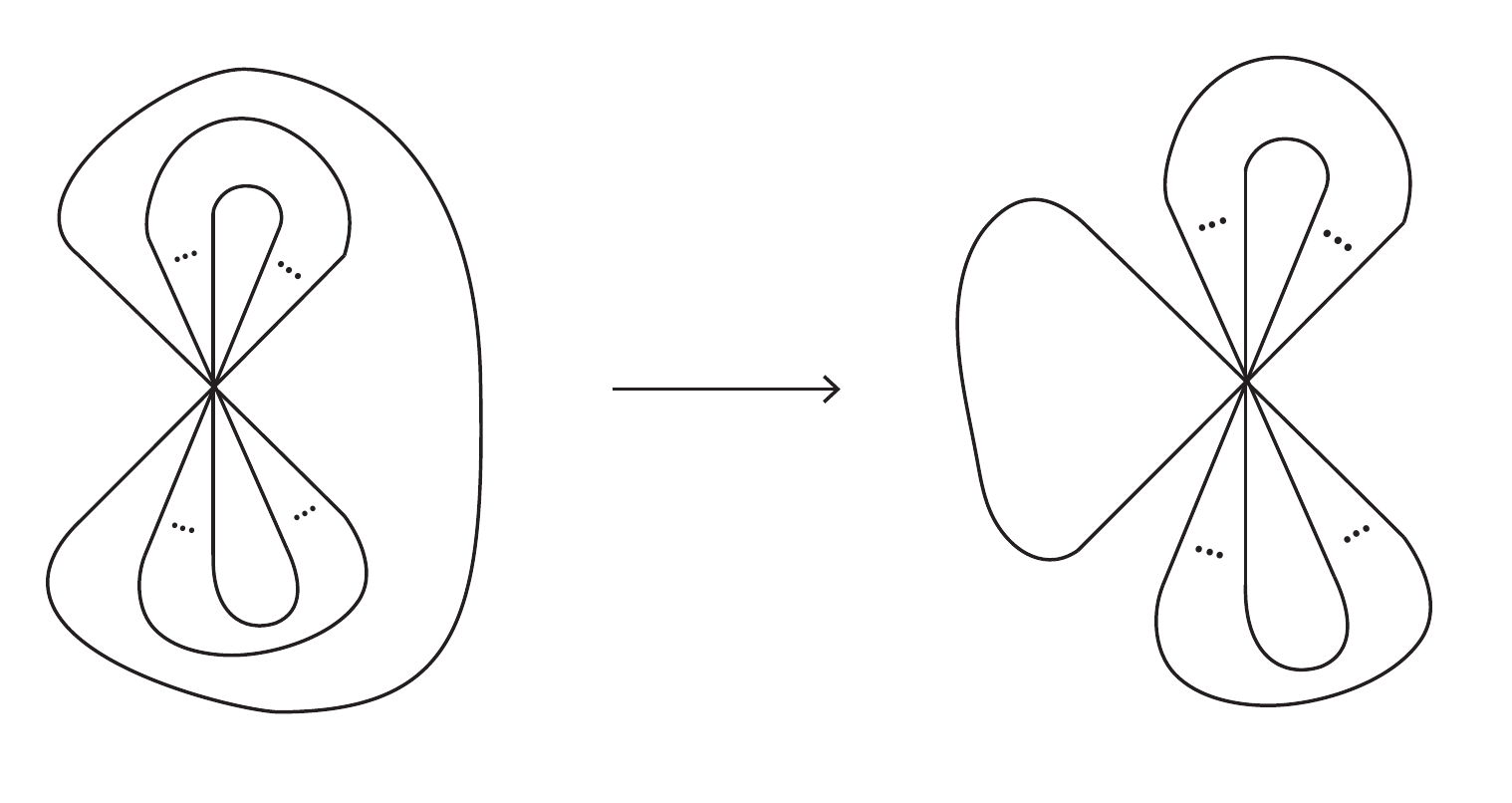}
\caption{Isotoping a monogon to border a finite area region.}
\label{fig:lemmaisotopy}
\qquad
\end{figure}

In the planar projection one monogon may correspond to the outside face. If this occurs, we isotope the monogon as shown in Figure \ref{fig:lemmaisotopy} to ensure that it encloses a finite region of the plane.
\end{proof}

\begin{remark}
Given a minimal \"ubercrossing projection of $L$, we may perturb the strands at the \"ubercrossing to create a regular projection of $L$ with $\binom{\ub(L)}{2}$ double crossings, so we have $c(K)\leq \frac{\ub(L)(\ub(L)-1)}{2}$. We can improve this bound by noting that any innermost loops may be untwisted before the perturbation, eliminating one crossing per monogon from the perturbed projection. Hence by Lemma \ref{monogonlemma},  when $K$ is a knot, $c(K)\leq \frac{\ub(L)(\ub(L)-1)}{2}-3$. We note that the trefoil knot realizes this upper bound on crossing number.
\end{remark}


\begin{defn}
The \emph{nesting number} of $L$, $n(L)$, is the least number of nesting loops in any \"ubercrossing projection of $L$ that realizes $\ub(L)$.
\end{defn}

 An \emph{arc presentation} of a knot is a conformation of the knot lying in a set of half planes hinging on a central axis, each containing a single simple arc of the knot. The arc index of a knot $K$, denoted $\alpha(K)$, is the least number of half-planes necessary. For a more thorough discussion of arc index, we refer the reader to \cite{Crom2}.

Note that if we have a petal projection, then by placing each loop in a half-plane hinging on an axis passing vertically through the central crossing, we generate an arc presentation. Hence, $\alpha(K) \le p(K)$. We call any such arc presentation a \emph{petal arc presentation}.  We would further like to consider the relation between $\ub(K)$ and $\alpha(K)$.

\begin{thm}\label{nestinglemma}
Let $L$ be a knot or link and let $\alpha(L)$ be the arc index of $L$. Then the following inequality holds:
\[
\alpha(L) \le \ddot{u}(L) + n(L).
\]
\end{thm}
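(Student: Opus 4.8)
The plan is to start from an übercrossing projection $P$ of $L$ realizing $\ddot u(L)$ and having exactly $n(L)$ nesting loops, and to convert it into an arc presentation by a bookkeeping argument that counts half-planes carefully. Recall that any petal projection yields an arc presentation with one half-plane per loop, giving $\alpha \le p$; here we want to do better by exploiting that loops which do not nest anything can share a half-plane with their neighbors rather than each demanding a fresh one. So first I would set up the vertical axis $A$ through the übercrossing, perpendicular to the projection plane, and assign to each of the $\ddot u(L)$ loops of $P$ a position in a half-plane hinging on $A$; the übercrossing itself becomes a sequence of points on $A$ at the levels $1,2,\dots,\ddot u(L)$ recorded by the übercrossing permutation.

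The key observation I would exploit is that consecutive non-nesting loops can be amalgamated. Walking around the übercrossing, the strands enter and leave at prescribed levels on $A$; a non-nesting (innermost) loop occupies a half-plane but, because nothing sits inside it, the arc it contributes can be pushed to share its half-plane with the arc entering on the adjacent strand — effectively the two ``sides'' of an innermost loop, together with the next arc, can be routed in a single half-plane without introducing intersections, since by Lemma~\ref{monogonlemma}-type reasoning the innermost loop bounds an empty monogon. Each such amalgamation saves one half-plane. The nesting loops, by contrast, genuinely separate the plane and force the strands they enclose to lie in half-planes ``between'' the two occurrences of the nesting strand on $A$, so each nesting loop must be given its own half-plane (or more precisely, cannot be absorbed into a neighbor). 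Carrying this out, the $\ddot u(L)$ loops get distributed into half-planes where every nesting loop contributes one half-plane and the non-nesting loops are compressed so that, together with the structural arcs, the total number of half-planes is at most $\ddot u(L) + n(L)$.

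Concretely, the steps in order are: (1) fix the übercrossing projection realizing $\ddot u(L)$ with $n(L)$ nesting loops and set up $A$ with the level structure; (2) produce the naive arc presentation with roughly $2\ddot u(L)$ arcs, or directly describe how the strand through the übercrossing, traversed in order, passes through a sequence of half-planes determined by the level permutation; (3) show that between two consecutive nesting loops (or between a nesting loop and the base arc), a maximal run of non-nesting loops together with the connecting arcs can be isotoped into a bounded number of half-planes — I expect the right count is that $k$ non-nesting loops in a row need about $k+1$ half-planes counted against a budget, and the global tally telescopes to $\ddot u(L)+n(L)$; (4) verify the resulting configuration is a legitimate arc presentation: each half-plane meets $L$ in a single simple arc, and no crossing information is violated because the level data on $A$ was preserved throughout.

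The main obstacle will be step (3): making the amalgamation of runs of non-nesting loops precise, and getting the arithmetic to land exactly on $\ddot u(L) + n(L)$ rather than something weaker like $\ddot u(L) + 2n(L)$ or $2\ddot u(L)$. The delicate point is that an innermost loop's two endpoints on $A$ are ``used up'' as entry/exit levels, and one must check that reusing a half-plane for the outgoing arc of one loop and the incoming arc of the next does not create a forbidden double arc in that half-plane; this requires a local picture near $A$ showing the strands can be perturbed off one another within the shared half-plane. I would handle the link case identically, since nothing in the argument uses connectedness of $L$ beyond the übercrossing structure, which Corollary~\ref{cor:uberlinks} guarantees exists.
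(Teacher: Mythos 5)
Your proposal has a genuine gap, and it sits exactly where you flag it: the ``amalgamation'' in step (3). An arc presentation requires each half-plane to contain a single simple arc with both endpoints on the axis and interior disjoint from the axis. Routing ``the two sides of an innermost loop, together with the next arc'' into one shared half-plane either places two arcs in one page or produces an arc whose interior meets the axis at the intermediate level; neither is allowed, so each of the $\ddot u(L)$ loops must occupy its own page, and there are no savings to be extracted from innermost loops. Your baseline of ``roughly $2\ddot u(L)$ arcs'' is also never justified, and you concede that you only ``expect'' the tally to telescope to $\ddot u(L)+n(L)$, so the arithmetic is not established.

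More importantly, the proposal misidentifies what the nesting loops cost. The obstruction is not that a nesting loop ``demands a fresh half-plane'' while its neighbors can share; it is that a nesting loop cannot be flattened into any single half-plane at all, because it spans the angular sectors of all the loops it encloses, and your outline never explains how such a loop gets into a page. The paper's proof rests on one idea that is absent from your proposal: an isotopy (Figure \ref{fig:nesttoarc}) that pulls each nesting loop in toward the \"ubercrossing, splitting it into two loops and eliminating its nesting at the cost of exactly one additional loop. Applying this to all $n(L)$ nesting loops of a minimal \"ubercrossing projection yields a nesting-free projection with $\ddot u(L)+n(L)$ loops, each in its own angular sector, and the standard one-loop-per-page construction (the same one giving $\alpha(K)\le p(K)$) then produces an arc presentation with $\ddot u(L)+n(L)$ pages. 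No amalgamation or budget argument is needed.
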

  		
\begin{proof}
Given a knot or link $L$, we construct an arc presentation of $L$ in the following way. Consider a minimal \"{u}bercrossing projection of $L$ with $n(L)$ nesting loops. For each nesting loop, we isotope the loop towards the \"{u}bercrossing without producing new crossings as shown in Figure \ref{fig:nesttoarc}. 
			
\begin{figure}[h]
\centering
\includegraphics[height=25mm]{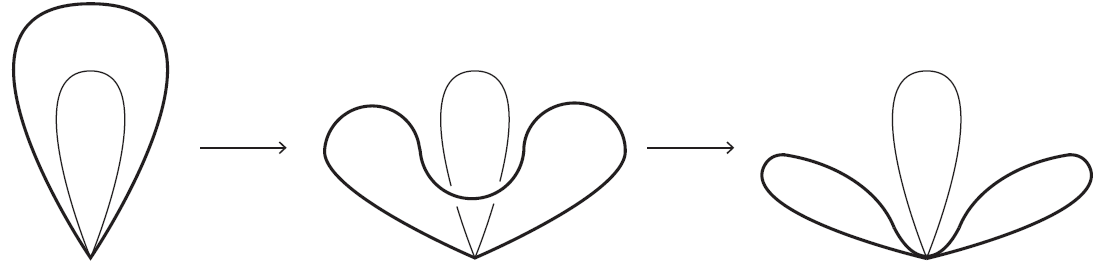}
\caption{Isotoping a nesting loop to turn an  \"{u}bercrossing conformation into an arc presentation.}
\label{fig:nesttoarc}
\end{figure}
			
After isotoping each loop, we have a projection of $L$ with one more loop than the previous projection. At the end of this process, we have a projection of $L$ that can be turned into an arc presentation with $\ddot{u}(L) + n(L)$ number of pages (and loops); each loop in its own page.
\end{proof}

\begin{cor}
Let L be a link. Then the following inequality holds:
\[
\ddot{u}(L) \ge \frac{1}{2}(\alpha(L) + 2).
\]
If $L$ is a knot, then $\ddot{u}(L) \ge \frac{1}{2}(\alpha(L) + 3)$.

\end{cor}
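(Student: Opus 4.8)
The plan is to derive both inequalities directly from Theorem~\ref{nestinglemma} once the nesting number $n(L)$ is controlled in terms of $\ub(L)$. Theorem~\ref{nestinglemma} gives $\alpha(L)\le \ub(L)+n(L)$, so it suffices to prove $n(L)\le \ub(L)-2$ when $L$ is a link and $n(L)\le \ub(L)-3$ when $L$ is a knot; rearranging then yields $\ub(L)\ge\tfrac12(\alpha(L)+2)$ and $\ub(L)\ge\tfrac12(\alpha(L)+3)$ respectively.

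To bound $n(L)$, fix an übercrossing projection $P$ realizing $\ub(L)$ whose number of nesting loops equals $n(L)$; it has $\ub(L)$ loops, exactly $\ub(L)-n(L)$ of which are non-nesting. The non-nesting loops of $P$ are exactly those bounding a monogon face of the projection, once the monogon-straightening isotopy of Figure~\ref{fig:lemmaisotopy} has been applied so that no monogon is the outer region of the planar picture, and distinct monogons are bounded by distinct loops. Running the Euler-characteristic count from the proof of Lemma~\ref{monogonlemma} — embed $P$ on $S^2$ with its single vertex at the übercrossing — equation \eqref{Euler} gives that the number $p_1$ of monogon faces satisfies $p_1\ge 2$. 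Hence $\ub(L)-n(L)\ge 2$, which is the link bound. When $L$ is a knot, Lemma~\ref{monogonlemma} upgrades this to at least three non-nesting monogons, so $\ub(L)-n(L)\ge 3$. Substituting into $\alpha(L)\le\ub(L)+n(L)$ gives $\alpha(L)\le 2\ub(L)-2$ for a link and $\alpha(L)\le 2\ub(L)-3$ for a knot, as desired.

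The one point that needs care is the identification of monogon faces with non-nesting loops together with the bookkeeping for the outer region: for knots this is already packaged cleanly in Lemma~\ref{monogonlemma}, and for links one only needs to re-derive the weaker bound $p_1\ge 2$, which is immediate from \eqref{Euler}. One should also note that the isotopy of Figure~\ref{fig:lemmaisotopy} changes neither the übercrossing nor the number of loops, so $P$ remains a minimal übercrossing projection afterward; granting this, the remainder of the argument is routine.
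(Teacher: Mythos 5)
Your proposal is correct and follows essentially the same route as the paper: bound the number of non-nesting loops below by 2 (for links) or 3 (for knots) via the Euler-characteristic count of Lemma~\ref{monogonlemma}, deduce $n(L)\le\ub(L)-2$ (resp.\ $\ub(L)-3$), and substitute into Theorem~\ref{nestinglemma}. Your extra attention to the monogon/non-nesting-loop identification and the outer-face bookkeeping is a point the paper leaves implicit, but the argument is the same.
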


\begin{proof}
By the reasoning in the proof of Lemma \ref{monogonlemma} the number of non-nesting loops in an \"ubercrossing projection of $L$ is greater than or equal to 2. Therefore,
\[
n(L) \le \ddot{u}(L) - 2
\]
and by Theorem \ref{nestinglemma}, the corollary follows. Furthermore, if $L$ is  a knot then $n(L) \le \ddot{u}(L) - 3$ by Lemma \ref{monogonlemma}.
\end{proof}
	
\begin{cor}
For a non-split alternating link L, the following inequality holds: 
\[
\ddot{u}(L) \ge c(L) + 2 - n(L).
\]
\end{cor}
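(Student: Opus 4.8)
The plan is to reduce the statement to Theorem~\ref{nestinglemma} combined with the classical evaluation of the arc index of an alternating link. Two facts about arc index are needed. First, by the theorem of Bae and Park, every link satisfies $\alpha(L)\le c(L)+2$. Second, for a non-split alternating link the reverse inequality also holds: via the Morton--Beltrami inequality and the computation of the $a$-spread of the Kauffman polynomial for alternating links one obtains $\alpha(L)\ge c(L)+2$ when $L$ is prime alternating, and this extends to all non-split alternating links because arc index is additive under connected sum, $\alpha(L_1\#L_2)=\alpha(L_1)+\alpha(L_2)-2$, while crossing number is additive on alternating links, so the equality $\alpha=c+2$ is preserved. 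Hence for a non-split alternating link $L$ we have $\alpha(L)=c(L)+2$.

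Granting this, the corollary follows in one line. By Theorem~\ref{nestinglemma} we have $\alpha(L)\le\ddot{u}(L)+n(L)$, and substituting $\alpha(L)=c(L)+2$ gives
\[
c(L)+2=\alpha(L)\le\ddot{u}(L)+n(L),
\]
which rearranges to $\ddot{u}(L)\ge c(L)+2-n(L)$, as desired.

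There is essentially no obstacle in the argument itself; the only work is in quoting the arc-index results correctly and verifying their hypotheses. The point that deserves care is that the equality $\alpha(L)=c(L)+2$ genuinely requires the non-split alternating hypothesis in the statement — for a general link only the inequality $\alpha(L)\le c(L)+2$ is available, which yields no lower bound on $\ddot{u}(L)$ — so the hypothesis is exactly what is needed to pin down the arc index and make the substitution go through. (For knots one could additionally feed this into the bound $\ddot{u}(K)\ge\tfrac12(\alpha(K)+3)$ from the preceding corollary, but that is a separate remark and not needed here.)
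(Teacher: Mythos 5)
Your proof is correct and follows essentially the same route as the paper: quote the arc-index equality $\alpha(L)=c(L)+2$ for non-split alternating links (the paper cites Corollary 8 of Bae--Park directly, where you additionally sketch how the lower bound arises from Morton--Beltrami and the Kauffman polynomial) and then substitute into Theorem~\ref{nestinglemma} to obtain $\ddot{u}(L)\ge c(L)+2-n(L)$. No gaps; the extra justification of the arc-index equality is harmless but not needed beyond the citation.
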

	
\begin{proof}
By Corollary 8 of \cite{BP} for a non-split alternating link $L$, 
\[
\alpha(L) = c(L) + 2.
\]
The corollary now follows from Theorem \ref{nestinglemma}.
\end{proof}

\begin{cor}\label{braidthm}

If a knot $K$ has \"ubercrossing number $\ddot{u}(K)$ and braid index $\beta(K)$, then 
\[
\beta(K)\leq\ddot{u}(K)-2.
\]
\end{cor}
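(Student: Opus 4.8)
Proof proposal for Corollary \ref{braidthm} ($\beta(K)\leq \ddot{u}(K)-2$).

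The plan is to combine Theorem \ref{nestinglemma} with a classical inequality relating braid index and arc index, namely $\beta(K)\leq\alpha(K)-1$ (this is standard; it follows, e.g., from the fact that an arc presentation with $\alpha$ pages can be rearranged into a braid on $\alpha-1$ strands — see Cromwell's work referenced as \cite{Crom2}). Starting from Theorem \ref{nestinglemma}, we have $\alpha(K)\leq\ddot{u}(K)+n(K)$, and since $K$ is a knot, Lemma \ref{monogonlemma} gives $n(K)\leq\ddot{u}(K)-3$. Combining,
\[
\beta(K)\leq\alpha(K)-1\leq \ddot{u}(K)+n(K)-1\leq \ddot{u}(K)+(\ddot{u}(K)-3)-1=2\ddot{u}(K)-4.
\]
This is too weak; it gives roughly twice the desired bound. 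So the naïve chain does not suffice, and a more direct geometric argument is needed.

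Instead, I would construct a braid representative directly from a minimal übercrossing projection. Take an übercrossing projection realizing $\ddot{u}(K)$, with its loops emanating from the single multi-crossing. The key observation is that an übercrossing projection is already "almost" a closed braid: choose a braid axis, and the issue is only the loops that wind the "wrong way" around it. First I would isotope the übercrossing projection so that the $\ddot{u}(K)$ strands at the crossing are arranged with a consistent cyclic structure, then place a braid axis through (or near) the übercrossing. Each of the non-nesting monogons guaranteed by Lemma \ref{monogonlemma} can be absorbed or used to reduce the strand count — intuitively, the three non-nesting monogons provide "slack" that lets us remove strands from the braid closure, accounting for the $-2$ (and perhaps allowing even $-3$, but the statement only claims $-2$). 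The cleanest version: show that the übercrossing conformation yields a closed braid on at most $\ddot{u}(K)-1$ strands by the arc-presentation route, and then improve by one more strand by exploiting a single non-nesting monogon, which can be slid over the braid axis to eliminate one strand without raising the others.

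The main obstacle will be making the strand-count bookkeeping precise: converting the übercrossing (or the associated arc presentation from Theorem \ref{nestinglemma}) into a braid requires choosing the braid axis and orienting the loops so that the number of strands is controlled, and then carefully verifying that at least two strands can be eliminated using the guaranteed non-nesting monogons without introducing new ones. A secondary subtlety is that Theorem \ref{nestinglemma} uses $n(L)$, the nesting number in a \emph{minimal} übercrossing projection, so one must be careful that the projection chosen to build the braid simultaneously realizes $\ddot{u}(K)$ and has few enough nesting loops — or else argue directly from Lemma \ref{monogonlemma}'s count of non-nesting monogons rather than routing through $n(K)$.
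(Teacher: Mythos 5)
Your first chain of inequalities fails because you start from the wrong relation between braid index and arc index. The paper's proof uses Cromwell's sharper result from \cite{Crom}, namely $\beta(K)\leq\frac{\alpha(K)}{2}$ (an arc presentation with $\alpha$ pages yields a closed braid on at most $\alpha/2$ strands, since the arcs whose orientation runs the ``wrong way'' around the binding can be flipped over the axis). Feeding this into Theorem \ref{nestinglemma} and Lemma \ref{monogonlemma} gives
\[
\beta(K)\;\leq\;\tfrac{1}{2}\bigl(\ddot{u}(K)+n(K)\bigr)\;\leq\;\tfrac{1}{2}\bigl(2\ddot{u}(K)-3\bigr)\;=\;\ddot{u}(K)-\tfrac{3}{2},
\]
and then integrality of $\beta(K)$ yields $\beta(K)\leq\ddot{u}(K)-2$. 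With your weaker inequality $\beta\leq\alpha-1$ the factor of $\tfrac12$ is lost, which is exactly why you land at $2\ddot{u}(K)-4$; the missing idea is not a cleverer geometric construction but the halving in Cromwell's bound, plus the final rounding step from $\ddot u-\tfrac32$ down to $\ddot u-2$.

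Your fallback --- building a closed braid directly from the \"ubercrossing projection and ``absorbing'' the three guaranteed non-nesting monogons to drop two strands --- is only a heuristic sketch. You never specify where the braid axis goes, why the loop count equals the initial strand count, or why each monogon actually removes a strand without disturbing the others, so as written it is not a proof. (Your side worry about $n(K)$ being defined via a minimal projection is not an obstruction: Lemma \ref{monogonlemma} applies to every nontrivial \"ubercrossing projection, in particular to a minimal one, so $n(K)\leq\ddot{u}(K)-3$ holds directly.) I recommend abandoning the direct construction and routing the argument through $\beta(K)\leq\frac{\alpha(K)}{2}$ as above.
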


\begin{proof}
In \cite{Crom}, Cromwell proves that $\beta(K)\leq\frac{\alpha(K)}{2}$. By Theorem \ref{nestinglemma}, $\alpha(K)\leq\ddot{u}(K)+n(K)$. Lemma \ref{monogonlemma} implies that $n(K)\leq\ddot{u}(K)-3$. Thus, $\beta(K)\leq\ddot{u}(K)-\frac{3}{2}.$ Since $\beta(K)$ is an integer, we have 
$\beta(K)\leq\ddot{u}(K)-2$.
\end{proof}

\begin{remark} We mention in passing that an arc presentation with $\alpha$ pages can be represented by a \emph{grid diagram}, which presents the knot as a set of horizontal and vertical line segments connecting a set of points on the intersections of a square grid, such that every row and column of the grid contains exactly one segment and vertical segments cross over horizontal segments. The length of a given horizontal or vertical segment is the number of rows or columns, respectively, that it spans. Grid diagrams have become very important in the calculation of knot homologies. Petal projections yield very specific grid diagrams.

Given a petal arc presentation $P$ with $p$ pages, one can show that its associated grid diagram has the following properties:

\begin{enumerate}
\item There is exactly one vertical stick whose adjacent horizontal sticks point in opposite directions -- one points to the left of the vertical stick, and the other points to the right. We call this stick the \emph{inflection stick}, and denote it $I$. The horizontal sticks adjacent to $I$ have length $\frac{p-1}{2}$. 
\item Each remaining vertical stick's adjacent horizontal sticks have length $\frac{p+1}{2}$ and $\frac{p-1}{2}$. 
\end{enumerate} 

\end{remark}


\section{Bounds}

\subsection{2-Bridge and Pretzel Knots}

We now obtain upper bounds on $\ub(K)$ for several classes of knots, and determine the petal numbers of twist knots and certain other rational knots, including 2-braids. To this end, we develop moves which can be applied locally in diagrams to consolidate crossings. As a useful notation we introduce the notion of an \emph{\"ubertangle}, an example of which is shown in Figure \ref{fig:ubertangle}.

\begin{figure}[here]
\centering
\includegraphics[height=25mm]{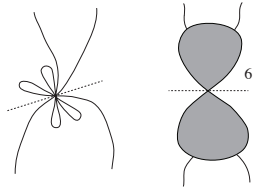}
\caption{The 6-crossing structure on the left can be represented by the Type 1 \"ubertangle on the right.}
\label{fig:ubertangle}
\end{figure}
\qquad

The center of an \"ubertangle represents a crossing; the dark regions, or \emph{lobes} hereafter, contain the same number of strands so that a strand can be passed over or under area indicated by the dotted line in Figure \ref{fig:ubertangle}, splitting the lobes from one another, and it will remain a legal crossing according to our definition. A number next to the center of an \"ubertangle indicates the multiplicity of the crossing. Initially we concern ourselves with two-lobe \"ubertangles of the three types shown in Figure \ref{fig:ubertangles}. Later it will be convenient to work with \"ubertangles of more than two lobes; this notation indicates that any straight path across the crossing that separates half the lobes from the other half also bisects the crossing. As a last convenient notation, we define a \emph{petaltangle}, which is an \"ubertangle with no nesting loops in its lobes. The lobes are drawn as triangles (see Figure \ref{fig:petaltangle}). A strand leaving a triangular lobe in a petaltangle lies on the the indicated side of the loops in the lobe.

\begin{figure}[here]
\centering
	\begin{subfigure}[b]{.2\textwidth}
		\centering
		\includegraphics[height=27mm]{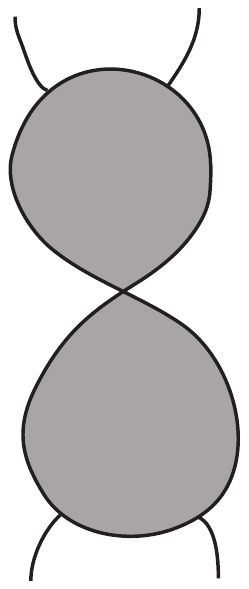}
		\label{fig:type_1_ubertangle}\caption{Type 1.}
		\qquad
	\end{subfigure}
	\begin{subfigure}[b]{.2\textwidth}
		\centering
		\includegraphics[height=25mm]{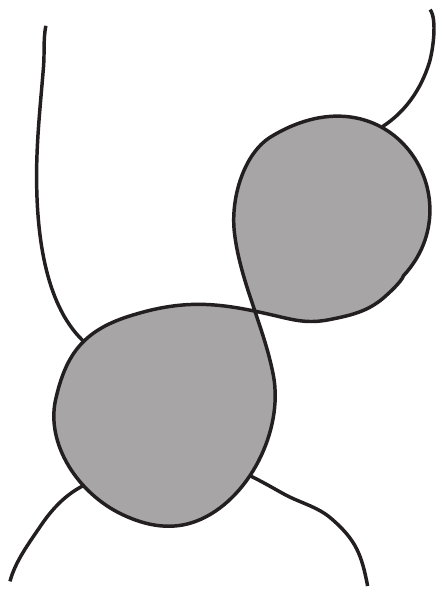}
		\label{fig:type_2_ubertangle}\caption{Type 2.}
		\qquad
	\end{subfigure}
	\begin{subfigure}[b]{.2\textwidth}
		\centering
		\includegraphics[height=15mm]{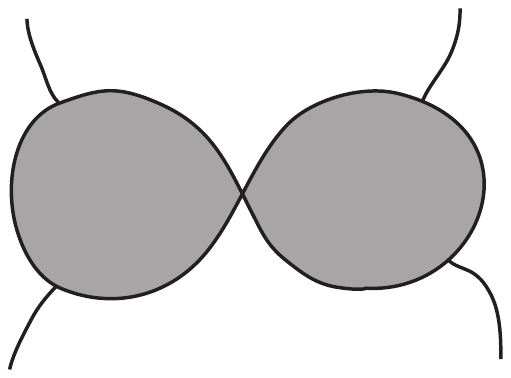}
		\label{fig:type_3_ubertangle}\caption{Type 3.}
		\qquad
	\end{subfigure}
	\caption{Types of \"ubertangles.}\label{fig:ubertangles}
\end{figure}

\begin{figure}[!h]
\centering
\includegraphics[height=30mm]{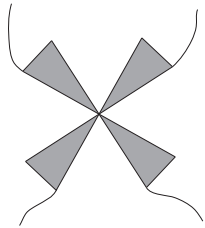}
\caption{Petaltangle.}
\label{fig:petaltangle}
\end{figure}

\begin{lemma}\label{move1}
(Moves 1a, 1b) A sequence of twists above a Type 1 \"ubertangle may be simplified as in Figure \ref{fig:move1}.
\end{lemma}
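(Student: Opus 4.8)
The plan is to verify the two moves by a direct local isotopy argument, treating the Type 1 \"ubertangle as a black box with the single structural property guaranteed by its definition: a strand may be pushed across the crossing along the dotted line so as to separate the two lobes while keeping the crossing legal (each strand still bisecting it). Since a ``sequence of twists above'' the \"ubertangle is built from the two lobes being cabled together and then twisted some number of times in the region directly above the central crossing, the entire configuration is determined up to isotopy by (i) the multiplicity $m$ of the central crossing, (ii) the number $k$ of twists, and (iii) the sign of the twists. So the content of the lemma is purely combinatorial: I need to show that the diagram with $k$ twists above the \"ubertangle is ambient isotopic in the plane-with-crossing-data to the diagram asserted on the right of Figure \ref{fig:move1}, for each of the two cases 1a and 1b.

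The key steps, in order, would be: first, isolate the twist region and observe that each full twist of the two lobes about one another is, after the splitting isotopy, nothing but a pair of strands winding around each other; this is a standard Reidemeister II/curl picture once the lobes are pulled apart along the dotted line. Second, push this twisted band of strands down \emph{through} the central crossing using the defining splitting property of the Type 1 \"ubertangle — this is the step where the ``Type 1'' hypothesis is essential, since it is exactly what licenses passing strands across the crossing without changing the crossing's legality or the knot type. Third, observe that once the twists have been pushed through, they either cancel in pairs (the Move 1a case, where opposite-sign contributions annihilate) or accumulate into the simplified form on the other side of the crossing (the Move 1b case), matching Figure \ref{fig:move1}. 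Throughout, one checks that the over/under data at the central crossing is respected: the strand being passed goes entirely over or entirely under the lobe region, so no illegal (non-bisecting) crossing is ever created and the multiplicity changes only in the controlled way depicted.

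The main obstacle I expect is bookkeeping rather than conceptual: one must be careful that when the twisted strands are passed through the \"ubertangle, they re-emerge on the correct side of \emph{all} the loops sitting inside each lobe, and that the resulting object is still a legitimate \"ubertangle of the claimed type and multiplicity — in particular that ``any straight path separating half the lobes from the other half bisects the crossing'' is preserved. This is where a careful appeal to the definition (and to Figure \ref{fig:ubertangle}'s dotted-line splitting) does the real work, and where an explicit small case, say $m$ small with a single twist, should be drawn to pin down the conventions before stating the general move. I would present the argument with the figure doing most of the explaining, as is standard for local tangle moves, and relegate the verification that no crossing becomes illegal to a one-sentence remark invoking the \"ubertangle definition.
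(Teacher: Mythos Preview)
Your proposal misreads what Moves 1a and 1b actually assert, and as a result the proof sketch targets the wrong statement. In Move 1a the twists do \emph{not} ``cancel in pairs'': a Type~1 \"ubertangle of multiplicity $m$ with $2n$ twists above it becomes a Type~1 \"ubertangle of multiplicity $m+2n$. Likewise Move 1b does not merely relocate accumulated twists; it absorbs $2n+1$ twists to produce multiplicity $m+2n+1$ \emph{and changes the \"ubertangle from Type~1 to Type~2}. Your step three (``opposite-sign contributions annihilate'' versus ``accumulate on the other side'') describes neither outcome, and the omission of the type change in the odd case means the 1b argument cannot close.

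The mechanism is also not ``push the twisted band through the crossing.'' The actual local move (Figure~\ref{fig:move_1a_prelim} in the paper) takes a single strand at the lowest twist crossing, drags it down alongside the existing lobe so that it becomes one more strand through the central crossing and one more loop in the lobe; this raises the multiplicity by one per absorbed twist. The paper organizes this as a short induction: for 1a, assume $2n$ twists can be absorbed, then handle two more via the elementary picture; for 1b, apply 1a to the bottom $2n$ twists and fold the single remaining crossing over the \"ubertangle, which is exactly what converts Type~1 to Type~2. Your splitting-along-the-dotted-line idea is the right ingredient for that elementary picture, but you need to reformulate the argument around increasing multiplicity (not cancellation) and track the type change in the odd case.
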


\begin{figure}[here]
\centering
\begin{subfigure}[b]{.2\textwidth}
\includegraphics[height=30mm]{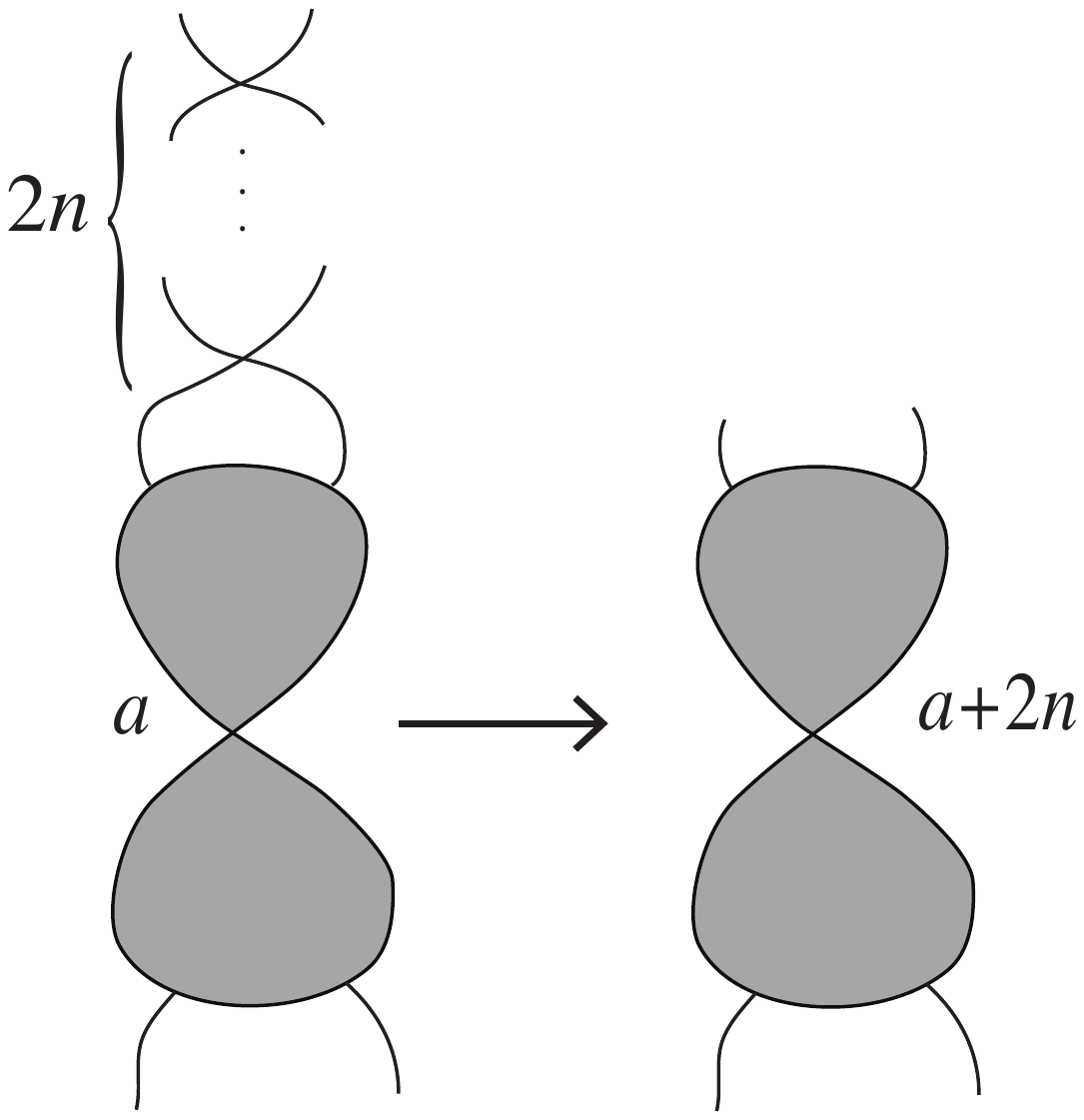}
\caption{Move 1a.}
\label{fig:move_1a}
\qquad
\end{subfigure}
\begin{subfigure}[b]{.2\textwidth}
\includegraphics[height=30mm]{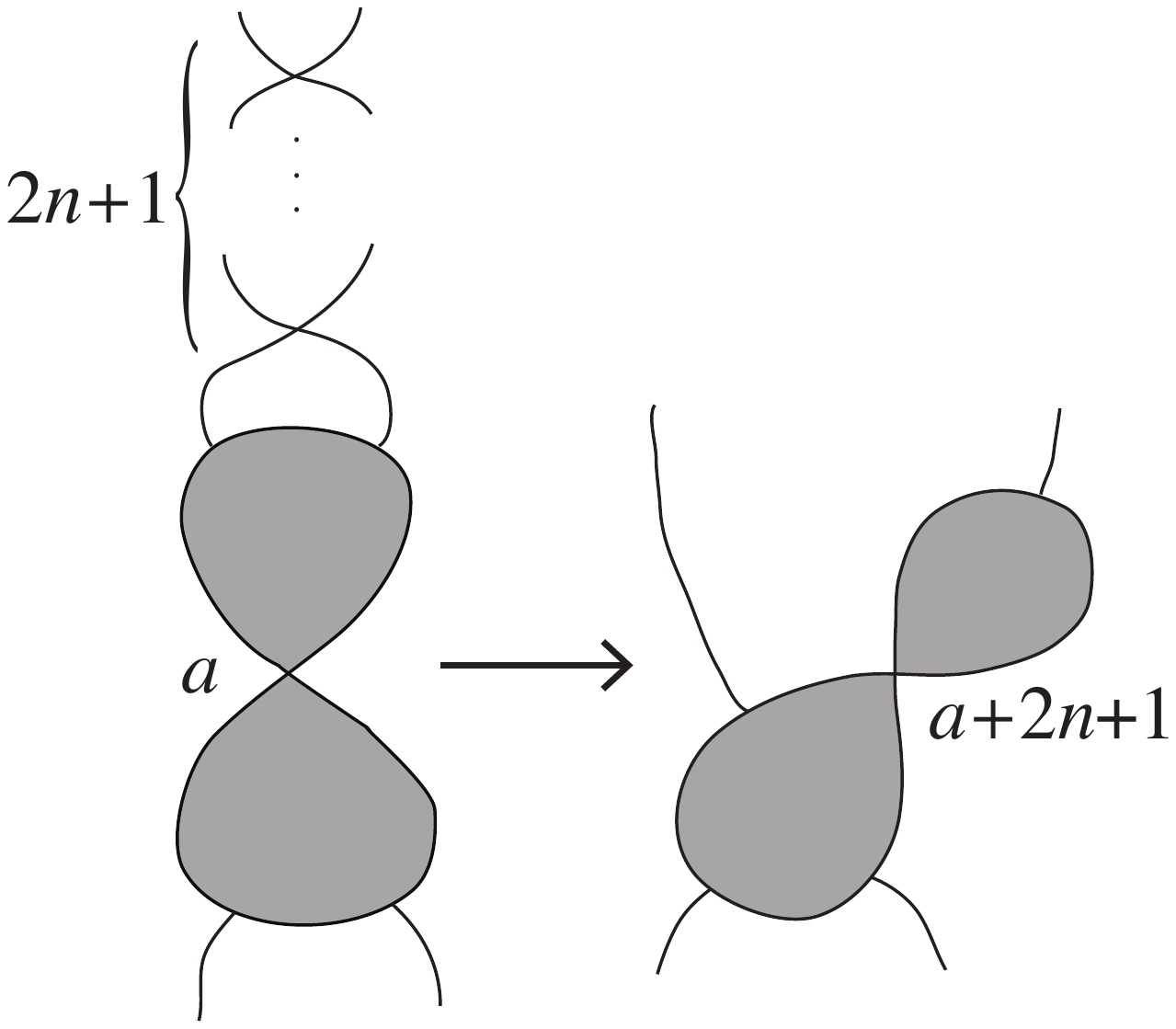}
\caption{Move 1b.}
\label{fig:move_1b}
\qquad
\end{subfigure}
\caption{Moves on \"ubertangles.}
\label{fig:move1}
\end{figure}
\qquad

\begin{proof}
We first show Move 1a is valid by induction on the number of twists above the Type 1 \"ubertangle. For 0 twists, the claim holds. Assuming the claim holds for $2n$ twists above the \"ubertangle, consider a Type 1 \"ubertangle with $2n+2$ twists above it. Apply the inductive hypothesis to the lowermost $2n$ crossings, and then eliminate the remaining 2 crossings as shown in Figure \ref{fig:move_1a_prelim}.

\begin{figure}[here]
\centering
\includegraphics[height=25mm]{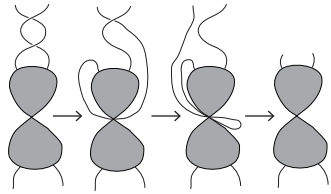}
\caption{Justification for the moves on \"ubertangles.}
\label{fig:move_1a_prelim}
\end{figure}
\qquad

For move 1b, take a Type 1 \"ubertangle with $2n+1$ twists above it. Reduce the first $2n$ using Move 1a, and then fold the last crossing over the \"ubertangle, changing it from Type 1 to Type 2.
\end{proof}


\begin{lemma}
(Moves 1a$'$, 1a$''$) A sequence of twists with an even number of crossings above a petaltangle may be reduced as shown in Figure \ref{fig:modified_1a}.
\end{lemma}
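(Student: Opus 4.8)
The plan is to mimic the proof of Lemma~\ref{move1} (Moves 1a, 1b), but now working with petaltangles, where the extra bookkeeping is which side of the triangular lobe each strand exits. The statement ``Moves 1a$'$, 1a$''$'' suggests there are two sub-cases of reducing an even sequence of twists above a petaltangle, presumably distinguished by whether the twisting strand enters and leaves on the same side of the lobe or on opposite sides; equivalently, by the parity relationship between the twists being simplified and the structure already present in the lobe. So first I would set up notation: label the two strands coming out the top of the petaltangle, and record, for each, which side of which triangular lobe it is attached to (the convention from Figure~\ref{fig:petaltangle} that a strand leaving a triangular lobe lies on the indicated side of the loops inside that lobe).

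Next, I would run the same induction on the number $2n$ of crossings above the petaltangle that was used for Move 1a. The base case $n=0$ is trivial. For the inductive step, I would peel off the top two crossings, apply the inductive hypothesis to the remaining $2n-2$, and then absorb those last two crossings into the lobe. The key geometric point — the analogue of Figure~\ref{fig:move_1a_prelim} — is that a pair of consecutive twists can be slid down and folded into a triangular lobe without creating any nesting loop, precisely because a petaltangle allows a strand to be passed over or under the lobes along the dotted-line region (this is the defining property of \"ubertangles/petaltangles stated before Lemma~\ref{move1}), and folding in an even number of crossings returns the strand to the correct side of the lobe. The two variants 1a$'$ and 1a$''$ correspond to the two possible starting configurations (twisting strand exiting the lobe on the left versus the right, or entering/leaving on matching versus opposite sides), and the induction is the same in both, only the resulting picture differs in orientation.

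The main obstacle I anticipate is verifying the \emph{no-nesting} condition is preserved at every stage: when the two crossings get folded into the triangular lobe, one must check that the newly created loop is innermost (not nesting), so that the result is genuinely a petaltangle and not merely an \"ubertangle. This requires tracking the cyclic order in which strands emanate from the crossing and confirming that the fold is performed over (or under) \emph{all} existing loops in the lobe rather than some of them — exactly the subtlety that forces the ``even'' hypothesis in the lemma, since an odd fold would land the strand between existing loops and create a nesting loop (that case is handled separately, presumably by a Move~1b$'$ analogue that changes the petaltangle type, just as Move 1b converted Type 1 to Type 2). Once the bookkeeping of sides and the innermost-loop check are in place, the induction closes and the two figures in Figure~\ref{fig:modified_1a} are justified.
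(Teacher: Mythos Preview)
Your proposal is correct and matches the paper's approach exactly: the paper's entire proof is the single sentence ``The proof mirrors that of Lemma~\ref{move1},'' i.e.\ the same induction on the number of twists used for Move~1a. Your added bookkeeping about sides of the triangular lobes and the no-nesting check is the appropriate elaboration of what ``mirrors'' means in the petaltangle setting, but it is not a different argument.
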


\begin{figure}[here]
\centering
	\begin{subfigure}[b]{.3\textwidth}
		\centering
		\includegraphics[height=30mm]{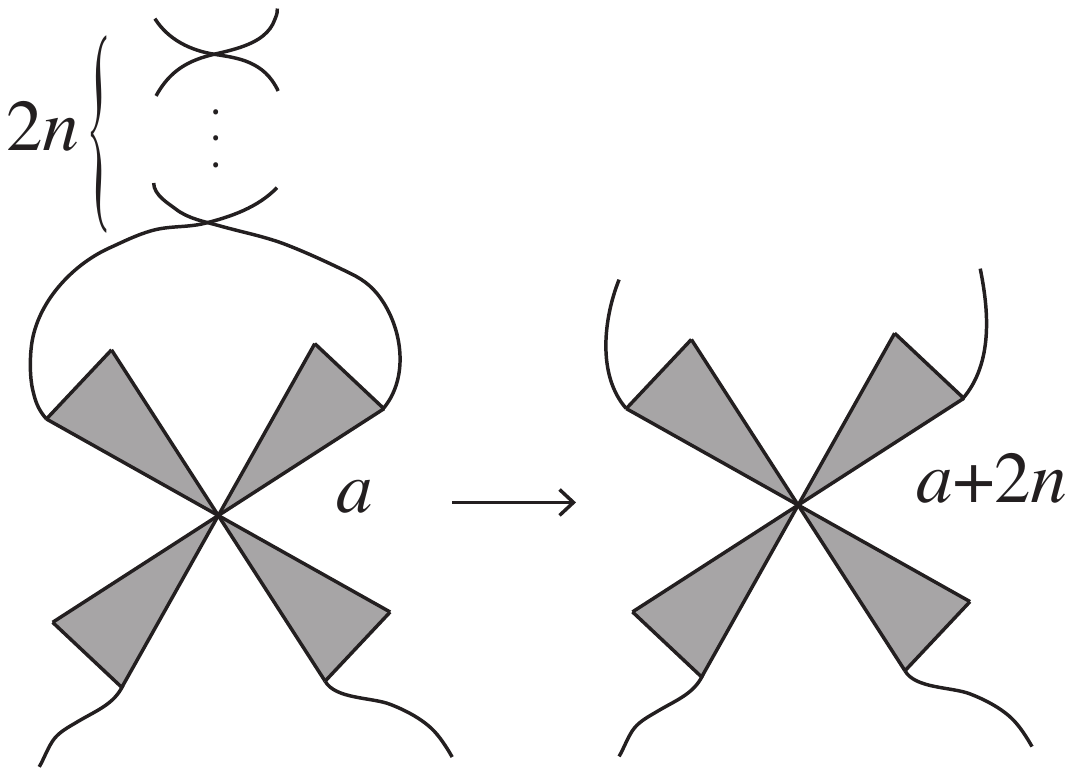}
		\caption{}\label{1aprime}
		\qquad
	\end{subfigure}
	\begin{subfigure}[b]{.3\textwidth}
		\centering
		\includegraphics[height=30mm]{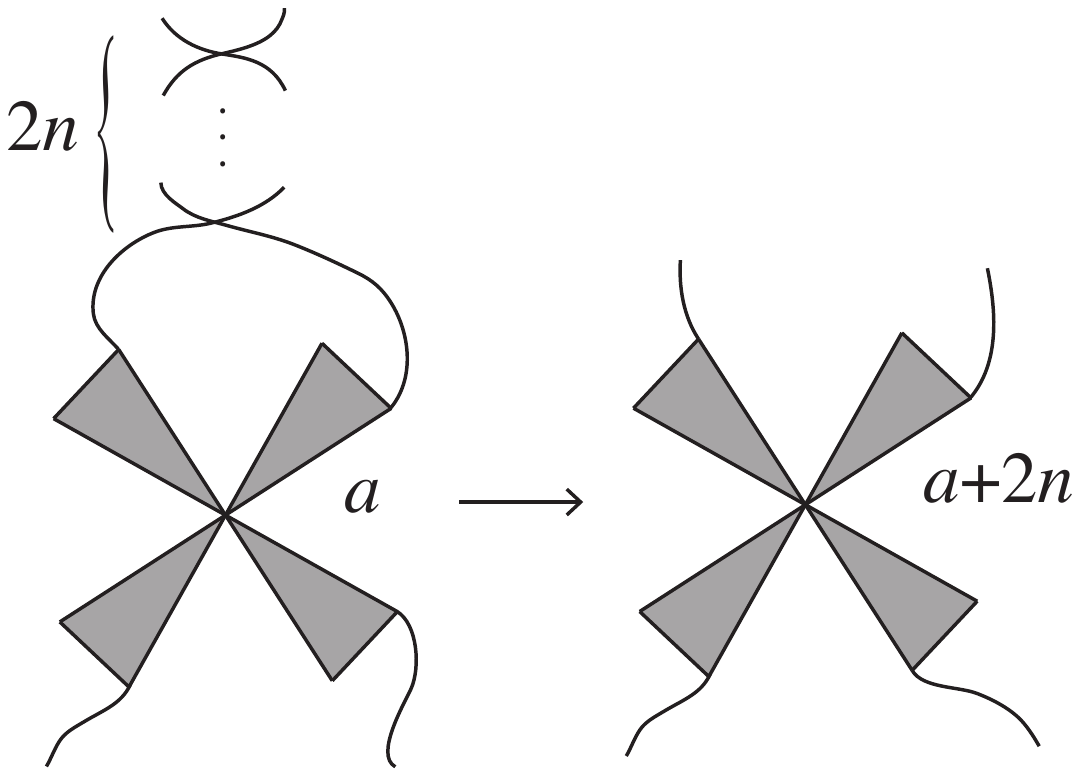}
		\caption{}\label{1aprimeprime}
		\qquad
	\end{subfigure}
\caption{Moves on petaltangles.}	
\label{fig:modified_1a}
\end{figure}
\qquad

\begin{proof}
The proof mirrors that of Lemma \ref{fig:move1}.
\end{proof}


\begin{lemma}
(Moves 2a, 2b) A sequence of twists above a Type 2 \"ubertangle may be simplified as shown in Figure \ref{fig:move_2}.

\begin{figure}[here]
\centering
\begin{subfigure}[b]{.3\textwidth}
\centering
\includegraphics[height=25mm]{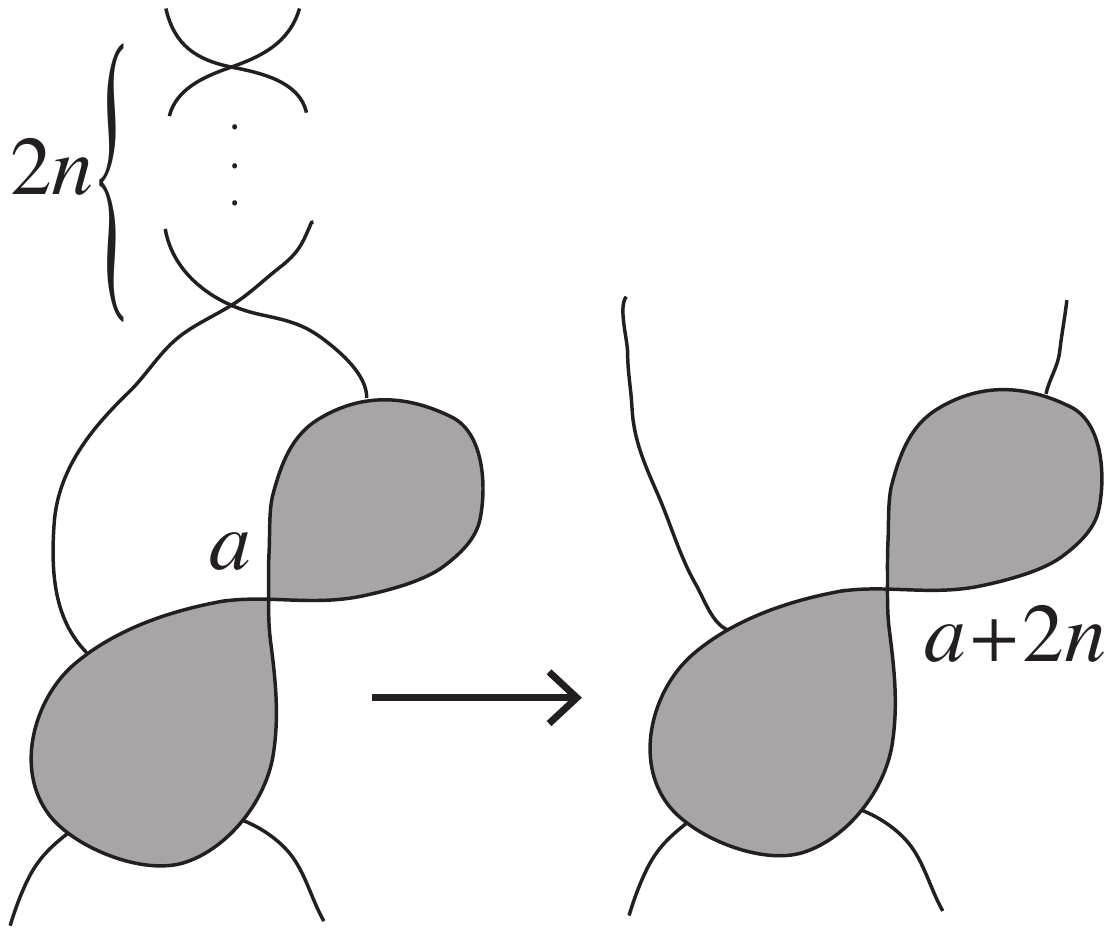}
\caption{Move 2a.}
\label{fig:move_2a}
\qquad
\end{subfigure}
\begin{subfigure}[b]{.3\textwidth}
\centering
\includegraphics[height=25mm]{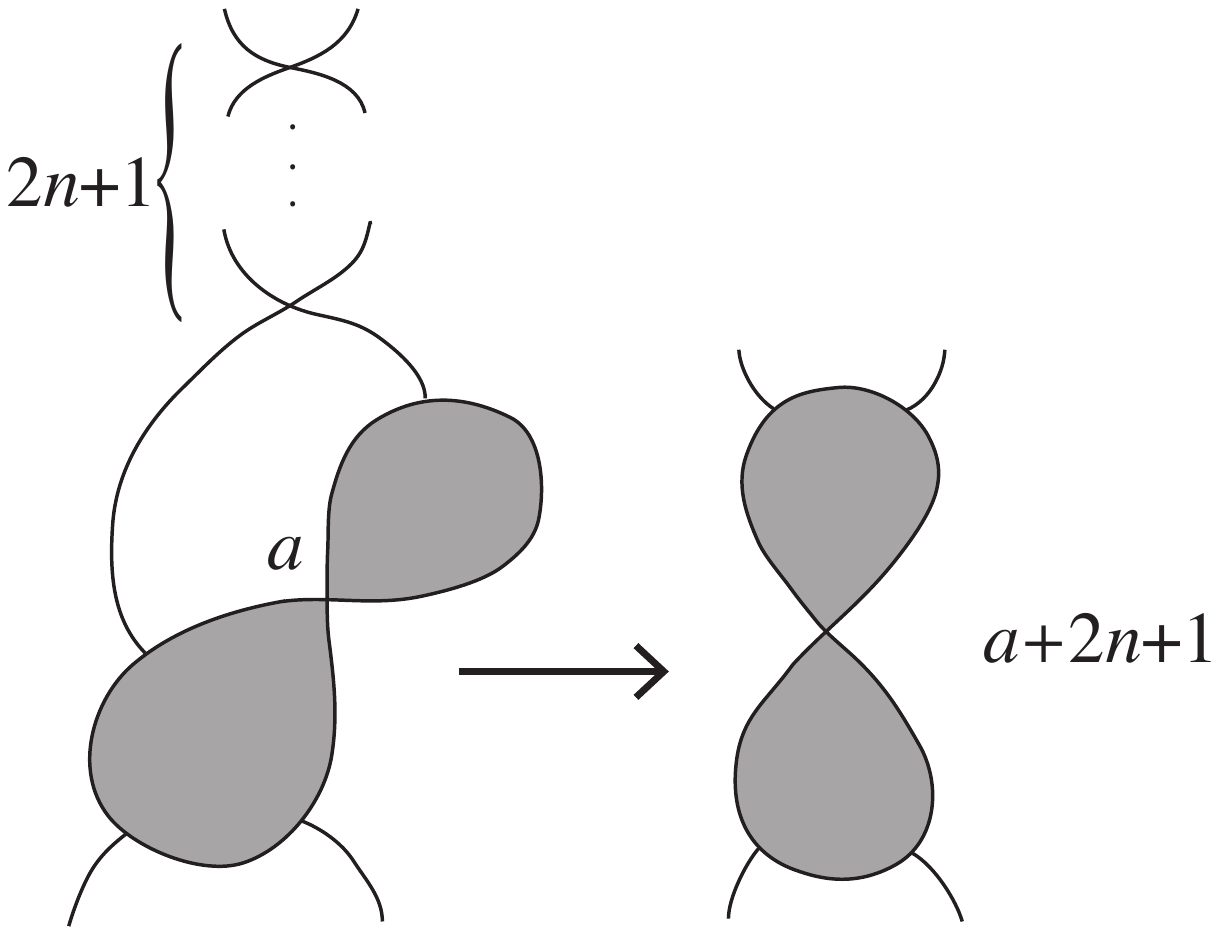}
\caption{Move 2b.}
\label{fig:move_2b}
\qquad
\end{subfigure}
\caption{Additional moves on \"ubertangles.}
\label{fig:move_2}
\quad
\end{figure}
\end{lemma}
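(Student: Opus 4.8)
The plan is to follow the template of the proof of Lemma~\ref{move1} almost verbatim, replacing the Type 1 \"ubertangle by a Type 2 one and checking that each elementary step survives the change of type. As there, the two moves are distinguished by the parity of the number of crossings in the twist region sitting above the \"ubertangle: Move 2a handles an even number $2n$ of twists, and Move 2b an odd number $2n+1$. So the first thing I would do is isolate these two cases and reduce Move 2b to Move 2a.

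I would establish Move 2a by induction on $n$. The base case $n=0$ is the assertion that a Type 2 \"ubertangle with nothing above it already equals the claimed reduced form, which holds by definition. For the inductive step, apply the inductive hypothesis to the lowermost $2n$ crossings of the twist region, leaving a Type 2 \"ubertangle with exactly two crossings stacked above it; these two crossings are then removed by the Type 2 analogue of the cancellation depicted in Figure~\ref{fig:move_1a_prelim}. Concretely one slides the two strands emanating from the top crossing around the folded lobe of the Type 2 \"ubertangle; since the two lobes carry equal numbers of strands, the dotted separating path of Figure~\ref{fig:ubertangle} guides this isotopy so that the output is again a legal crossing in which every strand bisects the \"ubercrossing, and one reads off from the local picture that it is the right-hand side of Figure~\ref{fig:move_2a}.

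For Move 2b I would first invoke Move 2a on the bottom $2n$ crossings, reducing to a Type 2 \"ubertangle with a single crossing above it, and then fold that last crossing over the \"ubertangle. In the Type 1 setting this fold converted a Type 1 \"ubertangle into a Type 2 one; carrying out the same fold here converts the Type 2 \"ubertangle into a Type 3 one (compare the local pictures in Figure~\ref{fig:ubertangles}), producing exactly the right-hand side of Figure~\ref{fig:move_2b}. This completes both moves.

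The main obstacle, exactly as in the proof of Lemma~\ref{move1}, is not the bookkeeping of the induction but the verification that the two-crossing cancellation and the final fold are honest ambient isotopies which keep every crossing a legitimate multi-crossing -- that is, that no strand is forced to turn at the \"ubercrossing and that the lobes of the \"ubertangle stay free of unwanted nesting throughout. This is a picture-checking task: once the Type 2 version of Figure~\ref{fig:move_1a_prelim} is drawn and seen to be a valid isotopy, the remainder of the argument is formally identical to the Type 1 case.
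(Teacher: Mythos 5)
Your overall strategy is workable, but it is not the paper's: the paper does not re-run the induction in the Type 2 setting at all. Its proof is a one-step reduction to Lemma \ref{move1}: stretch the bottommost top strand of the twist region over the \"ubertangle, which converts the Type 2 \"ubertangle into a Type 1 \"ubertangle, and then simply invoke Move 1a or 1b according to the parity of the remaining twists. This buys two things your version does not. First, it avoids having to verify a new local cancellation: your inductive step rests on an asserted ``Type 2 analogue'' of Figure \ref{fig:move_1a_prelim}, and since the entire content of these lemmas is the validity of such local isotopies (the folded-over lobe of a Type 2 \"ubertangle could in principle obstruct the slide that works for Type 1), this is precisely the part that cannot be waved away as picture-checking -- it is the whole proof. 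The paper's unfolding trick makes only one new picture necessary, and that picture is essentially the inverse of the fold already used in Move 1b.

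Second, and more concretely, your treatment of Move 2b appears to land on the wrong diagram. You fold the final crossing over the Type 2 \"ubertangle and declare the result to be a Type 3 \"ubertangle equal to the right-hand side of Figure \ref{fig:move_2b}. But the right-hand sides of Moves 2a and 2b are, by the paper's construction, the outputs of Moves 1a and 1b, i.e.\ Type 1 and Type 2 \"ubertangles respectively; nothing in the paper identifies the fold of a crossing over a Type 2 \"ubertangle with a Type 3 \"ubertangle (Type 3 is the configuration for which absorbing a crossing costs \emph{two} in multiplicity, per Move 3). If Move 2b really terminated in a Type 3 \"ubertangle, the crossing-count bookkeeping in Theorem \ref{rationalknotthm} -- which relies on Moves 1 and 2 adding one to the multiplicity per crossing and reserves the cost-two behavior for Move 3 -- would not go through as stated. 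So either supply the missing isotopy showing your endpoint agrees with Figure \ref{fig:move_2b}, or, more simply, adopt the paper's reduction to the Type 1 case.
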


\begin{proof}
In the string of twists above the Type 2 \"ubertangle, stretch the bottommost top strand over the \"ubertangle, changing it from Type 2 to Type 1. Then apply Move 1a or 1b depending on the parity of the number of twists.
\end{proof}


\begin{lemma}
(Move 3) A sequence of twists above a Type 3 \"ubertangle may be simplified as in Figure \ref{move_3}.

\begin{figure}[here]
\centering
\includegraphics[height=25mm]{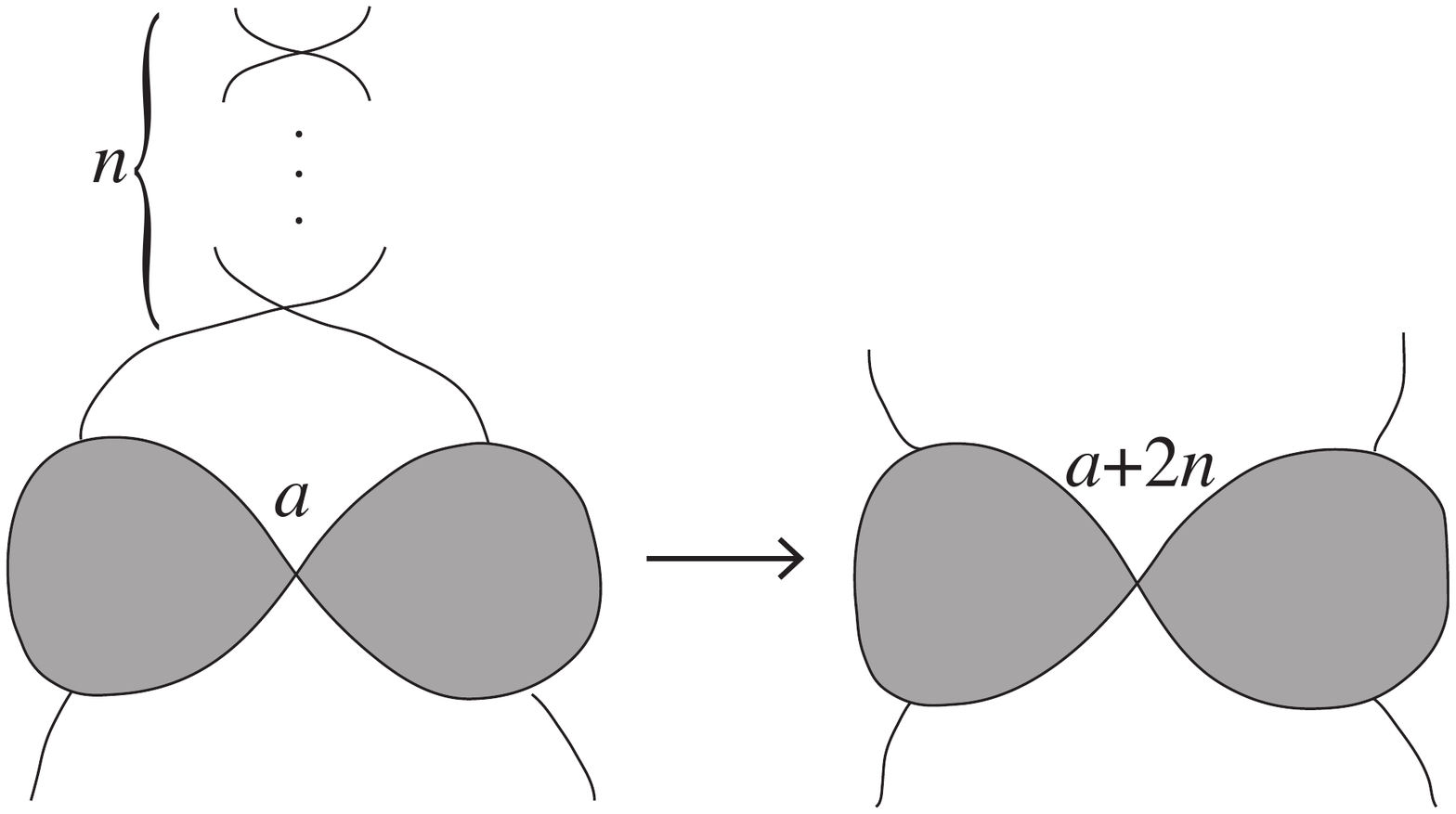}
\caption{Move 3.}
\label{move_3}
\end{figure}
\end{lemma}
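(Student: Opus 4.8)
The plan is to reduce Move 3 to the moves already established for Type 1 and Type 2 \"ubertangles, in exactly the way Move 2 was reduced to Move 1.

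First I would locate, in a Type 3 \"ubertangle with a sequence of twists stacked above it, the two strands that run from the central crossing up into the twist region. The key fact is the defining property of an \"ubertangle: a lobe may be separated from the crossing by passing a strand over (or under) the dotted region, and the crossing stays legal. Using this, I would slide the relevant lobe over the crossing --- equivalently, fold the bottommost strand of the twist region down past the \"ubertangle --- which converts the Type 3 \"ubertangle into a Type 1 or Type 2 \"ubertangle. This is an ambient isotopy supported near the \"ubertangle; it creates no new crossings, leaves every crossing of the twist region intact, and keeps each strand bisecting the \"ubercrossing.

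With the twist region now sitting above a Type 1 or Type 2 \"ubertangle, I would finish by invoking Lemma \ref{move1} (Moves 1a, 1b) together with Moves 2a and 2b, splitting into the even and odd cases for the number of twists just as in those proofs, to absorb the twists into the \"ubertangle. Tracking the type changes through this chain and reassembling yields exactly the picture in Figure \ref{move_3}.

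The hard part will be the first isotopy: I must check that the lobe can genuinely be folded over the crossing with its strands remaining straight through it, so that no strand of the twist region gets dragged across the \"ubertangle in a way that introduces a crossing or violates the bisection condition. Verifying that local configuration --- in the spirit of the figures accompanying Moves 1 and 2 --- is the real content; everything afterward is the same bookkeeping used there.
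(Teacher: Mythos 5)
Your plan is not the paper's argument, and the step you flag as ``the hard part'' is exactly where it fails. The paper proves Move 3 directly: a single crossing sitting above a Type 3 \"ubertangle is absorbed into the central multi-crossing at the cost of adding \emph{two} to its multiplicity, and the result is again a Type 3 \"ubertangle, so the operation iterates over the whole twist region (Figure \ref{fig:move_3_prelim}). No conversion to Type 1 or Type 2 occurs, and none is possible without cost.

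The obstruction to your reduction is quantitative and is visible elsewhere in the paper. Moves 1 and 2 absorb each twist at a cost of one added strand, whereas Move 3 costs two per crossing; the proof of Theorem \ref{rationalknotthm} relies on precisely this discrepancy, taking pains to avoid Move 3 because it is strictly more expensive. If a Type 3 \"ubertangle could be turned into a Type 1 or Type 2 one by a cost-free local isotopy, Move 3 would be redundant and the $2c(K)-1$ bound for 2-bridge knots would collapse to roughly $c(K)$. The types are distinguished by how the two free strands running from the multi-crossing into the twist region sit relative to the lobes; the ``split the lobes by a line across the dotted region'' property you invoke certifies that the central singularity remains a legal multi-crossing, but it does not license rerouting those free ends around a lobe, and doing so is what forces extra strands through the crossing. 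Moreover, even if your folding were achievable, applying Moves 1 and 2 afterwards would produce a Type 1 or Type 2 \"ubertangle with multiplicity increased by one per crossing --- not the Type 3 \"ubertangle with multiplicity increased by two per crossing shown in Figure \ref{move_3} --- so you would have proved a different statement from the lemma. The correct proof is the one-crossing-at-a-time absorption sketched above.
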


\begin{proof}
We see in Figure \ref{fig:move_3_prelim} that a crossing above a Type 3 \"ubertangle may be absorbed at the price of adding two to the multiplicity of the crossing in the center of the \"ubertangle. We simply iterate this operation to obtain Move 3.
\begin{figure}[here]
\centering
\includegraphics[height=15mm]{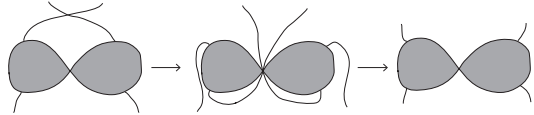}
\caption{Justification for Move 3.}
\label{fig:move_3_prelim}
\end{figure}
\end{proof}


\begin{thm}
If $K$ is a 2-braid knot or link, $\ub(K)\leq c(K)+1$.
\end{thm}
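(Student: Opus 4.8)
The plan is to take the standard diagram of $K$ as a closed 2-braid and reorganize its twist region into a single multi-crossing, then read off the number of loops. Since $\ub$ and $c$ are unchanged under mirror images, we may assume $K$ is the closure of $\sigma_1^n$ with $n>0$; assuming $K$ is nontrivial forces $n\ge 2$, and then the standard closed-braid diagram is reduced alternating, so $c(K)=n$. Note that $K$ is a knot exactly when $n$ is odd. The goal is therefore to exhibit an \"ubercrossing projection of $K$ with at most $n+1$ loops.

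For the construction, draw $K$ as a vertical twist region of $n$ crossings together with the two braid-closure arcs. Pulling these closure arcs around the ends of the twist column presents them as the two lobes of an \"ubertangle — one of the three basic types of Figure~\ref{fig:ubertangles}, which after a small isotopy (or a fold of the kind used in the proof of Lemma~\ref{move1} and in the Move~2 lemma) we arrange to be of Type~3, the type to which Move~3 applies — and the $n$ crossings of the twist region then form a string of twists above this base \"ubertangle. We now consolidate: a controlled number of the twists are curled into the (crossing-free) lobes of the base \"ubertangle, and the remaining twists are swallowed into its center by repeated use of Move~3, each absorbed crossing raising the multiplicity of the central crossing. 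Once the twist region is exhausted we are left with a single multi-crossing around which the closure arcs run as loops; projecting down the axis through that crossing gives an \"ubercrossing projection of $K$. Distributing the $n$ crossings between ``curled into a lobe'' and ``absorbed by Move~3'' so that the central multiplicity comes out to exactly $n+1$ — the split being dictated by the parity of $n$ — yields $\ub(K)\le n+1=c(K)+1$.

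The heart of the matter, and the step I expect to be most delicate, is this count. Move~3 raises the central multiplicity by $2$ for each crossing it absorbs, so naively pushing the whole twist region into the center only gives a bound of order $2n$; the sharp bound $n+1$ requires choosing the reorganization of the standard diagram carefully before the moves are applied, so that only about half of the crossings are absorbed this way and the rest are accounted for in the lobes of the base \"ubertangle or cancel in pairs via the Move~1 and Move~2 lemmas. The exact arithmetic is parity-sensitive, so the knot case ($n$ odd) and the link case ($n$ even) — together with the initial case $n=2$ — should be handled separately. One must also check that the final diagram is a legitimate \"ubercrossing projection: that every strand bisects the central crossing and that no strands pass through it beyond those counted. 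This is the same kind of verification made in the proof of Theorem~\ref{thm:uberalgorithm}, and the explicit rotation map $\varphi$ constructed there can be adapted to make the reorganization above rigorous.
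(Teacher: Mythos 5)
There is a genuine gap here: you have picked the wrong consolidation move, and the device you invoke to repair the resulting factor of two is not a legitimate operation. Move~3 absorbs each twist at the cost of adding $2$ to the multiplicity of the central crossing, so pushing the twist region into a Type~3 \"ubertangle can only give a bound of roughly $2c(K)$, as you note. Your proposed fix --- ``curling a controlled number of the twists into the (crossing-free) lobes of the base \"ubertangle'' --- does not correspond to any of the moves established in Section~4, and it cannot: the lobes of an \"ubertangle are by definition bundles of parallel strands containing no crossings, so a crossing stashed in a lobe would survive into the final diagram, which would then fail to be an \"ubercrossing projection. No amount of parity bookkeeping rescues the count once you have committed to Move~3, because every crossing that actually reaches the central multi-crossing via that move contributes $2$ to the multiplicity.

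The paper's argument is a one-liner precisely because it uses Move~1a instead. The closure arcs of the $2$-braid naturally present the bottom of the twist column as a Type~1 \"ubertangle (this is the configuration of Figure~\ref{fig:ubertangle}), with the remaining crossings forming a sequence of twists above it; Move~1a (or 1b, depending on the parity of the number of twists) absorbs each such crossing while adding only $1$ to the multiplicity, exactly as used again in the proof of Theorem~\ref{rationalknotthm} (``Moves 1 and 2 add one to the multiplicity of the \"ubertangle for each crossing, while Move 3 adds two''). Starting from multiplicity $2$ for the bottommost crossing and absorbing the other $c(K)-1$ crossings yields a single crossing of multiplicity $c(K)+1$. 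Your detour through Type~3 discards the $+1$-per-crossing move that makes the sharp bound immediate; switching to Move~1a removes both the arithmetic problem and the need for the undefined lobe-curling step.
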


\begin{proof}
Take a closed braid representation of $K$ with a minimal number of crossings. The result follows directly from the application of Move 1a.
\end{proof}


\begin{thm}\label{rationalknotthm}
If $K$ is a 2-bridge knot, $\ub(K) \le 2c(K)-1$. \end{thm}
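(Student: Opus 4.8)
The plan is to start from a reduced alternating diagram of $K$ presented in Conway normal form and to absorb its twist regions one at a time into a single growing multi-crossing, using the \"ubertangle moves of Lemma~\ref{move1} and Move 3.

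First I would put $K$ in Conway normal form: write it as a rational knot with continued fraction expansion $[a_1,a_2,\dots,a_k]$ having all $a_i>0$, so that the corresponding reduced alternating diagram has $k$ twist regions, alternately horizontal and vertical, with $c(K)=a_1+\cdots+a_k$ crossings in total (recall every $2$-bridge knot is alternating, so this count really is the crossing number). Using the continued-fraction identity $[\dots,a_k]=[\dots,a_k-1,1]$, realized by a planar isotopy of the diagram, I would arrange that the first twist region to be processed contains at least two crossings; if $k=1$ then $K$ is a $(2,a_1)$-torus knot with $a_1\geq 3$ and the preceding theorem already gives $\ub(K)\leq a_1+1\leq 2a_1-1=2c(K)-1$, so I may assume $k\geq 2$.

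Next I would set up a recursion that processes the twist regions from one end of the staircase. By the reasoning behind the preceding theorem (one application of Move 1a, plus a planar isotopy to account for how the end of the rational tangle is capped off within the diagram), the first twist region is equivalent to an \"ubertangle whose central crossing has multiplicity at most $a_1+1$. A planar isotopy then displays the adjacent, perpendicular twist region as a string of $a_2$ twists lying above this \"ubertangle, and I would absorb them with the relevant move among Moves 1a/1b, 2a/2b, 3, tracking the \"ubertangle type — with Moves 1b and 2b handling the type transitions. The quantitative point is that absorbing a string of twists above an \"ubertangle raises the multiplicity of its central crossing by at most two per crossing absorbed: this is precisely the content of Move 3, and the Type~1 and Type~2 moves are no worse. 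Iterating, after the $j$-th region is absorbed one isotopes so the $(j+1)$-st region again sits above the enlarged \"ubertangle, and repeats. Carrying this out through all $k$ regions produces an \"ubercrossing projection of $K$ whose multiplicity is at most
\[
(a_1+1)+2a_2+2a_3+\cdots+2a_k \;=\; 2\Bigl(\sum_{i=1}^{k}a_i\Bigr)-a_1+1 \;=\; 2c(K)-a_1+1 \;\leq\; 2c(K)-1,
\]
the last inequality using $a_1\geq 2$; hence $\ub(K)\leq 2c(K)-1$.

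I expect the main obstacle to lie in the recursion of the previous paragraph: verifying that the alternating horizontal/vertical staircase structure of a rational tangle really is compatible with iterating the \"ubertangle moves. One must check that after each absorption a planar isotopy genuinely places the next twist region in position as twists directly above the current \"ubertangle, that the \"ubertangle type is bookkept correctly (including the parity issues that force the occasional use of Move 1b or 2b and the precise way the end region of the tangle is closed up to start the recursion), and that the ``at most two per crossing'' bound applies uniformly along the whole staircase rather than only in the isolated local pictures of the moves. The remaining ingredients — the continued-fraction bookkeeping of Step~1 and the arithmetic above — are routine.
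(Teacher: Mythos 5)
Your proposal matches the paper's proof: both start from the alternating Conway normal form (staircase) diagram of the 2-bridge knot and absorb its twist regions one by one into a single growing \"ubertangle via Moves 1--3, with the first region contributing $a_1+1$ to the multiplicity and each subsequent crossing contributing at most $2$, giving $2c(K)-a_1+1\le 2c(K)-1$. The only (cosmetic) difference is the treatment of a first region with a single crossing: you renormalize the continued fraction so that the first-processed region has at least two crossings, whereas the paper instead applies Move 1 jointly to the bottommost twist and the adjacent bottommost-right sequence of twists.
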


\begin{proof}
Place $K$ in an alternating projection of the form shown in Figure \ref{fig:rational_knot_ubertangle}, which must realize $c(K)$.

\begin{figure}[here]
\centering
\includegraphics[height=40mm]{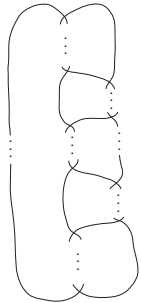}
\caption{An alternating projection of a 2-bridge knot.}
\label{fig:rational_knot_ubertangle}
\qquad
\end{figure}

Beginning at the bottom of the projection, we apply Moves 1-3 to create an \"ubertangle which we move incrementally up the projection, eliminating crossings. Moves 1 and 2 add one to the multiplicity of the \"ubertangle for each crossing, while Move 3 adds two. If parity considerations force us to use Move 3 at each stage then we have obtained an \"ubercrossing projection with multiplicity $2c(K)$. However, we can always use Move 1 on the bottommost sequence of twists unless it only contains one crossing, in which case we apply Move 1 to the bottommost twist as well as the bottommost right sequence of twists, giving the result.
\end{proof}


\begin{lemma}\label{arcindexlowerbound}
If $K$ is an alternating knot, then 
\[
p(K)\geq\begin{cases}
c(K)+2&\text{if $c(K)$ is odd}\\
c(K)+3&\text{if $c(K)$ is even.}
\end{cases}
\]
\end{lemma}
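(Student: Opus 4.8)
The plan is to combine the arc-index lower bound for alternating knots with the relation $\alpha(K) \le p(K)$ established earlier, and then refine it using a parity argument. First I would recall that any petal projection yields a petal arc presentation, so $\alpha(K) \le p(K)$. By Corollary 8 of \cite{BP}, an alternating (prime, non-split) knot $K$ satisfies $\alpha(K) = c(K) + 2$. Combining these gives $p(K) \ge c(K) + 2$ immediately, which already settles the odd case. The work is therefore concentrated in sharpening the bound by one when $c(K)$ is even, i.e.\ in ruling out $p(K) = c(K)+2$ in that case.

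For the parity refinement, the key observation is that every nontrivial petal projection has an \emph{odd} number of loops (this is noted just after Corollary~\ref{cor:petal}: a petal projection of a nontrivial knot must have an odd number of petals). So $p(K)$ is always odd for nontrivial $K$. If $c(K)$ is even, then $c(K)+2$ is even, so $p(K) = c(K)+2$ is impossible; the smallest admissible value is the next odd integer, $c(K)+3$. This is exactly the claimed bound. So the proof structure is: (1) establish $p(K) \ge \alpha(K) = c(K)+2$ via the petal arc presentation and the Bae–Park arc index formula; (2) invoke the parity constraint that $p(K)$ is odd; (3) conclude the even case by bumping up to $c(K)+3$.

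The main obstacle — and the point that needs the most care — is the passage from $p(K) \ge c(K)+2$ to the strict statement in the even case. One must be sure that the parity fact "a nontrivial petal projection has an odd number of loops" applies to \emph{any} petal projection of $K$, not merely to one produced by the algorithm of Corollary~\ref{cor:petal}; this is precisely why the paper records that fact as an intrinsic property of petal projections rather than of the construction. A clean way to see it: in a petal projection the single strand, followed once around, alternately passes to the inside and outside of successive petals relative to the crossing ordering, forcing the loop count to be odd for the strand to close up (equivalently, the permutation recording the crossing levels must be a single $n$-cycle with the appropriate parity, which fails for even $n$). Given that, and given that $\alpha = c + 2$ only requires $K$ to be a prime alternating knot (the hypothesis of the lemma should be read, or stated, with that caveat, since arc index is additive under connect sum and $c(K)+2$ is the right formula precisely for prime alternating diagrams), the argument closes. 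The only genuinely delicate point is making sure the two inputs — the arc-index equality and the oddness of petal number — are invoked under compatible hypotheses.
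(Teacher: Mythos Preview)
Your proposal is correct and follows essentially the same route as the paper: combine $p(K)\ge\alpha(K)$ (from the petal arc presentation) with the Bae--Park formula $\alpha(K)=c(K)+2$ for alternating knots, then use the oddness of $p(K)$ to improve the bound by one when $c(K)$ is even. The paper's proof is exactly this, stated in three lines; your additional remarks on the parity fact and the prime hypothesis are sound but go beyond what the paper spells out.
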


\begin{proof}
By \cite{BP}, we have $\alpha(K)=c(K)+2$ since $K$ is alternating. Moreover, a petal projection of $K$ with $n$ loops gives rise to an arc presentation of $K$ with $n$ arcs, so $p(K)\geq\alpha(K)$. Hence $p(K)\geq c(K)+2$. If $c(K)$ is even, then we have $p(K)\geq c(K)+3$ because $p(K)$ must be odd.
\end{proof}

\begin{thm}
If $B$ is a 2-bridge knot with Conway notation $a_1a_2\cdots a_n$, with $a_i$ odd if and only if $i=1$, then $p(B)=c(B)+2$. If $T$ is a twist knot, then 
\[
p(T)=\begin{cases}
c(T)+2&\text{if $c(T)$ is odd}\\
c(T)+3&\text{if $c(T)$ is even.}
\end{cases}
\] 
\end{thm}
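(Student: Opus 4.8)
The plan is to prove the two statements by combining the upper bounds coming from the \"ubertangle moves of this section with the lower bound from Lemma \ref{arcindexlowerbound}. For the first statement, let $B$ have Conway notation $a_1a_2\cdots a_n$ with $a_i$ odd precisely when $i=1$. Since $c(B)=a_1+a_2+\cdots+a_n$ and $B$ is alternating, Lemma \ref{arcindexlowerbound} gives $p(B)\geq c(B)+2$ (and we will see $c(B)$ is odd here, since exactly one $a_i$ is odd, so this is the relevant case). For the upper bound, I would place $B$ in the standard alternating projection of Figure \ref{fig:rational_knot_ubertangle} realizing $c(B)$, and run the incremental reduction from the proof of Theorem \ref{rationalknotthm}, but tracking \emph{petaltangles} rather than \"ubertangles, using Moves 1a$'$, 1a$''$ in place of Moves 1a, 1b. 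The key point of the hypothesis "$a_i$ even for $i\geq 2$" is exactly that the parity of every twist region except the bottom one is even, so at each stage past the first we can apply the petaltangle version of Move 1 (which adds one loop per crossing and introduces no nesting), never being forced into a Move-3-type step that would inflate the count. Handling the bottom region $a_1$ (which is odd) costs one extra loop via the fold that changes a Type 1 configuration to Type 2, exactly as in Lemma \ref{move1}(b); the resulting petal projection has $c(B)+2$ loops. Combined with the lower bound, $p(B)=c(B)+2$.

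For twist knots, recall a twist knot $T$ has Conway notation $2\,k$ (a clasp of two crossings together with $k$ twists), so $c(T)=k+2$. When $c(T)$ is odd I would again invoke the first statement (a twist knot with $c(T)$ odd is a $2$-bridge knot of the required form after possibly rewriting the Conway notation so that the unique odd entry is first), giving $p(T)=c(T)+2$, matched by Lemma \ref{arcindexlowerbound}. When $c(T)$ is even, Lemma \ref{arcindexlowerbound} gives the lower bound $p(T)\geq c(T)+3$, so it remains to exhibit a petal projection with $c(T)+3$ loops. For this I would take the standard alternating diagram of $T$ and run the same petaltangle reduction; the parity now forces one place where we must absorb a crossing at the cost of an extra loop (either a Move-3-style step contributing two to the count in one spot, or equivalently the fold costing one extra loop applied once more than in the odd case), yielding a petal projection with exactly $c(T)+3$ loops and hence equality.

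The main obstacle I anticipate is the bookkeeping in the upper-bound construction: one must check that running the incremental \"ubertangle/petaltangle sweep up the alternating diagram of Figure \ref{fig:rational_knot_ubertangle} really does produce a genuine \emph{petal} projection (no nesting loops) and with exactly the claimed loop count, for \emph{every} admissible Conway vector, not just a sample one — in particular that the special treatment of the first (odd) twist region $a_1$ interacts correctly with the petaltangle at the bottom, and that the twist-knot clasp is handled by the same mechanism. The parity arithmetic — that "$a_i$ even for $i\geq 2$" is precisely what prevents Move 3 from ever being needed above the bottom region, and that for even-crossing twist knots exactly one forced extra loop appears — is the crux, and verifying it requires carefully stepping through the moves of this section rather than appealing to them as a black box. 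Everything else (the lower bounds, the reduction that $T$ with odd crossing number falls under the first statement) is immediate from Lemma \ref{arcindexlowerbound} and the already-established machinery.
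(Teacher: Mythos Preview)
Your treatment of the 2-bridge knot $B$ is essentially the paper's approach: lower bound from Lemma \ref{arcindexlowerbound}, upper bound by sweeping petaltangle moves up the alternating diagram. One mechanical point: the paper does not handle the odd $a_1$ via a Type~1-to-Type~2 fold. Instead it takes the very bottom crossing as the seed petaltangle (a 2-crossing), so that the remaining $a_1-1$ twists in the first region form an \emph{even} sequence and Move~1a$'$ applies directly; each subsequent $a_i$ is even by hypothesis, so Move~1a$''$ applies and the count works out to a pre-petal projection with $c(B)+1$ loops. Your description lands at the same number but via a fold move that is not actually stated for petaltangles; the paper's seeding trick avoids this.

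For twist knots with $c(T)$ odd, your reduction to the first statement is correct and arguably cleaner than what the paper does: writing $T$ with Conway notation $k\,2$ where $k=c(T)-2$ is odd fits the hypothesis of the first part exactly, giving $p(T)=c(T)+2$. The paper instead gives a direct geometric construction for all twist knots simultaneously.

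The genuine gap is the even case. You assert that running the same petaltangle sweep on an even-crossing twist knot ``forces one place where we must absorb a crossing at the cost of an extra loop,'' citing a Move-3-style step or an extra fold. But the only petaltangle moves the paper establishes are 1a$'$ and 1a$''$, both of which require an \emph{even} number of twists above the petaltangle; Move~3 produces nesting (Type~3 \"ubertangles are not petaltangles), and no petaltangle analogue of Move~1b is stated. So your sketch does not actually exhibit a petal projection with $c(T)+3$ loops. The paper closes this gap with a separate, explicit construction specific to twist knots (the sequence in Figure~\ref{fig:twist}(a)--(d)): one first manipulates the clasp into a 5-loop petaltangle and then absorbs the remaining $c(T)-2$ (odd case) or $c(T)-3$ (even case: hmm, actually $c(T)-2$) twists, yielding $c(T)+3$ loops when $c(T)$ is even. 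You would need either to supply a petaltangle version of Move~1b and check it produces the right count, or to give an analogous explicit construction.
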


\begin{proof}
Put $B$ in its alternating bridge form as shown in Figure \ref{fig:rational_knot_ubertangle} such that the bottommost sequence of twists has an odd number of crossings. We apply Move 1a$'$ to the bottommost sequence of twists, considering the bottommost twist to be a petaltangle of the form in Figure \ref{fig:petaltangle}. Now we can apply Move 1a$''$ to the next sequence of twists, and repeat the process. This ultimately yields a pre-petal projection with $c(B)+1$ loops, giving a petal projection with $c(B)+2$ loops, so $p(B)\leq c(B)+2$. The result now follows from Lemma \ref{arcindexlowerbound}.

Now we consider $T$. Since $T$ is a twist knot, it can be put in the form shown in Figure \ref{fig:twist}(a). Note that we can choose to move from \ref{fig:twist}(a) to either \ref{fig:twist}(b) or \ref{fig:twist}(c). As a last preparation, we leave to the reader the inductive proof that the move shown in \ref{fig:twist}(d) is valid.

\begin{figure}[htbp]
\centering
\begin{subfigure}[t]{.2\textwidth}
\centering
\includegraphics[height=28mm]{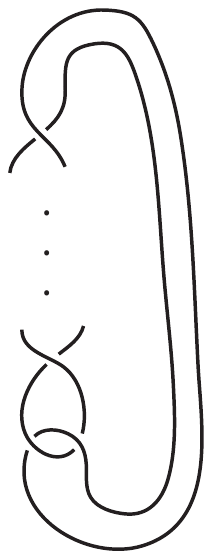}
\label{fig:twist1}
\caption{}
\qquad
\end{subfigure}
\begin{subfigure}[t]{.2\textwidth}
\centering
\includegraphics[height=28mm]{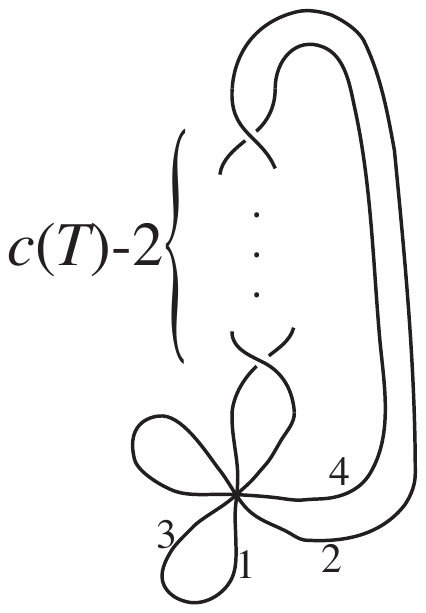}
\label{fig:twist2}
\caption{}
\qquad
\end{subfigure}
\begin{subfigure}[t]{.2\textwidth}
\centering
\includegraphics[height=28mm]{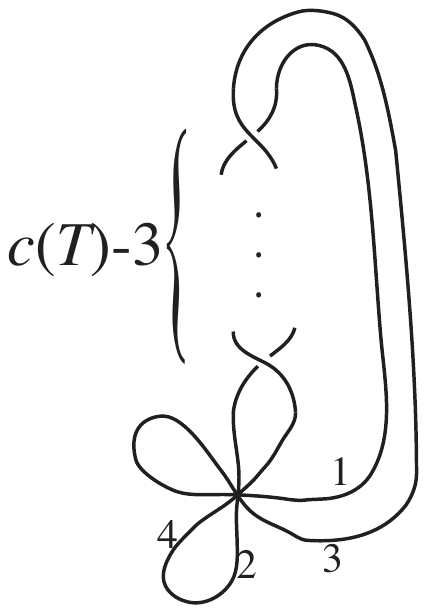}
\label{fig:twist3}
\caption{}
\qquad
\end{subfigure}
\begin{subfigure}[t]{.2\textwidth}
\centering
\includegraphics[height=28mm]{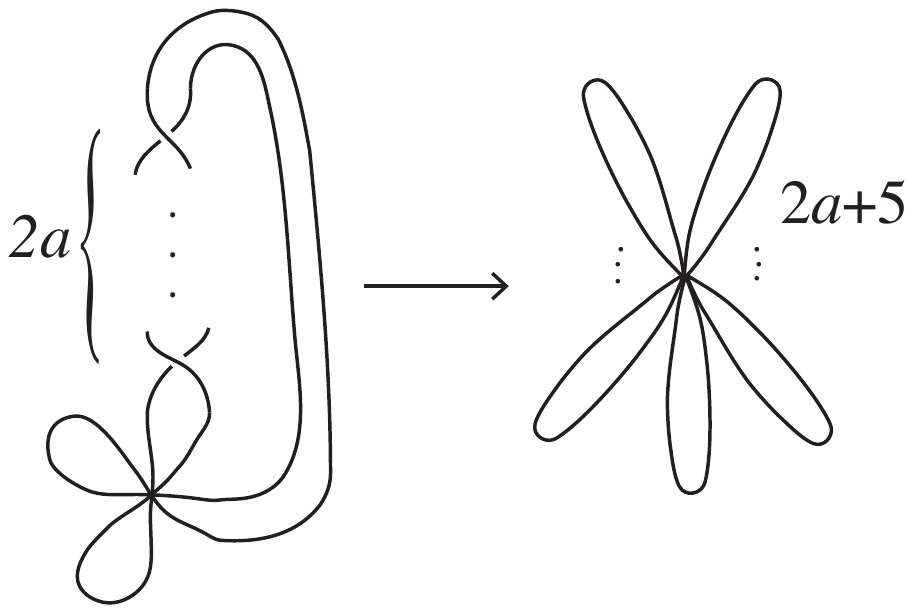}
\label{fig:twist4}
\caption{}
\end{subfigure}
\caption{Obtaining petal projections of twist knots.}
\label{fig:twist}
\qquad
\end{figure}

Now suppose $c(T)$ is odd. Move to \ref{fig:twist}(c) and then apply \ref{fig:twist}(d), yielding a petal projection of $T$ with $c(T)-3+5=c(T)+2$ loops. If $c(T)$ is even, move to \ref{fig:twist}(b) and then apply \ref{fig:twist}(d), yielding a petal projection of $T$ with $c(T)-2+5=c(T)+3$ loops. Hence the lower bounds on $p(T)$ from Lemma \ref{arcindexlowerbound} are also upper bounds, proving the second claim.
\end{proof}


\begin{lemma}\label{Move4}
(Move 4) Two \"ubertangles which are attached as shown in Figure \ref{fig:move_4_actual} may be simplified as shown.
\end{lemma}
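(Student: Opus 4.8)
The plan is to reduce Move 4 to a single local isotopy that slides the two übercrossings together until they coincide, and then to check that the merged multi-crossing is legal and that no crossing is altered during the slide.

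First I would set up the picture near the region of Figure \ref{fig:move_4_actual}: call the two übercrossings $c_1$ and $c_2$, and note that the strands connecting them form a band of parallel arcs running from certain lobes of $c_1$ to certain lobes of $c_2$. Because each such arc enters and leaves each übercrossing straight-on --- this is the defining bisection property of a multi-crossing --- I can isotope the portion of the diagram between $c_1$ and $c_2$ so that the connecting band is a straight strip, and then drag $c_2$ along this strip toward $c_1$, carrying its lobes along. In Figure \ref{fig:move_4_actual} the lobes of $c_2$ are arranged so that throughout this motion they remain disjoint from the lobes of $c_1$ and from the rest of the diagram, exactly as in the slides used to prove Moves 1--3.

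Next, at the instant $c_2$ reaches $c_1$, I would argue that the two multi-crossings fuse into one: every strand that passed straight through $c_1$ and every strand that passed straight through $c_2$ now passes straight through the common point, so by the bisection convention we obtain a single legal übercrossing whose multiplicity is the sum recorded on the two original übertangles (matching the number shown on the right of the figure). The level data is inherited block by block --- the strands coming from $c_1$ retain their relative over/under order, those from $c_2$ retain theirs, and the two blocks stack in the order dictated by which übercrossing's lobe the connecting band last met, which is precisely the configuration drawn on the right of Figure \ref{fig:move_4_actual}. Finally I would observe that no crossing changes during the slide: the only crossings that can appear or disappear are between the moving arcs and the stationary strands, and the arrangement in the figure is chosen exactly so that these stay consistent, just as folding or stretching a strand over an übertangle never changes a crossing in the proofs of Moves 1--3.

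The main obstacle I expect is not the isotopy but the bookkeeping of lobes and levels: one must verify that after the merge the strands leaving through each lobe of the combined übertangle still lie on the correct side of all the loops in that lobe, and that the multiplicity asserted in Figure \ref{fig:move_4_actual} is exactly right in every case allowed by the hypotheses. As with the earlier moves, the cleanest route is induction --- here on the number of strands in the connecting band --- reducing to the case of a single connecting strand, which is essentially Move 3 applied sideways.
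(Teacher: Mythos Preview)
Your proposal has a genuine gap: the merged \"ubercrossing does \emph{not} have multiplicity equal to the sum of the two original multiplicities. In the paper's application of Move 4 (see the proof of Theorem \ref{pretzel}), combining two \"ubertangles via this move raises their combined multiplicity by~$3$. So the picture on the right of Figure \ref{fig:move_4_actual} records $m+n+3$, not $m+n$, and your ``slide together and fuse'' heuristic cannot account for those three extra strands.

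The underlying geometric reason your slide fails is the bisection condition. If $c_1$ has strands entering at one set of angles and $c_2$ at another, dragging $c_2$ onto $c_1$ along a connecting band does not produce a point through which every strand passes \emph{straight}: the strands that formerly went through $c_2$ will generically meet the merged point at angles incompatible with those from $c_1$, so the result is not a legal multi-crossing in the sense of this paper. The paper's proof is instead a short explicit sequence of isotopies (Figure \ref{fig:move_4}) in which one \"ubertangle is folded over the other --- much as a single crossing is folded over an \"ubertangle in Moves 1--3 --- and it is precisely this folding that both restores the bisection property and introduces the extra strands through the crossing. Your proposed induction ``reducing to Move 3 applied sideways'' does not capture this: Move 3 absorbs a single $2$-crossing at a cost of $+2$, whereas here an entire \"ubertangle is being absorbed at a cost of $+3$, so the bookkeeping you outline cannot close.
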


\begin{figure}[!htbp]
\centering
\includegraphics[height=12mm]{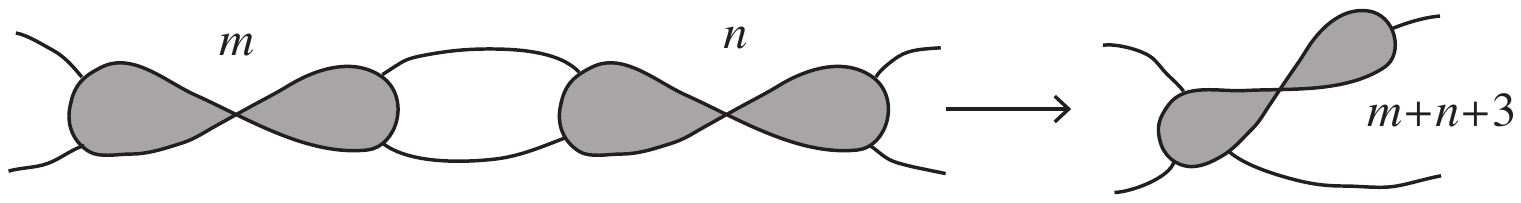}
\caption{Move 4.}
\label{fig:move_4_actual}
\end{figure}

\begin{proof}
Figure \ref{fig:move_4} illustrates a sequence of moves which proves the claim.
\begin{figure}[h]
\centering
\includegraphics[height=45mm]{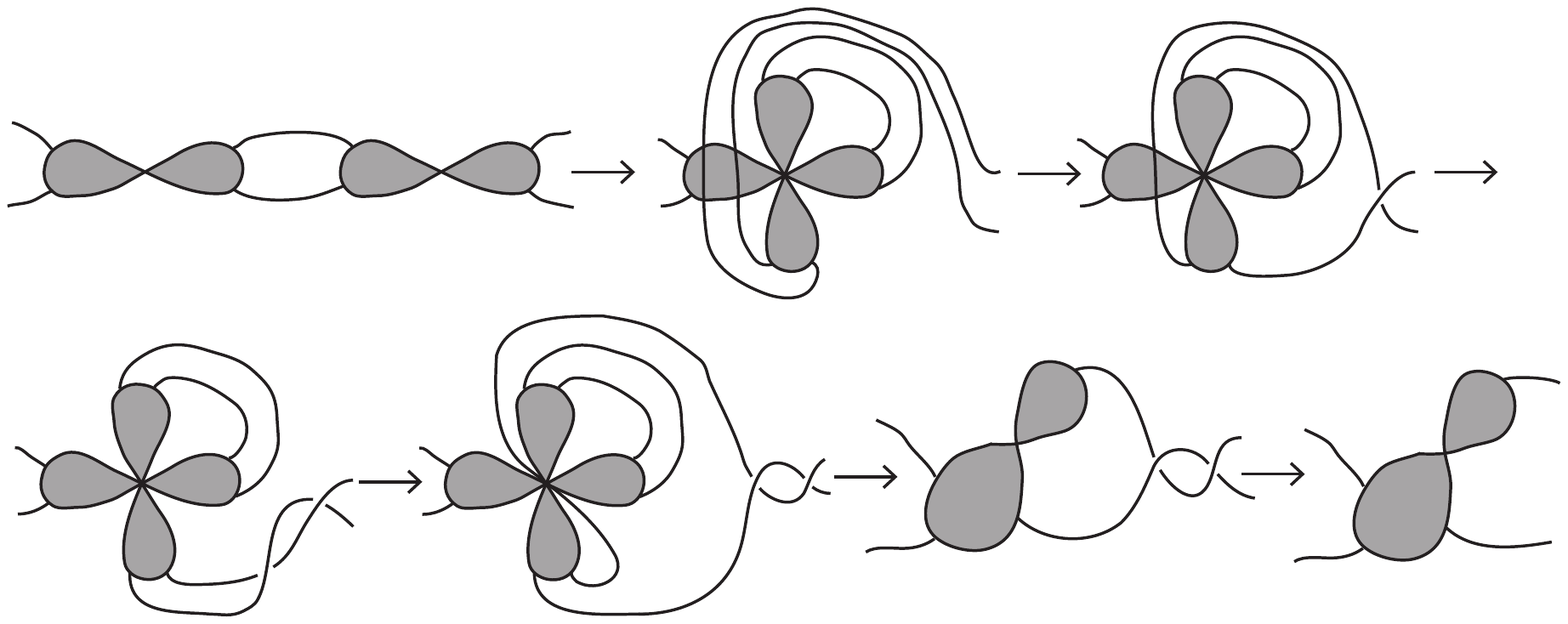}
\caption{Proving Lemma \ref{Move4}.}
\label{fig:move_4}
\end{figure}
\end{proof}


\begin{lemma} \label{Move5}
(Move 5) Two \"ubertangles which are attached as shown in Figure \ref{fig:move_5} may be simplified as shown.
\begin{figure}[here]
\centering
\includegraphics[height=15mm]{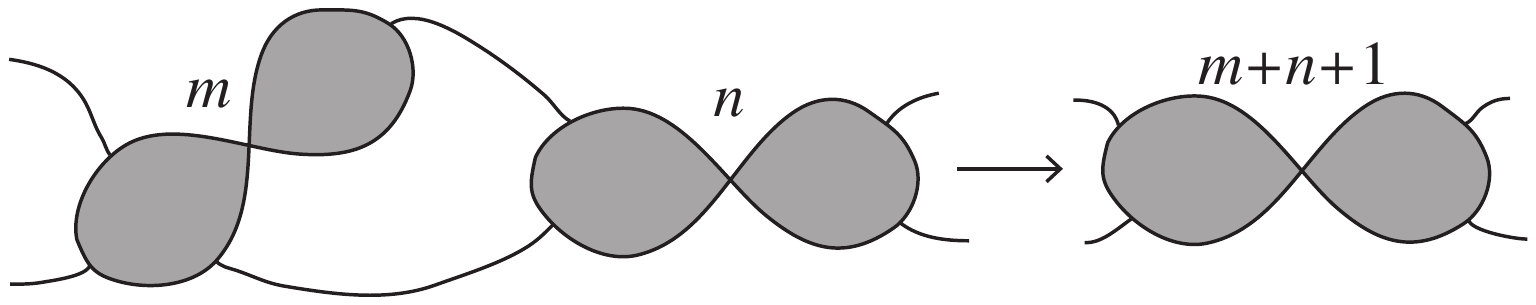}
\caption{Move 5.}
\label{fig:move_5}
\end{figure}
\end{lemma}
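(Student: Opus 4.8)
The plan is to prove Move 5 exactly the way Move 4 was established in Lemma \ref{Move4}: by exhibiting an explicit sequence of intermediate diagrams, each obtained from the previous one either by a planar isotopy of the knot or by one of the moves already proved (Moves 1--4), and to record the argument in a figure analogous to Figure \ref{fig:move_4}. The key structural fact we will lean on is the one built into the übertangle notation itself: a strand may be slid over or under the lobes of an übertangle, splitting the lobes apart, without changing the knot type and without destroying the legality of the central crossing. Thus the two übertangles of Figure \ref{fig:move_5}, which are joined along a pair of strands, have a good deal of freedom in how they may be repositioned relative to one another.

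First I would use this freedom to bring the second übertangle into a position directly adjacent to the first, stacking the two crossings so that the connecting strands form a short bridge between them. The aim is to convert the attachment pattern of Figure \ref{fig:move_5} into the pattern appearing on the left of Figure \ref{fig:move_4_actual}; doing so requires only planar isotopy, possibly together with a reflection of one of the two tangles. If this repositioning introduces extraneous twists between the two centers, I would absorb them first using Move 2 or Move 3, so that the input to the next step is genuinely two übertangles with nothing but the bridge between them.

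Having reduced to the configuration of Move 4, I would apply Lemma \ref{Move4} to merge the two übertangles into a single übertangle, keeping track of how the two multiplicities combine and how the lobes of the two original tangles interleave in the result. A concluding planar isotopy then puts the merged tangle into the form drawn on the right of Figure \ref{fig:move_5}. As with the preceding moves, the cleanest presentation is pictorial: the text above merely names which move or isotopy justifies each arrow in the accompanying chain of diagrams, in the same spirit as the proof of Lemma \ref{Move4}.

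The step I expect to be the main obstacle is the bookkeeping of the lobes when the two tangles are merged. One must check that after the strands are slid across the central crossing, no lobe becomes nested inside another in a way the notation cannot record, and — crucially — that \emph{every} straight path separating half of the combined lobes from the other half still bisects the merged crossing, so that the output really is a legal übertangle of the asserted multiplicity. Verifying this legality condition, rather than the isotopies themselves (which are already packaged inside Moves 1--4), is where the care is required; once it is confirmed, the simplification claimed in Figure \ref{fig:move_5} follows.
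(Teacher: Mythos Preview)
Your reduction to Move~4 cannot produce the output asserted in Figure~\ref{fig:move_5}. The point of having two separate lemmas is that the two attachment patterns behave differently: as recorded in the proof of Theorem~\ref{pretzel}, Move~4 merges \"ubertangles of multiplicities $m$ and $n$ into one of multiplicity $m+n+3$, whereas Move~5 achieves $m+n+1$. Every auxiliary move you invoke (Moves~1--3, and Move~4 itself) only \emph{adds} to the multiplicity of the central crossing; none of them can bring it back down. So any argument of the form ``isotope into the Move~4 configuration, absorb stray twists, apply Move~4'' will land you at multiplicity at least $m+n+3$, not the $m+n+1$ that the figure claims. The more efficient merge cannot be derived from the less efficient one in this way.

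There is also a gap one step earlier: the left sides of Figures~\ref{fig:move_4_actual} and~\ref{fig:move_5} are not planar-isotopic configurations (if they were, there would be no need for two lemmas, and no reason for the alternating application in Theorem~\ref{pretzel}). The connecting strands enter the second \"ubertangle on different sides in the two cases, and no amount of sliding across lobes or reflecting a tangle turns one pattern into the other without changing the crossing data. The paper's proof does not attempt this reduction; it supplies an independent short sequence of isotopies, drawn in Figure~\ref{fig:move_5_proof}, that merges the two crossings directly and realizes the $+1$ count.
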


\begin{proof}
Figure \ref{fig:move_5_proof} shows one appropriate sequence of moves.
\begin{figure}[here]
\centering
\includegraphics[height=18mm]{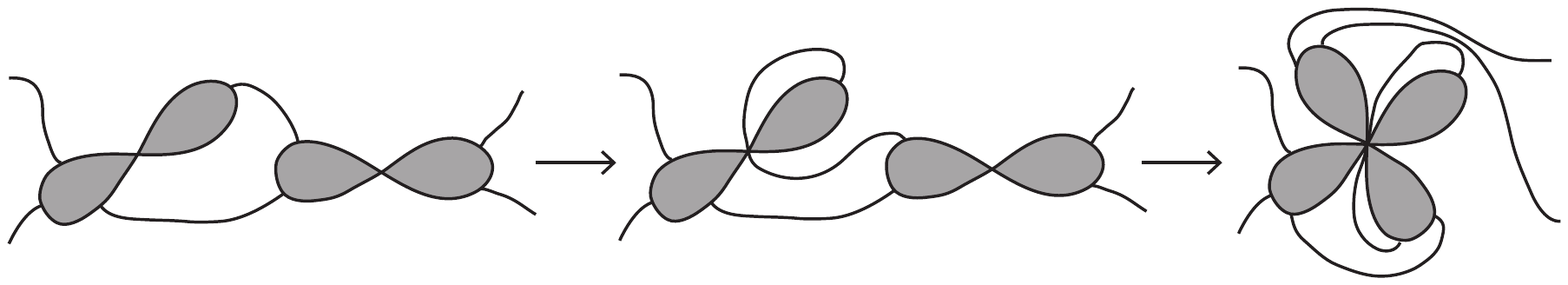}
\caption{Proving Lemma \ref{Move5}.}
\label{fig:move_5_proof}
\end{figure}
\end{proof}


\begin{remark}
The next theorem provides an upper bound on the \"ubercrossing number of pretzel knots. Pretzel links are formed from vertical columns of twists; we observe that at most one of the columns can contain an even number of crossings if a pretzel link forms a knot.
\end{remark}

\begin{thm}\label{pretzel}
Let $K$ be a pretzel knot whose Conway notation is a finite sequence of integers of the same sign separated by commas. If $K$ has an even number $\ell$ in its Conway sequence, then
\[
\ub(K)\leq
\begin{cases}
c(K)+\ell+3k-3&\text{if $k$ is odd}\\
c(K)+\ell+3k-2&\text{if $k$ is even},
\end{cases}
\]
where $k$ is length of the Conway sequence. If the sequence contains only odd numbers, then
\[
\ub(K)\leq
\begin{cases}
c(K)+3k-2&\text{if $k$ is odd}\\
c(K)+3k-1&\text{if $k$ is even},
\end{cases}
\]
\end{thm}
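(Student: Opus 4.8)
The plan is to mimic the argument of Theorem~\ref{rationalknotthm}, but carried out column-by-column on the standard pretzel diagram rather than left-to-right along a continued-fraction diagram. First I would fix the obvious minimal alternating projection of $K$ obtained by stacking the $k$ vertical columns of twists side by side and closing them off at top and bottom; since the columns all have the same sign this projection is alternating, so it realizes $c(K)$. The idea is to process the columns one at a time, absorbing each column into a running \"ubertangle using Moves~1--3, and then to fuse consecutive running \"ubertangles using the combination moves (Lemmas~\ref{Move4} and~\ref{Move5}).

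The key bookkeeping step is to count the cost of each column. Within a single column, absorbing a stack of twists into an \"ubertangle costs one unit of multiplicity per crossing whenever we may use Move~1 or Move~2, and two units per crossing only when parity forces Move~3; exactly as in the proof of Theorem~\ref{rationalknotthm}, the parity obstruction can be absorbed by treating the bottommost crossing of a column specially, so that a column with an odd number of crossings contributes essentially its crossing count plus a bounded constant, while an even column (of which the remark preceding the theorem guarantees there is at most one, namely the entry $\ell$) contributes its crossing count plus a slightly larger constant. This is the source of the extra ``$+\ell$'' term in the first case: the even column's crossings cannot all be consolidated one-for-one, and the lost crossings re-enter as multiplicity. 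Summing the per-column crossing counts reproduces $c(K)$, and the bounded per-column overheads sum to a linear-in-$k$ term; the parity of $k$ enters because fusing the $k$ running \"ubertangles into one via Moves~4 and~5 requires pairing them up, and an odd versus even number of pieces changes the final fusion count by one, giving the $3k-3$ versus $3k-2$ (resp.\ $3k-2$ versus $3k-1$) distinction.

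Concretely the steps, in order, are: (1) set up the alternating pretzel projection and check it realizes $c(K)$; (2) prove a single-column lemma stating that a column with $a$ crossings can be turned into an \"ubertangle of multiplicity at most $a+1$ if $a$ is odd and at most $a+2$ if $a$ is even, using Moves~1a, 1b, 2a, 2b, 3 and the bottommost-crossing trick; (3) induct on $k$, at each stage fusing the newly absorbed column's \"ubertangle onto the accumulated \"ubertangle by Move~4 or Move~5, and track how the multiplicity and the number of nesting loops evolve; (4) observe that the even entry, if present, is processed exactly once and contributes the $+\ell$; (5) collect the constants, splitting on the parity of $k$, and convert the final \"ubertangle to an \"ubercrossing projection by separating its lobes, which yields an \"ubercrossing of multiplicity equal to the accumulated multiplicity.

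I expect the main obstacle to be step~(3): verifying that Moves~4 and~5 can actually be applied at each fusion stage, because those moves were stated for two \"ubertangles ``attached as shown,'' and one must check that after absorbing a column the accumulated \"ubertangle really does present itself to the next column in the required attaching pattern (possibly after a planar isotopy or a fold changing \"ubertangle type, as in the proof of Move~2). A secondary subtlety is keeping the running count of \emph{nesting} loops under control so that the final object is genuinely an \"ubercrossing projection with the claimed multiplicity and not merely some multi-crossing diagram; this is where the parity-of-$k$ case split must be pinned down carefully, and where a small diagrammatic figure analogous to Figure~\ref{fig:move_4} would do most of the work.
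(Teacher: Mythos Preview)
Your plan is essentially the paper's own argument: take the alternating pretzel diagram (which realizes $c(K)$), turn each column separately into an \"ubertangle, and then fuse the resulting row of \"ubertangles from left to right using Moves~4 and~5 in alternation (cost $+3$, $+1$, $+3$, $+1,\ldots$, with $k-1$ fusions in total). The parity-of-$k$ split arises exactly as you say, from whether the alternating sequence $3,1,3,\ldots$ ends on a $3$ or a $1$.

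Where your proposal goes wrong is the per-column cost in step~(2). In the paper, an odd column with $a$ crossings is processed with Move~1a$'$ and yields multiplicity $a+1$ (so the $k-1$ odd columns contribute $(c(K)-\ell)+(k-1)$ in the even-entry case, and all $k$ columns contribute $c(K)+k$ in the all-odd case). The even column, however, is processed with Move~3, which costs \emph{two} per crossing and produces a Type~3 \"ubertangle of multiplicity $2\ell$, not $\ell+2$. That doubling is precisely the origin of the extra $+\ell$ in the bound: the intermediate total is $c(K)+\ell+k-1$, and adding the fusion costs recovers the stated formulas. Your step~(2) as written (even column $\mapsto$ multiplicity $a+2$) would give a strictly better bound with no $+\ell$ term, contradicting the theorem you are trying to prove; your surrounding prose (``the even column's crossings cannot all be consolidated one-for-one'') has the right instinct, but the quantification is off by $\ell-2$.

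If instead you tried to get multiplicity $\ell+1$ on the even column via Move~1b, you would land on a Type~2 \"ubertangle whose attaching pattern is not the one Moves~4 and~5 accept, so the fusion step would fail; this is the real reason Move~3 (with its worse $2$-per-crossing cost) is invoked for that column. Finally, your secondary worry about controlling nesting loops is unnecessary: once everything is fused into a single \"ubertangle, you already have an \"ubercrossing projection whose multiplicity equals the accumulated count.
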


\begin{proof}
Suppose $K$'s Conway sequence contains an even number $\ell$. Take the canonical projection of $K$, shown in Figure \ref{fig:pretzel}(a), which is alternating and reduced and so must realize $c(K)$. We apply Move 1a$'$ to each column of twists with an odd number of crossings, and Move 3 to the column with $\ell$ crossings.
\begin{figure}[h]
\centering
\begin{subfigure}[b]{.3\textwidth}
\includegraphics[height=23mm]{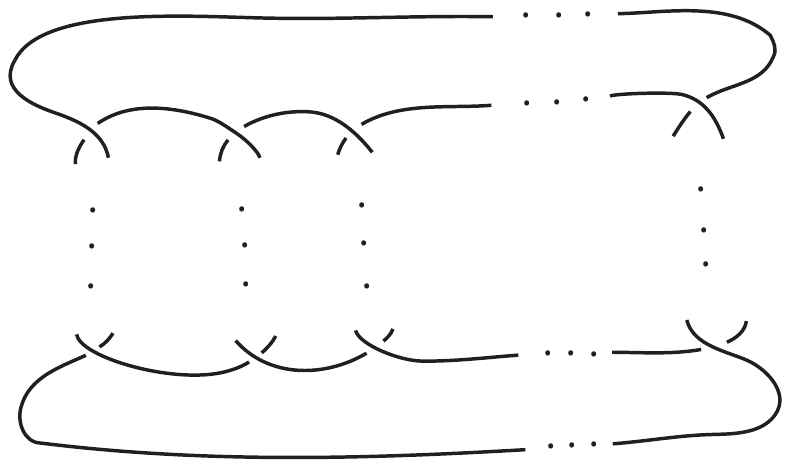}
\caption{}
\label{fig:pretzel_knot}
\qquad
\end{subfigure}
\begin{subfigure}[b]{.3\textwidth}
\centering
\includegraphics[height=20mm]{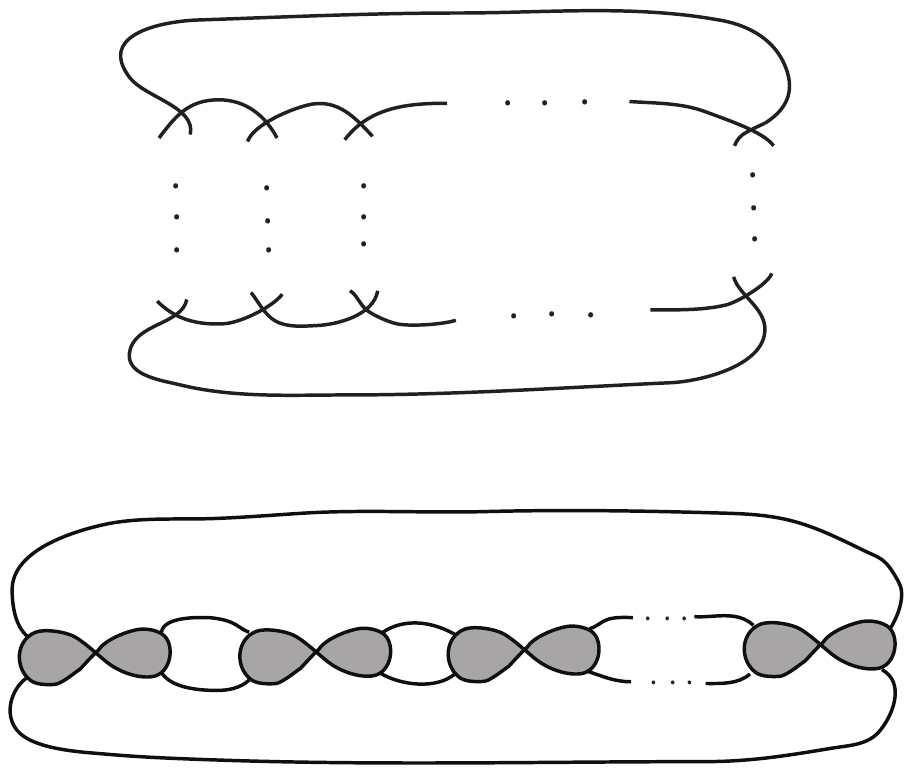}
\caption{}
\label{fig:pretzel_solution}
\qquad
\end{subfigure}
\caption{Proving Theorem \ref{pretzel}.}
\label{fig:pretzel}
\end{figure}
In this way we obtain a projection of $K$ as in Figure \ref{fig:pretzel}(b) in which the sum of all crossing multiplicities is $c(K)-\ell+(k-1)+2\ell=c(K)+\ell+k-1$.

Now we may apply Moves 4 and 5 alternately beginning on the left of the diagram in Figure \ref{fig:pretzel}(b). When we apply Move 4 we combine two \"ubertangles and increase their combined multiplicity by 3, and application of Move 5 increases their combined multiplicity by 1. Hence we obtain an \"ubercrossing projection of $K$ with 
\[
c(K)+\ell+k-1+\overbrace{3+1+3+1+\cdots+[3\text{ or } 1]}^{k-1}
\]
loops, as desired.

Now suppose the sequence contains no even numbers. We can use the same method without using Move 3, which gives an \"ubercrossing projection of $K$ with
\[
c(K)+k+\overbrace{3+1+3+1+\cdots+[3\text{ or } 1]}^{k-1}
\]
loops, proving the second fact.
\end{proof}

\subsection{Torus Knots}

We now obtain  bounds for \"ubercrossing number and petal number for various torus knots and links. We develop two more moves for this purpose.

\begin{lemma}\label{Move6}
(Move 6) A suitable segment of a braid may be reduced as shown in Figure \ref{fig:torus_knot_2}.
\end{lemma}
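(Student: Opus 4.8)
The plan is to prove Move 6 by exhibiting an explicit finite sequence of planar isotopies and legal crossing manipulations on the braid segment, in the same style as the proofs of Moves 4 and 5 (Lemmas~\ref{Move4} and \ref{Move5}). Since Move 6 concerns a ``suitable segment of a braid'' — presumably a block of parallel strands passing through a portion of a torus braid $(\sigma_1\sigma_2\cdots\sigma_{r-1})^s$ — the key is to identify which local configuration in Figure~\ref{fig:torus_knot_2} is being claimed, and then show the left-hand picture can be transformed into the right-hand picture without changing the knot type and without violating the bisection requirement at any multi-crossing that appears.

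First I would set up notation for the braid segment: fix the number of strands and the pattern of elementary crossings in the ``before'' picture, and record the intended ``after'' picture (which should have fewer crossings, or crossings consolidated into an \"ubertangle of controlled multiplicity). Next I would perform the reduction in stages, each stage being either (i) a Reidemeister-II-type cancellation of an adjacent oppositely-signed pair, (ii) a slide of a strand across the braid that is unobstructed because of the column structure of the torus braid, or (iii) an absorption of a crossing into a growing \"ubertangle, exactly as in the justification figures for Moves~1--3 (Figures~\ref{fig:move_1a_prelim}, \ref{fig:move_3_prelim}). At each absorption step I would check the bisection condition: any straight path through the center of the \"ubertangle that separates half the lobes from the other half still bisects the crossing, which is guaranteed by the definition of an \"ubertangle recalled before Figure~\ref{fig:ubertangles}. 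Finally I would read off the resulting multiplicity and confirm it matches the right-hand side of Figure~\ref{fig:torus_knot_2}.

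The main obstacle I expect is bookkeeping rather than conceptual: verifying that the strand slides in step (ii) really are unobstructed — i.e., that the arcs being moved bound disks disjoint from the rest of the braid segment — and that the crossing labels ($o$ versus $u$, or the level ordering $i>j$) are respected throughout, so that no crossing is inadvertently changed. A secondary subtlety is the induction hidden in ``a suitable segment'': if the claimed move applies to a block of arbitrary length, the argument will need an inductive step reducing a length-$(n+1)$ block to a length-$n$ block plus two eliminated crossings, parallel to the induction in Lemma~\ref{move1}. I would handle this by stating the base case (shortest admissible segment) explicitly, then giving the inductive reduction and invoking the hypothesis on the shorter segment, with the figure doing the work of displaying the local picture.

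Concretely, my write-up would read: ``\emph{Proof.} Figure~\ref{fig:torus_knot_2_proof} exhibits a sequence of planar isotopies and legal crossing absorptions transforming the left-hand configuration into the right-hand one. We proceed by induction on the length of the braid segment. In the base case the claim is immediate from inspection of the figure. For the inductive step, isotope the topmost pair of parallel strands as shown, absorbing the bottom two crossings into the \"ubertangle at the cost of increasing its multiplicity as indicated; the bisection condition is preserved since the path separating the lobes still bisects the enlarged crossing, and no $o/u$ labels are altered because the relative heights of the strands are maintained throughout the isotopy. Applying the inductive hypothesis to the remaining shorter segment completes the reduction. $\qed$''
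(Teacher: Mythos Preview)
Your overall instinct—prove the move by induction with a pictorial inductive step—matches the paper. But the concrete induction you wrote down is the wrong one, because you have modeled it on Move~1 (absorbing a twist column two crossings at a time) rather than on what Move~6 actually does.

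Move~6 takes a braid block on $r$ strands (one full pass of a strand across the others, as appears in the standard braid word for a torus knot) and collapses it to a single $r$-crossing \"ubertangle. The paper inducts on the number of strands $r$, with trivial base case $r=2$. For the inductive step one peels off a single outermost strand (the ``black strand''), applies the inductive hypothesis to the remaining $(r-1)$-strand block to obtain an $(r-1)$-crossing, and then slides the black strand straight through that crossing, raising its multiplicity by exactly one. Your write-up instead inducts on ``length of the segment,'' isotopes the ``topmost pair of parallel strands,'' and ``absorbs the bottom two crossings''; that is the Move~1 mechanism for a $2$-strand twist column, and it does not apply here because the strands in the block are not a repeated twist of two strands but a staircase of $r-1$ distinct elementary crossings $\sigma_1\sigma_2\cdots\sigma_{r-1}$. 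There is no Reidemeister~II cancellation available, and no pair of crossings to absorb simultaneously.

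So the gap is the identification of the induction parameter and the local step: replace ``length'' by $r$, replace ``absorb two crossings'' by ``separate one strand, apply the hypothesis to the $(r-1)$-strand sub-block, then slide that strand through the resulting $(r-1)$-crossing.'' Once you make that change, your checks about the bisection condition and preservation of over/under data are exactly the right things to verify, and the argument goes through as in the paper.
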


\begin{figure}[here]
\centering
\includegraphics[height=30mm]{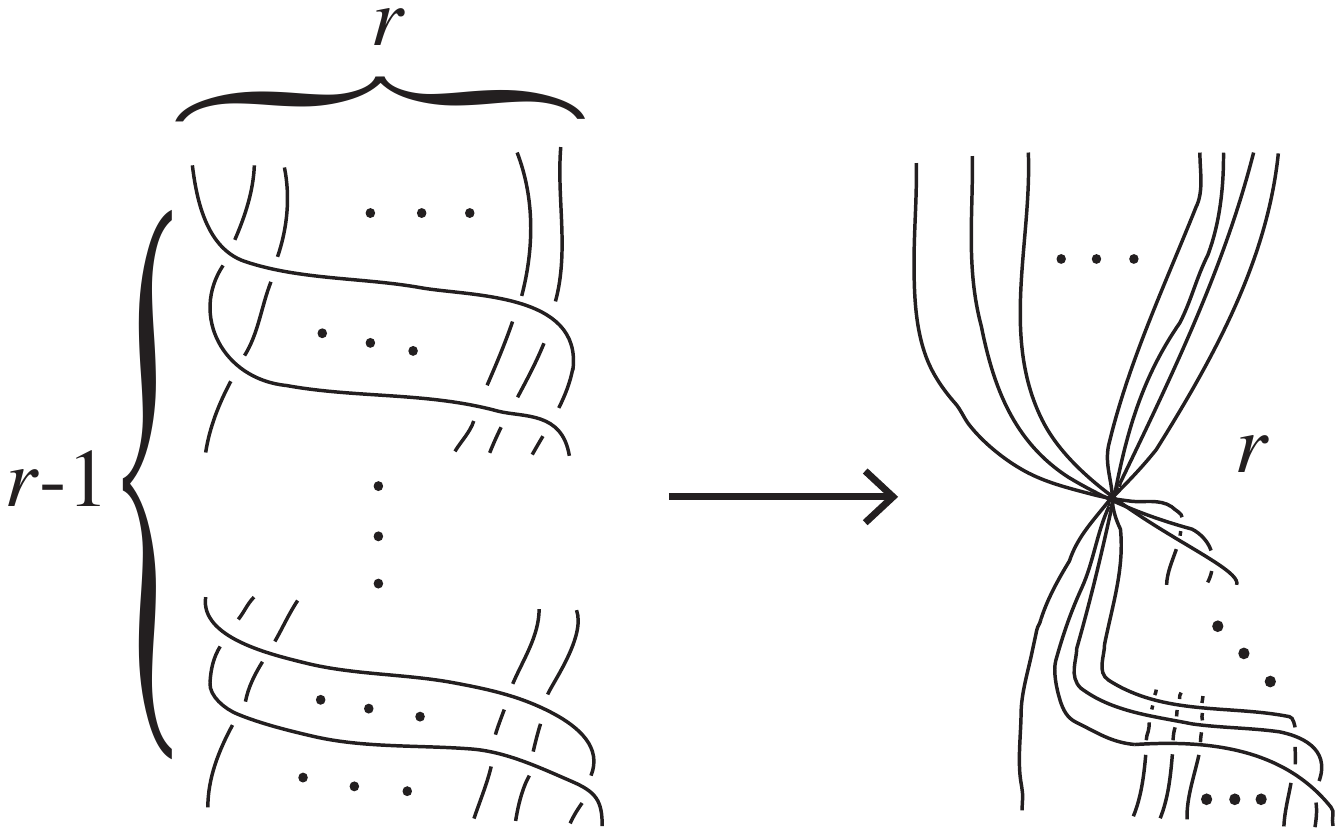}
\caption{Move 6.}
\label{fig:torus_knot_2}
\end{figure}

\begin{proof}
We induct on $r$, with a trivial base case when $r=2$. The inductive step is shown in figure \ref{fig:torus_knot_1a}; we move the black strand as shown and apply the inductive hypothesis to the indicated region. Then the black strand may be slid into the crossing, adding 1 to its multiplicity, to complete the induction.
\begin{figure}[here]
\centering
\includegraphics[height=35mm]{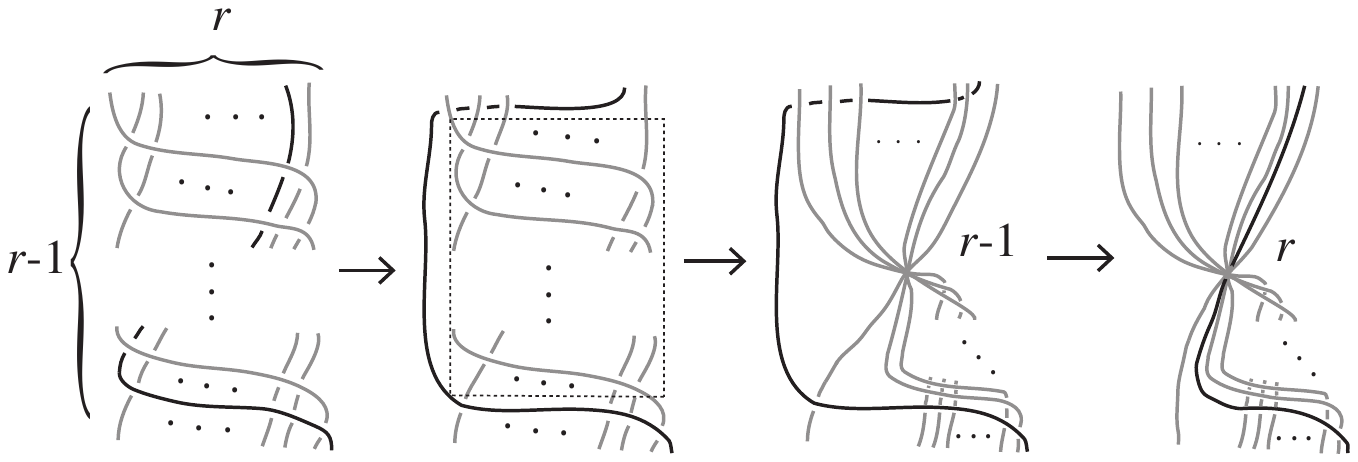}
\caption{Proving Lemma \ref{Move6}.}
\label{fig:torus_knot_1a}
\qquad
\end{figure}
\end{proof}

At this point it becomes convenient to work with \"ubertangles with more than two lobes as in Figure \ref{fig:torus_knot_3}. This notation indicates that a line through any pair of opposite regions bisects the crossing.

\begin{lemma}\label{Move7}
(Move 7). A portion of a projection may be simplified as in Figure \ref{fig:torus_knot_3}.
\end{lemma}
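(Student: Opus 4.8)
The plan is to mirror the inductive structure already used for Move~6 (Lemma~\ref{Move6}). First I would fix notation for the multi-lobed \"ubertangle appearing on the right of Figure~\ref{fig:torus_knot_3}, recalling that its defining property is the one stated just before the lemma: a straight line through any pair of opposite lobes bisects the central crossing. This is precisely the condition that must be preserved at every stage, so it should be the running invariant of the argument. I would then induct on the parameter $r$ governing the size of the braid portion on the left of the figure, taking as base case the smallest value for which the asserted simplification reduces to a single planar isotopy together with one application of an earlier move (a Type~3 absorption as in Move~3, or one step of Move~6).

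For the inductive step, I would isolate the outermost strand of the braid segment and perform a planar isotopy pulling it clear of the remainder, so that what is left is a strictly smaller instance of the same configuration. Applying the inductive hypothesis to that subregion produces a multi-lobed \"ubertangle of one lower multiplicity. Then, exactly as in the proof of Move~6, I would slide the freed strand straight through the central crossing, adding a lobe on each side and raising the multiplicity by $1$, and verify that the output is still a legal \"ubertangle: the new strand passes straight through, and since it was outermost, its insertion does not disturb the ``opposite lobes bisect the crossing'' condition for any other pair of lobes. Iterating this step down to the base case yields the claimed projection, and the final multiplicity is read off from the total number of crossings absorbed.

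The main obstacle, as elsewhere in this section, is the crossing bookkeeping rather than the topology. I must check that sliding the strand into the \"ubertangle introduces no crossing changes, i.e.\ that the over/under pattern inherited from the torus braid is consistent with inserting the strand at the correct level of the central crossing; and that the dotted-line splitting convention for lobes --- which permits a strand to pass entirely above or below half of the lobes and still give a legal crossing --- is respected throughout. A secondary check is that the numbers of lobes on the two sides remain balanced, so that the multi-lobed notation continues to make sense. Once these verifications are in place, the induction closes and the lemma follows.
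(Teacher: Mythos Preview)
Your proposal is correct and follows essentially the same route as the paper: an induction on $r$ with base case $r=2$, where the inductive step peels off the outer strand, applies the hypothesis to the remaining $(r-1)$-configuration, and then slides the freed strand back through the central crossing to enlarge the multi-lobed \"ubertangle. The paper's proof is almost entirely pictorial (Figure~\ref{fig:torus_induction}), so your verbal description of the isotopy and the ``opposite lobes bisect the crossing'' invariant simply spells out what the figures encode.
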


\begin{figure}[here]
\centering
\includegraphics[height=35mm]{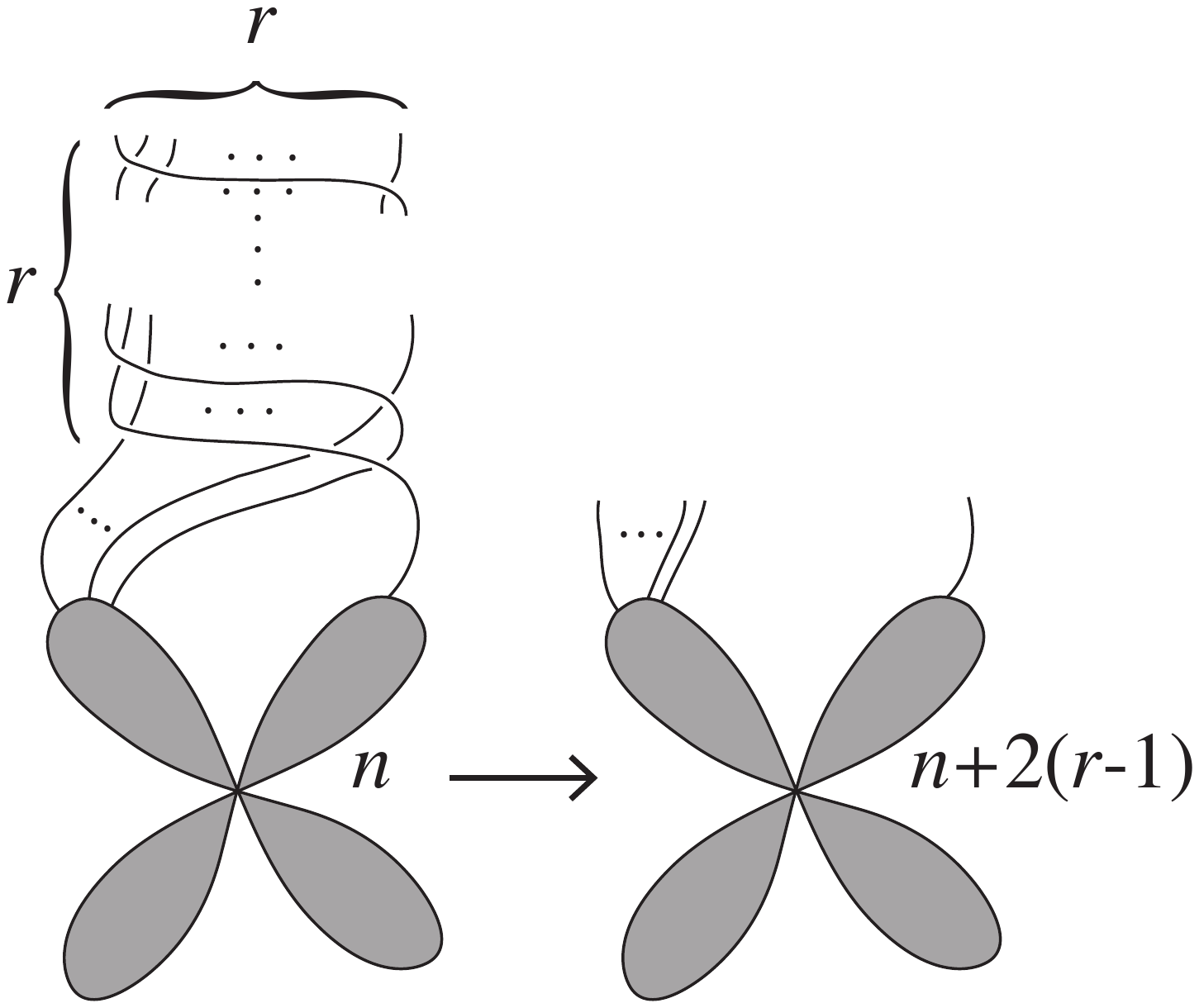}
\caption{Move 7.}
\label{fig:torus_knot_3}
\end{figure}

\begin{proof}
We induct on $r$, with base case $r=2$ shown in Figure \ref{fig:torus_induction}(a). Assuming the move is valid for $r-1$, we proceed as in Figure \ref{fig:torus_induction}(b).
\end{proof}

\begin{figure}[here]
\centering
\begin{subfigure}[b]{.3\textwidth}
\includegraphics[height=25mm]{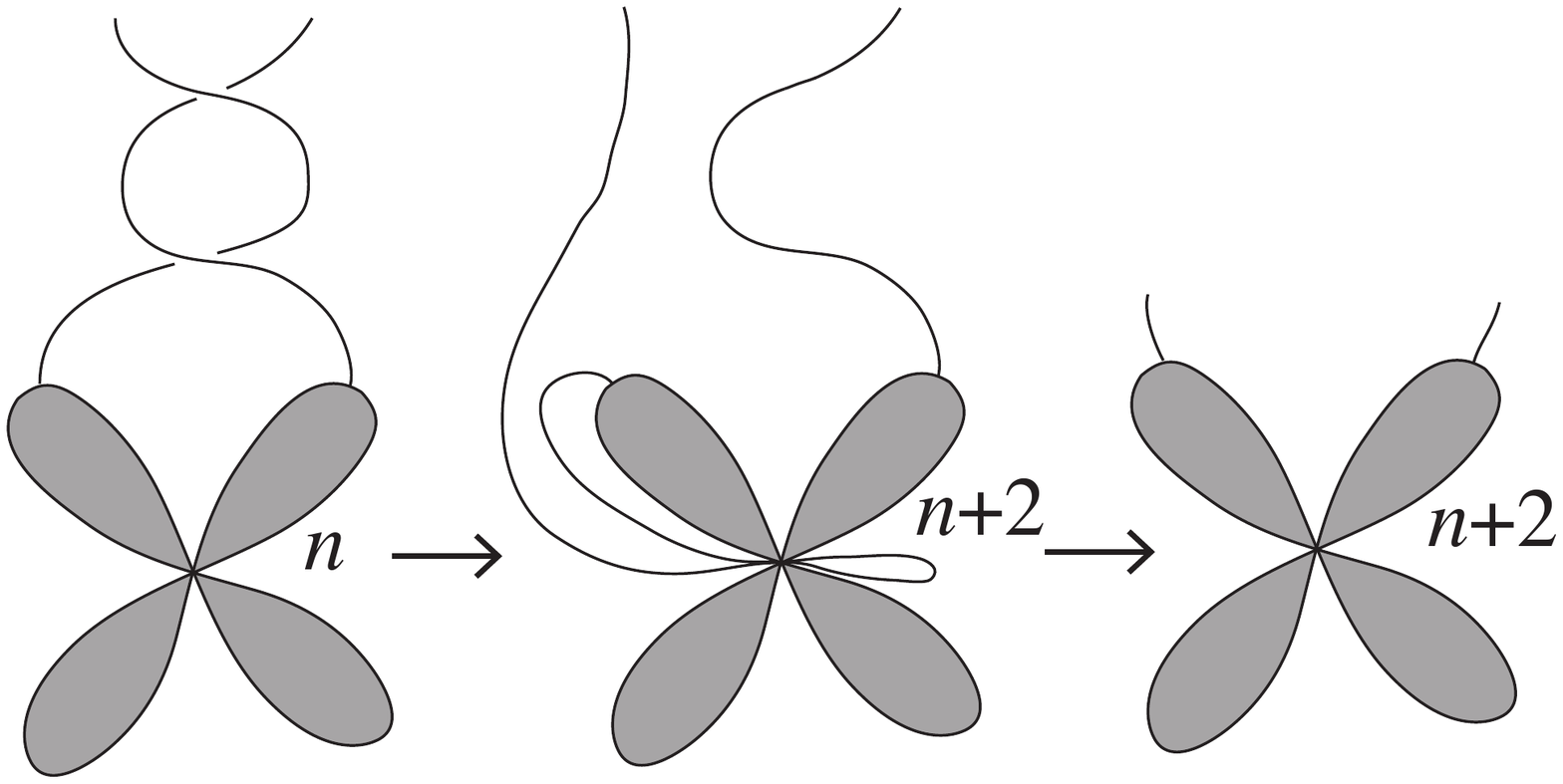}
\caption{}
\label{fig:torus_knot_4}
\qquad
\end{subfigure}
\qquad
\begin{subfigure}[b]{.3\textwidth}
\includegraphics[height=30mm]{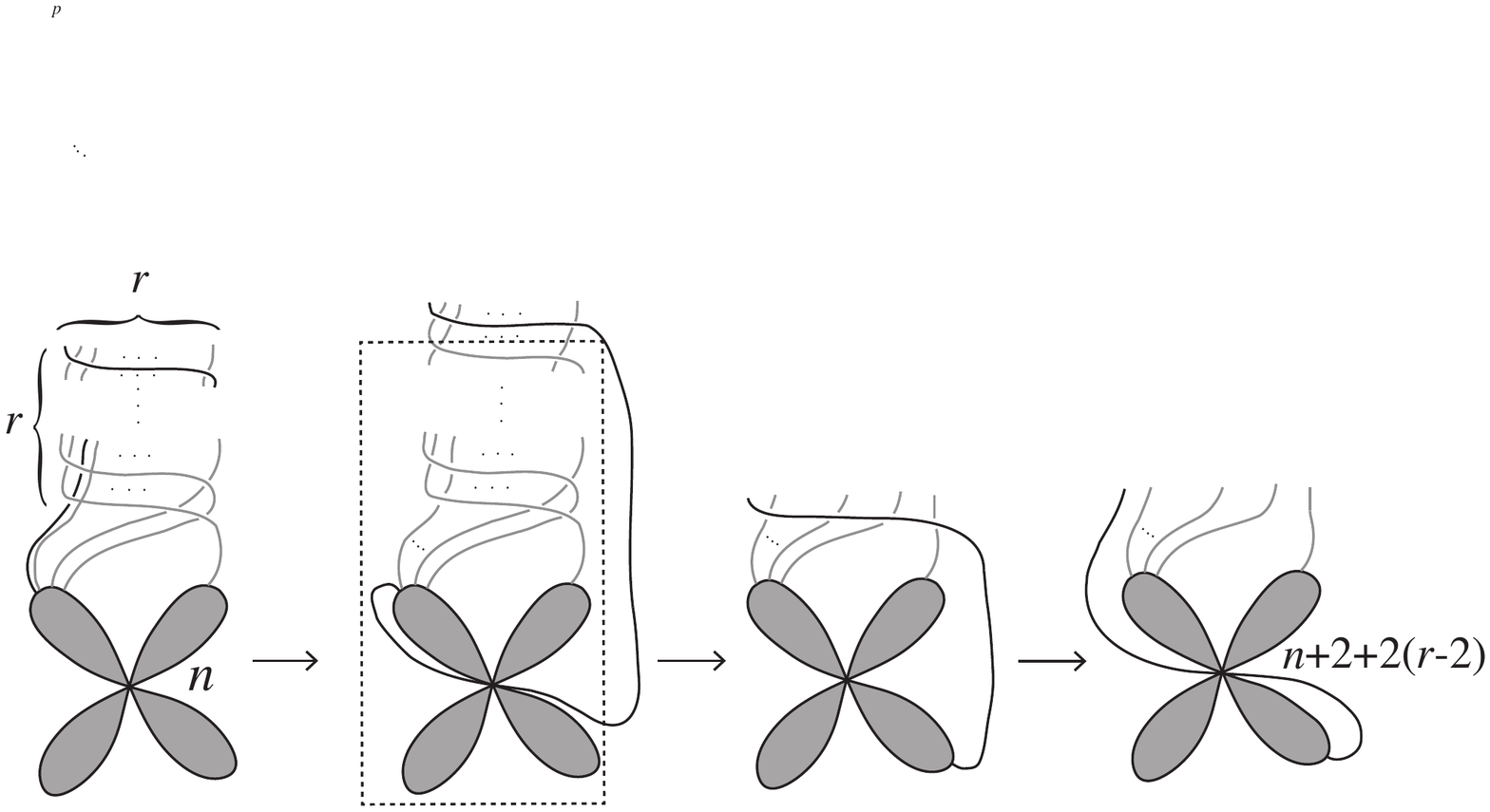}
\caption{}
\label{fig:torus_knot_5}
\qquad
\end{subfigure}
\caption{Proving Lemma \ref{Move7}.}
\label{fig:torus_induction}
\qquad
\end{figure}

\begin{thm}
The torus knot $T_{3,s}$ satisfies $\ub(T_{3,s})\leq 4\lfloor\frac{s-2}{3}\rfloor+[s-2]_3+4$, where $[a]_3$ denotes the residue of $a$ modulo 3.
\end{thm}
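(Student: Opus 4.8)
# Proof Proposal for the Bound on $\ub(T_{3,s})$

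The plan is to take the standard closed $3$-braid representation of $T_{3,s}$, namely the closure of $(\sigma_1\sigma_2)^s$, and process it from one end to the other using Moves 6 and 7 to absorb crossings into a single growing multi-lobed \"ubertangle. First I would fix a convenient picture: write $(\sigma_1\sigma_2)^s$ as a vertical stack of $s$ copies of the $2$-crossing block $\sigma_1\sigma_2$, and take the braid closure. The key numerical observation driving the $\lfloor\frac{s-2}{3}\rfloor$ term is that Move 7 consumes crossings in groups governed by a parameter $r$ that advances in steps related to $3$ (the braid width), so iterating it $\lfloor\frac{s-2}{3}\rfloor$ times handles all but a bounded remainder of the braid, and the remainder — of size $[s-2]_3 \in \{0,1,2\}$ — is cleaned up by a final application of Move 6 (or a direct small computation), contributing the additive $+4$ from the initial seed \"ubertangle plus the closure arcs.

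The main steps, in order, would be: (1) isotope the closed braid so that the braid word is presented as repeated $\sigma_1\sigma_2$ blocks and identify a subword of the form to which Move 6 applies, creating an initial \"ubertangle of small multiplicity that serves as the seed; (2) show that the configuration just above this seed \"ubertangle matches the hypothesis of Move 7 with some parameter $r$, so that one application of Move 7 absorbs a block of crossings while increasing multiplicity by a controlled amount (and possibly adding lobes); (3) argue that after this application the picture is again of the same shape with $r$ decreased appropriately, so the process iterates; (4) track the bookkeeping: each of the $\lfloor\frac{s-2}{3}\rfloor$ full iterations contributes $4$ to the multiplicity count, the leftover $[s-2]_3$ crossings are absorbed at a cost of one each via Move 6, and the seed together with the two closure strands accounts for the final $+4$; (5) conclude that the resulting projection is a single \"ubercrossing, hence $\ub(T_{3,s})$ is at most the total multiplicity, which sums to $4\lfloor\frac{s-2}{3}\rfloor+[s-2]_3+4$.

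The hard part will be step (2)–(3): verifying that after applying Move 7 once the remaining diagram genuinely falls back into the shape required to reapply it, i.e. that the multi-lobed \"ubertangle produced interacts with the next braid block exactly as in Figure \ref{fig:torus_knot_3}. This requires carefully drawing the intermediate diagrams and checking that no nesting loops obstruct the next move and that the lobe structure (which lobes contain which strands) stays compatible with the "line through opposite regions bisects the crossing" condition. I would handle the edge cases ($s \equiv 2 \pmod 3$ versus $s\equiv 0,1$) separately at the top of the braid, mirroring how the $2$-bridge and pretzel proofs split on parity, using Move 6 on the short leftover segment exactly as Move 1a was used on the "bottommost twist" in Theorem \ref{rationalknotthm}. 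The Euler-characteristic style sanity check and the known value $s(T_{3,s})$ are not needed here; this is purely a diagrammatic consolidation argument, and the only real risk is an off-by-one in the floor/residue bookkeeping, which the explicit small cases $s=4,5,7$ would pin down.
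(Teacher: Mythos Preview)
Your proposal follows essentially the same route as the paper: seed an \"ubertangle with Move~6 applied to the bottom two overpasses, then iterate Move~7 to absorb the remaining $s-2$ overpasses in groups of three (each group adding $4$ to the multiplicity), and finally clean up the $[s-2]_3$ leftover overpasses. Two small corrections, though: the parameter $r$ in Move~7 is the braid width and stays fixed at $3$ throughout---it does not ``decrease appropriately'' between iterations; what decreases is simply the count of remaining overpasses. Second, the leftover $0$, $1$, or $2$ overpasses are not handled by reapplying Move~6 (which creates a crossing from a raw braid segment, not one sitting atop an existing \"ubertangle); the paper instead gives three small ad~hoc absorptions, one for each residue class, each costing $+1$ per overpass as your bookkeeping predicts.
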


\begin{proof}
Take $T_{3,s}$ and place it in its closed braid representation as shown in Figure \ref{fig:T_3_qpic}(a). Applying Move 6 to the bottommost two overpasses yields a diagram which can be simplified as in Figure \ref{fig:T_3_qpic}(b). There are $s-2$ remaining overpasses, which we eliminate in groups of 3 using Move 7, adding 4 strands to the crossing for every 3 overpasses removed. At the end of this process we are left with 0, 1, or 2 overpasses, which we eliminate as shown in Figure \ref{fig:T_3_reduction}.
\end{proof}
\begin{figure}[here]
\centering
\begin{subfigure}[b]{.3\textwidth}
\centering
\includegraphics[height=35mm]{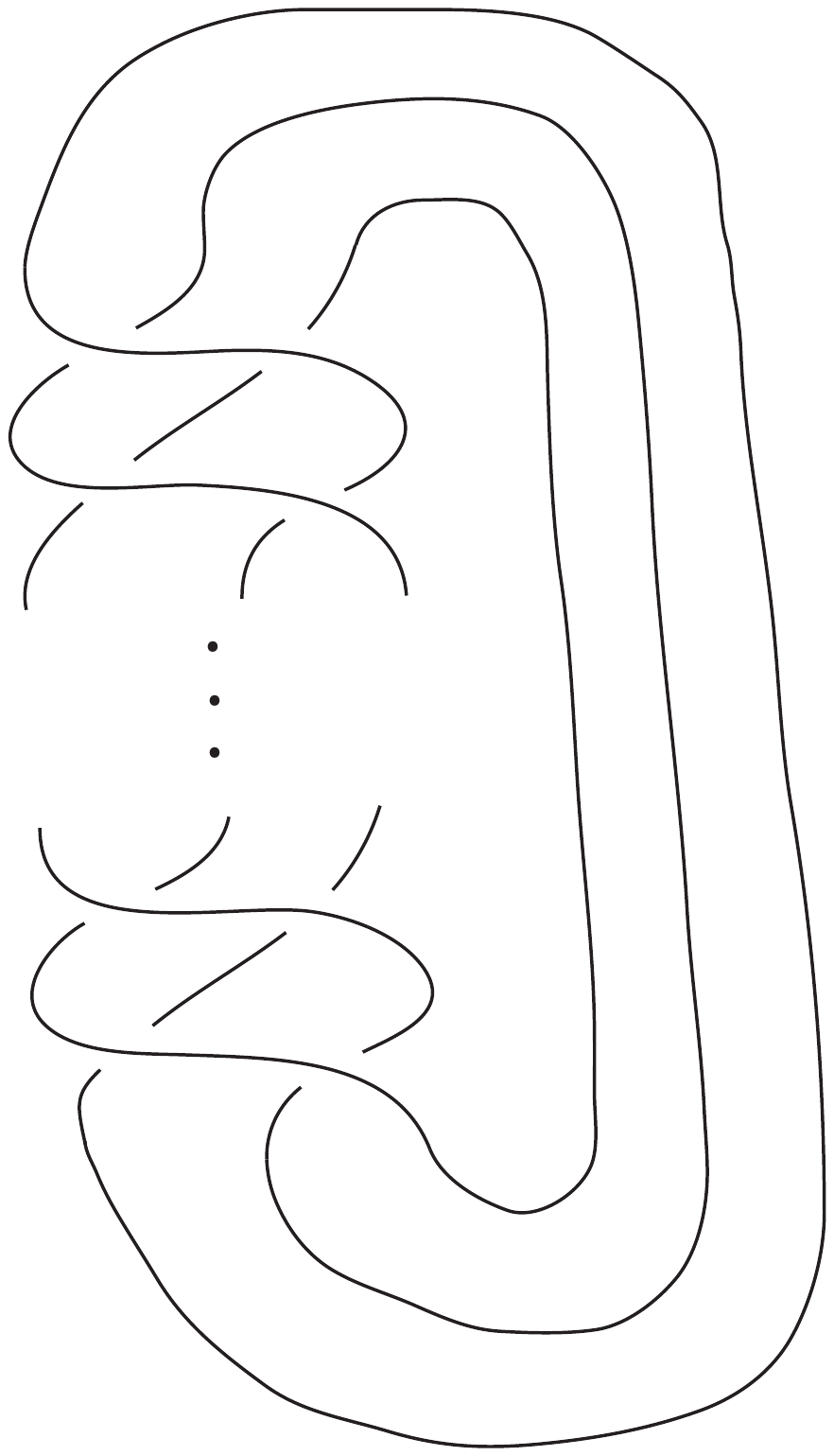}
\caption{}
\label{fig:T_3_q}
\qquad
\end{subfigure}
\begin{subfigure}[b]{.3\textwidth}
\centering
\includegraphics[height=35mm]{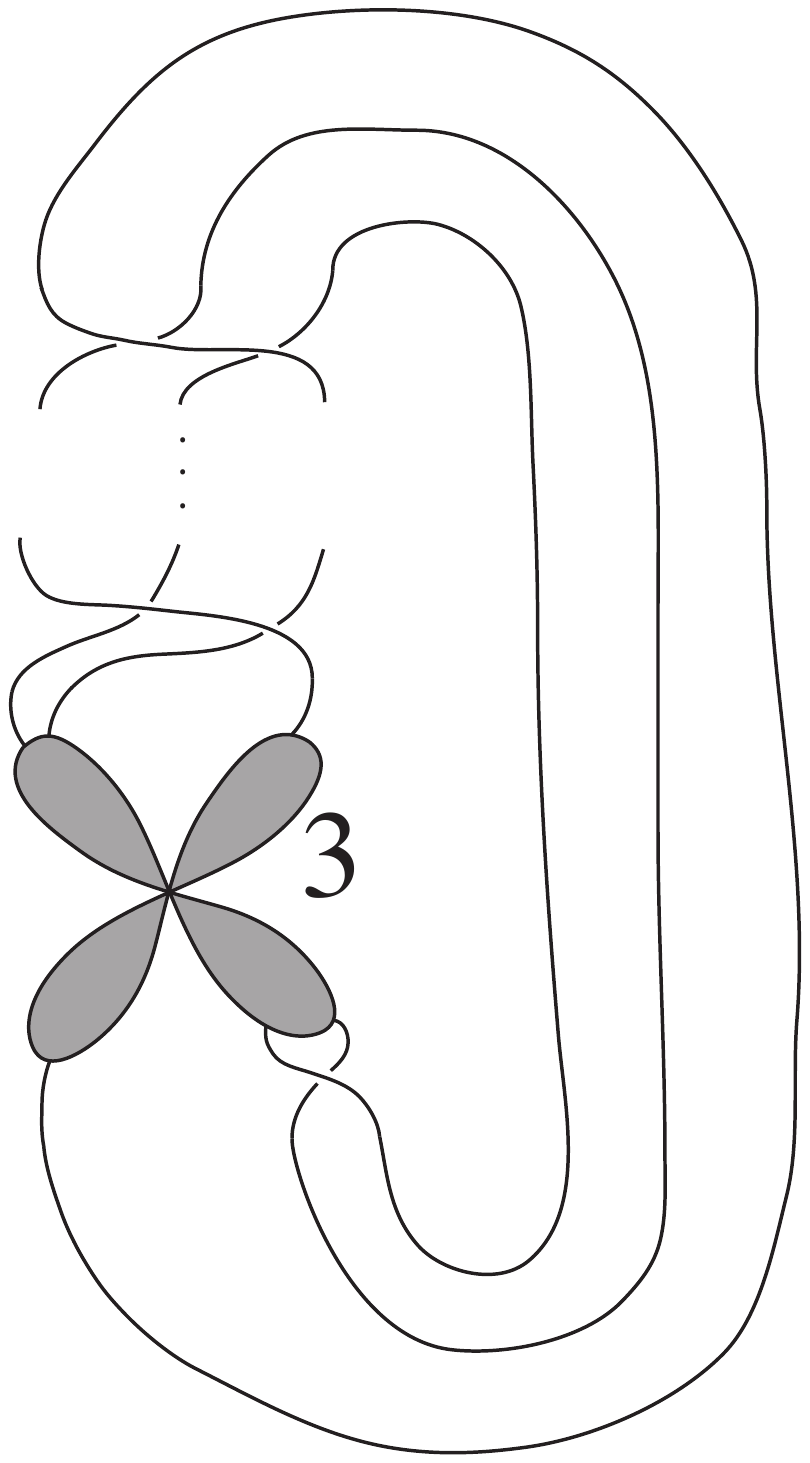}
\caption{}
\label{fig:T_3_q2}
\qquad
\end{subfigure}
\caption{Initial move on a $T_{3,s}$ torus knot.}
\label{fig:T_3_qpic}
\qquad
\end{figure}

\begin{figure}[here]
\centering
\begin{subfigure}[b]{.3\textwidth}
\includegraphics[height=32mm]{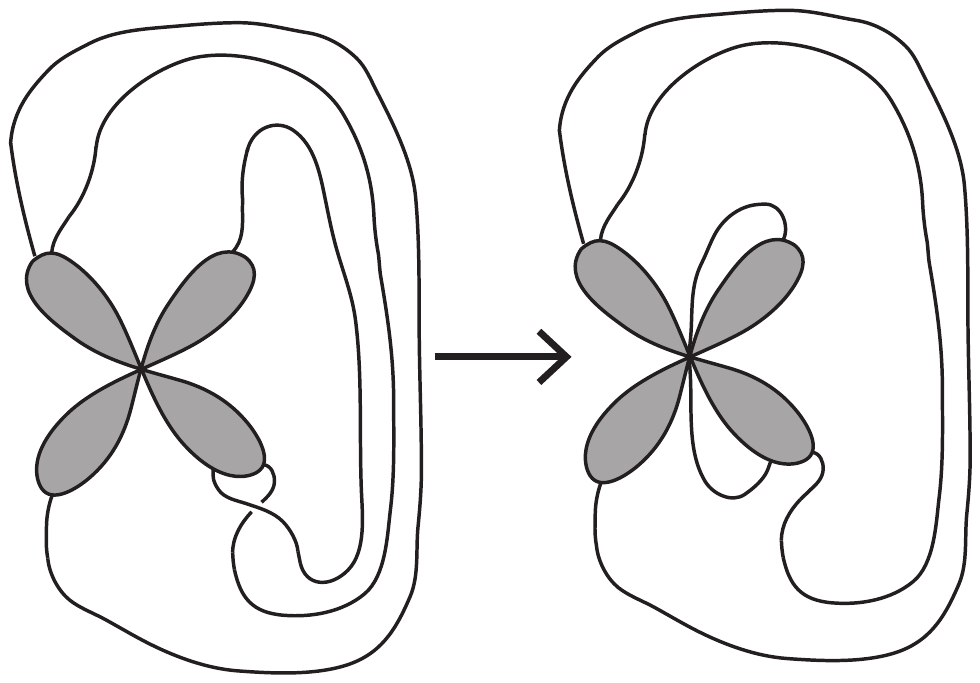}
\caption{$s-2\equiv0$ (mod 3).}
\label{fig:T_3_q3}
\qquad
\end{subfigure}
\begin{subfigure}[b]{.3\textwidth}
\includegraphics[height=32mm]{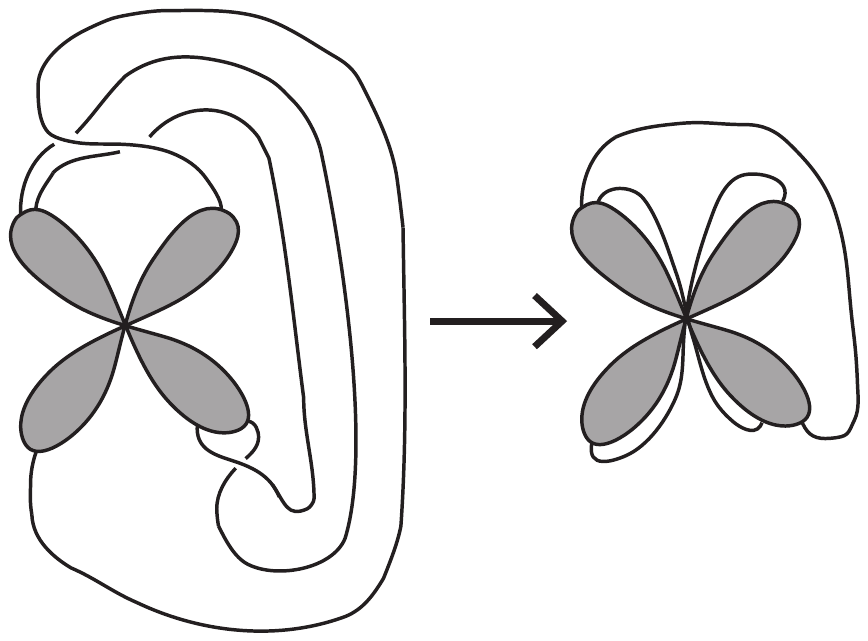}
\caption{$s-2\equiv1$ (mod 3).}
\label{fig:T_3_q4}
\qquad
\end{subfigure}
\\\vspace{4mm}
\begin{subfigure}[b]{.3\textwidth}
\includegraphics[height=32mm]{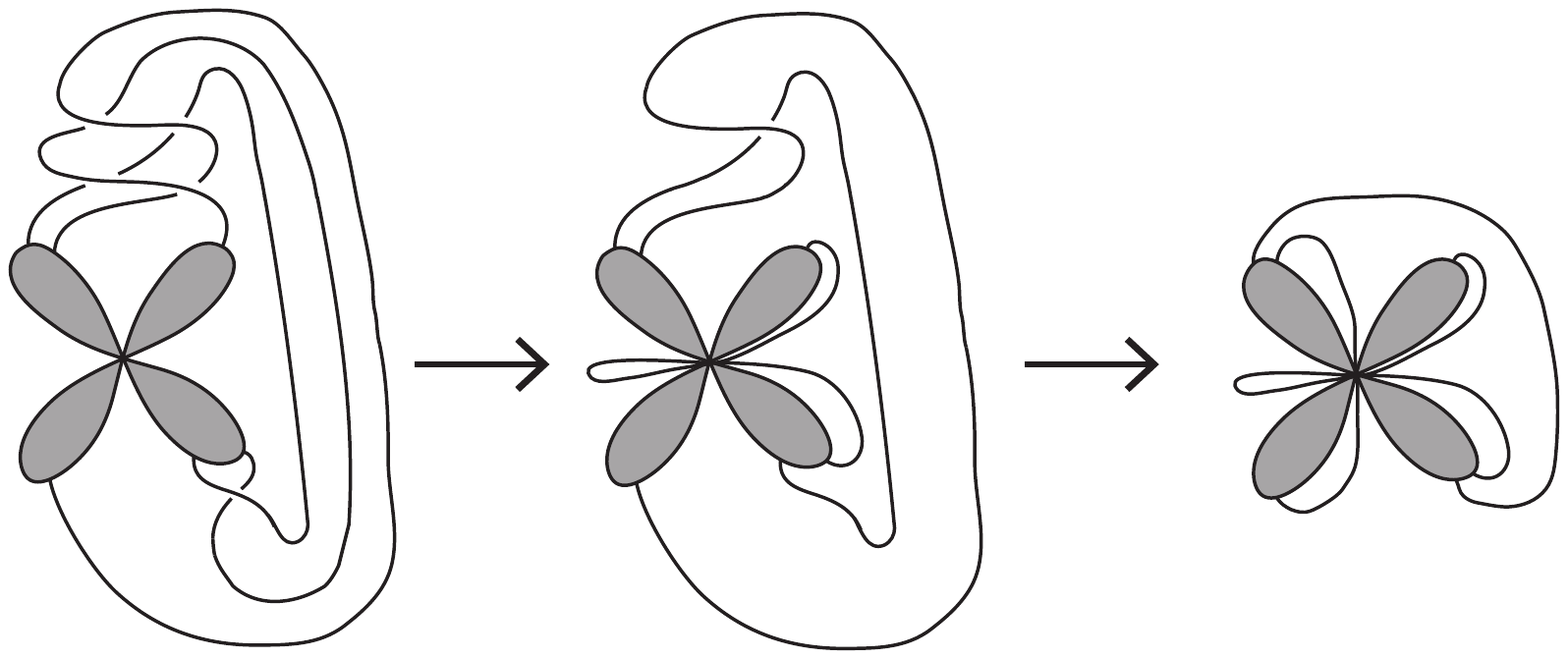}
\caption{$s-2\equiv2$ (mod 3).}
\label{fig:T_3_q5}
\qquad
\end{subfigure}
\caption{Subsequent moves on a $T_{3,s}$ torus knot.}
\label{fig:T_3_reduction}
\qquad
\end{figure}

We now consider petal number for certain torus knots and links.

\begin{thm} \label{torusthm}

Let $T_{r,s}$ be a torus knot with $s\equiv \pm1$(mod $r$).  Then
	\[p(T_{r,s}) \leq \left\{\begin{array}{cc}
		2s -1 \quad \text{for} & s\equiv 1(mod \, r) \\
		2s + 3 \quad \text{for} & s\equiv -1(mod \, r).\end{array}\right.\]

\end{thm}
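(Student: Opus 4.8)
The plan is to realize $T_{r,s}$ as a closed braid and then run the same kind of strand-absorption argument used in the preceding torus-knot theorem, but now keeping careful track of nesting so that the resulting übercrossing projection is actually a petal (or pre-petal) projection. Concretely, for $s\equiv 1 \pmod r$ I would write $s = qr+1$ and present $T_{r,s}$ as the closure of the $r$-braid $(\sigma_1\sigma_2\cdots\sigma_{r-1})^{s}$; the extra ``$+1$'' in $s$ is exactly what lets one of the $r$ strands be pulled around to serve as a long trivial arc, so that the braid closure can be combed into a stack of columns of twists to which Moves 6 and 7 apply. First I would apply Move 6 once at the bottom to start an übertangle, then repeatedly apply Move 7 to absorb the remaining $\sigma_1\cdots\sigma_{r-1}$ blocks in groups, each application adding a controlled number of strands to the central crossing and, crucially, adding only innermost (non-nesting) loops. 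Counting the loops produced this way should give a pre-petal projection with $2s-2$ loops, hence a petal projection with $2s-1$ loops.

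For the case $s\equiv -1\pmod r$, write $s = qr-1$ and observe that $T_{r,s}=T_{r,qr-1}$ does not admit quite the same clean ``one free strand'' picture; the mirror/reversal $(\sigma_1\cdots\sigma_{r-1})^{-1}$ discrepancy forces a small correction. I would handle this by first isotoping $T_{r,qr-1}$ to $T_{r,qr+1}$-like form at the cost of a bounded local tangle — essentially inserting one extra full twist region — and then running the identical Move 6 / Move 7 reduction. The bookkeeping should show that this costs $4$ additional loops over the $s\equiv 1$ count, i.e. $2s+3$ loops, matching the claimed bound. An alternative, cleaner route for this case is to use the composition-type stacking from Theorem \ref{compostionthm}/Figure \ref{fig:composition} style arguments: peel off a short piece of the braid as a separate small petaltangle and glue, which again accounts for the $+4$ discrepancy.

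The key steps in order: (1) fix the closed-braid presentation of $T_{r,s}$ and identify the free strand when $s\equiv 1\pmod r$; (2) apply Move 6 to initialize an übertangle at the bottom of the braid; (3) iterate Move 7 to absorb the periodic blocks, verifying at each stage that every newly created loop is innermost so the projection stays pre-petal; (4) tally the loop count to get $2s-2$, then fold the nesting loop (reverse of the pre-petal/petal passage noted after Corollary \ref{cor:petal}) to obtain a genuine petal projection with $2s-1$ loops; (5) for $s\equiv -1\pmod r$, reduce to the previous case via a local modification and re-tally to get $2s+3$.

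The main obstacle I expect is step (3): ensuring that Moves 6 and 7, which were stated and proved only as local simplifications of braid segments and übertangles, actually interact correctly when applied repeatedly around a braid \emph{closure} without secretly creating nesting loops or illegal crossings. In particular one must check that the lobes of the growing übertangle can always be split by a straight line bisecting the central crossing (the multi-lobe convention from Figure \ref{fig:torus_knot_3}), and that the ``free strand'' really does close up into a single outermost loop rather than threading through others. Verifying this nesting structure carefully — probably by an explicit picture of the $r$-fold-lobed petaltangle after all absorptions — is where the real work lies; the loop count itself is then a routine arithmetic check against $\lfloor (s-2)/r\rfloor$-style floor expressions, which here collapse because $s\equiv\pm1\pmod r$.
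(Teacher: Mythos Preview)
Your approach is quite different from the paper's, and the obstacle you flag in step (3) is a genuine gap rather than a detail to be filled in. Moves 6 and 7 are stated and proved only for \"ubertangles; the multi-lobed \"ubertangle produced by iterating Move 7 has no reason to consist solely of non-nesting loops, and indeed in the $T_{3,s}$ theorem immediately preceding this one the paper claims only an \"ubercrossing bound, not a petal bound. To carry your plan through you would need petaltangle versions of Moves 6 and 7 (analogous to Moves $1a'$ and $1a''$) together with a proof that they can be chained around the full braid closure without creating nesting; you supply neither, and neither is obvious. Your arithmetic is also unchecked: for general $r$ the increment to the crossing multiplicity under Move 7 is not stated in the paper, so making the total come out to exactly $2s-2$ pre-petal loops is a computation you have not done. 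The $s\equiv -1\pmod r$ case is essentially a handwave.

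The paper sidesteps the nesting question entirely by taking a different route: it returns to the algorithm of Section~2 (Theorem~\ref{thm:uberalgorithm} and Corollary~\ref{cor:petal}), which automatically produces a pre-petal projection whose petal count is one more than the number of times the diagram crosses the axis $A$. The work is then purely combinatorial. Starting from the polygonal closed-braid diagram of $T_{r,s}$, the paper explicitly constructs a piecewise-linear axis $A$ such that every first-encounter overcrossing lies on one side of $A$ and every undercrossing on the other. The congruence $s\equiv \pm 1\pmod r$ is exactly what allows $A$ to be built from $2(s\mp 1)/r$ segments, each meeting the braid diagram $r$ times; this gives $2(s\mp 1)$ intersections and hence $2s-1$ or $2s+3$ petals. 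No local moves are used at all, and no nesting analysis is required because the Section~2 algorithm guarantees the pre-petal form by construction.
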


\begin{proof}  Let $K = T_{r,s}$ with $s\equiv 1$(mod $r$).  Let $B$ be the canonical braid projection of $K$ in the plane, in polygonal form as in Figure \ref{torus3}.  Label the horizontal segments which cross over $r$ other segments $1, 2, ..., s$ from bottom to top.  Call these segments the non-trivial horizontal segments.  Label the slanted segments $1', 2', ..., s'$, where segment $n'$ is the segment whose bottommost endpoint is on horizontal segment $n$.  This leaves the bottom $r-1$ segments without labels; let each of these segments share the label of the slanted segment at the top of the braid which it is immediately connected to along the closure of the braid.

\begin{figure}
	\includegraphics[height= 140mm]{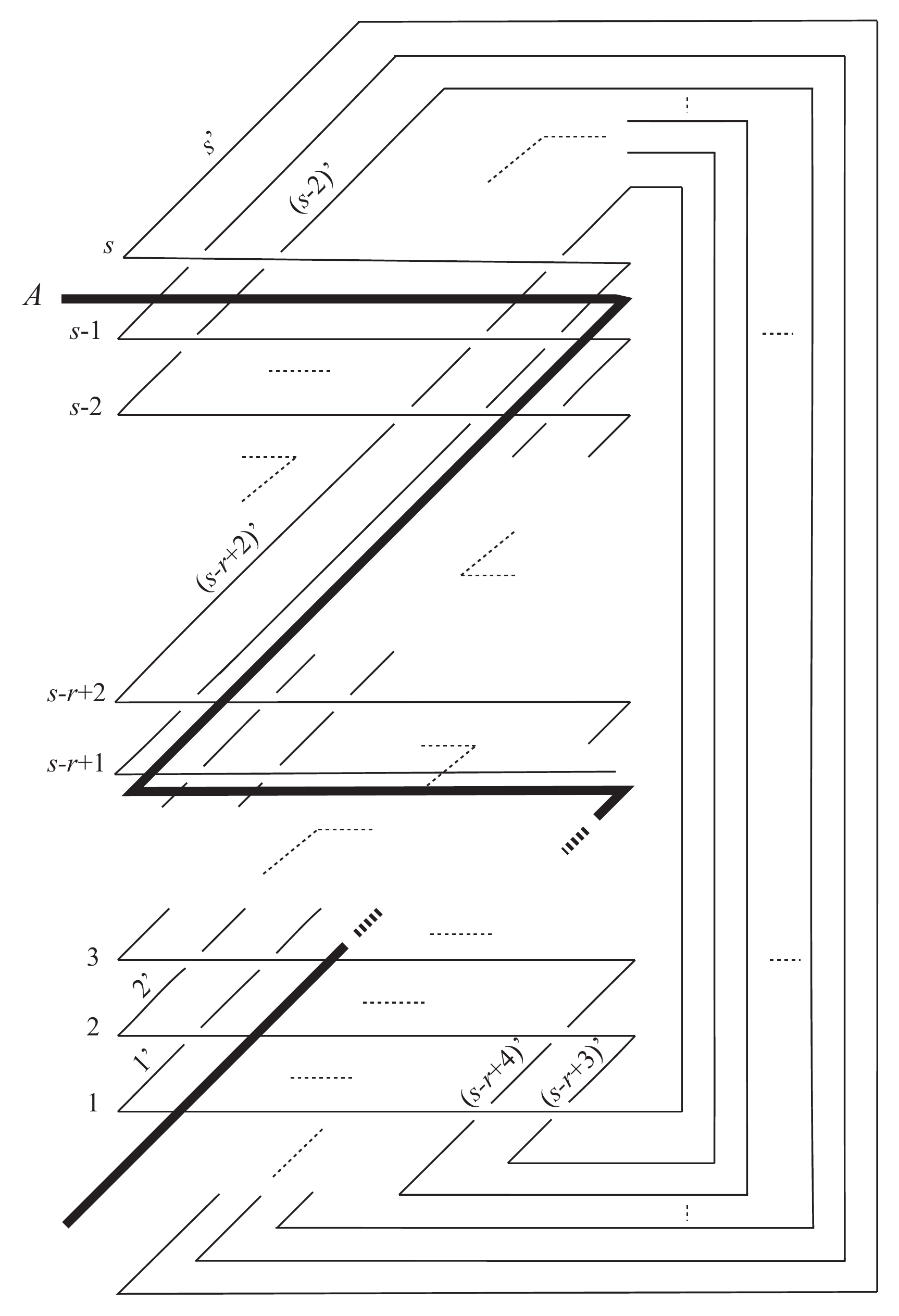}
	\caption{Defining the axis $A$ for a projection of certain torus knots with overcrossings to the right and undercrossings to the left.}
	\label{torus3}
\end{figure}

Beginning at the left endpoint of segment $s$, travel along segment $s$ and continue to traverse the entire braid.  Label each crossing the first time it is passed as either an overcrossing or an undercrossing, as in Step 2 of the algorithm of Section 2.  We shall explicitly construct the axis $A$ of the algorithm, which partitions these labels of over and undercrossings and gives us information on the number of petals in one petal diagram of $K$.  To do so, the following observation is handy.

Suppose we are traveling along a slanted segment $m'$ and we encounter the horizontal segment $n$ at the $i$th crossing along it, ordering the crossings along segment $n$ from left to right.  Continuing along this slanted segment, we find that the next non-trivial horizontal segment is encountered at the $(i-1)$-th crossing on it, the next successive non-trivial horizontal segment is encountered at the $(i-2)$-th crossing on it, and so on.  In general, the $k$-th successive non-trivial horizontal segment (after the first crossing of segment $n$) is encountered at the $(i-k)$(mod $r$) crossing on segment $(i-k)$(mod $s$).  Here we take the 0-th crossing on a segment to be its leftmost endpoint.  Note that an `encounter' with a non-trivial horizontal segment can entail either crossing the segment or traveling along it.

The above observation is vital in building the partition of undercrossings and overcrossings.  For assume the crossing of segments $n$ and $m'$, denoted $c_{n,m'}$, is labeled as an undercrossing.  Traveling through $s$ non-trivial horizontal segments, the constraint $s\equiv 1$(mod $r$) brings us to the crossing on segment $n$ directly left of $c_{n,m'}$, provided $c_{n,m'}$ is not the leftmost crossing on segment $n$.  This crossing has not yet been labeled, else $c_{n,m'}$ would be labeled as an overcrossing; therefore, it becomes labeled as an undercrossing.  Thus, all crossings to the left of the rightmost undercrossing on a horizontal segment are also undercrossings.  In particular, the crossings on any horizontal segment $n$ can be partitioned into two sections, the left section containing only undercrossings and the right section containing only overcrossings.  Furthermore, the rightmost undercrossing on a segment is the first undercrossing on that segment to receive a labeling as the knot is traversed.  With this machinery, we may construct $A$.

Let $A$ be a piecewise line consisting of horizontal and slanted segments; the horizontal segments lie directly below each segment with a label $\ell\equiv 1$(mod $r$) $(\ell >1)$, so that the slanted segments connect the rightmost point of a horizontal segment with the leftmost point of the horizontal segment below it.  Note that the slanted segments lie directly to the right of each segment $\ell'$, where $\ell\equiv 1$(mod $r$) (Figure \ref{torus3}).

On segments $n$ with $n\not\equiv 1$(mod $r$), the rightmost undercrossing occurs at the crossing with a segment $\ell'$, where $\ell\equiv 1$(mod $r$).  These are precisely the crossings directly to the left of the slanted segments of $A$.  Furthermore, segments labeled $\ell\equiv 1$(mod $r$), $\ell >1$, contain only overcrossings, while segments $\ell\equiv 0$(mod $r$) contain only undercrossings.  These are the segments between which the horizontal segments of $A$ lie.  Thus, traveling from the bottom of $A$ to the top, one has only undercrossings to the immediate left of $A$ and only overcrossings to the immediate right of $A$.  But then all crossings to the left of $A$ must be undercrossings and all crossings to the right of $A$ must be overcrossings.  Thus, $A$ provides the above partition of over and undercrossings.

Isotope $A$ and $B$ without changing crossings, so that $A$ is a straight line.  Taking $A$ as in the algorithm of Section 2, the projection $B$ is already in the form the algorithm requires.  Using the algorithm, a petal projection of $K$ is produced with $|A\cap B|+1$ petals.  Each line segment comprising $A$ intersects $B$ $r$ times.  There are a total of $2(\frac{s-1}{r})$ segments in $A$, yielding a petal number of no more than $2s-1$.

The proof for $s\equiv -1$(mod $r$) is in exactly the same vein, but the changed constraint forces all crossings to the right of an undercrossing on a non-trivial horizontal strand to be labeled as an undercrossing.  This forces the horizontal segments of $A$ to lie directly above each segment with a label $\ell\equiv 1$(mod $r$).  Each line segment of $A$ still intersects $B$ $r$ times, but now there are $2(\frac{s+1}{r})$ such segments, yielding a petal number of no more than $2s+3$.
\end{proof}

\begin{cor}\label{toruscor}
The $T_{r,r+1}$ torus knot satisfies $p(T_{r,r+1})=2r+1$.
\end{cor}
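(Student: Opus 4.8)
The plan is to sandwich $p(T_{r,r+1})$ between matching upper and lower bounds. For the upper bound, observe that $s=r+1$ satisfies $s\equiv 1\pmod r$, so Theorem \ref{torusthm} applies directly and yields $p(T_{r,r+1})\le 2(r+1)-1=2r+1$. This half is immediate and requires no new work.

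The content is in the lower bound $p(T_{r,r+1})\ge 2r+1$. The tool is the inequality $p(K)\ge\alpha(K)$, already recorded just before Theorem \ref{nestinglemma}: placing each of the $p(K)$ loops of a minimal petal projection in its own half-plane hinged on the vertical axis through the \"ubercrossing produces an arc presentation with $p(K)$ pages. It thus suffices to show $\alpha(T_{r,r+1})\ge 2r+1$, and here I would invoke the known value of the arc index of a torus knot, namely $\alpha(T_{p,q})=p+q$ for $2\le p<q$; taking $p=r$ and $q=r+1$ gives $\alpha(T_{r,r+1})=2r+1$. Combining with the upper bound completes the proof. As a consistency check, for $r=2$ this reduces to $p(T_{2,3})=5$, the petal number of the trefoil.

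The main obstacle is obtaining the lower bound uniformly in $r$: the other invariant bounds from Section 3 are not strong enough once $r\ge 3$. The stick-number bound $p(K)\ge\tfrac{2s(K)+1}{3}$ combined with $s(T_{r,r+1})=2r+2$ \cite{Jin} only gives $p(T_{r,r+1})\ge\lceil(4r+5)/3\rceil$, which equals $2r+1$ only when $r=2$; and the braid-index bound $p(K)\ge\beta(K)+3$ (obtained from Corollary \ref{braidthm} together with $\ub(K)<p(K)$) gives $p(T_{r,r+1})\ge r+3$, again matching $2r+1$ only when $r=2$. So the arc-index computation for torus knots is the essential external input. If one preferred a self-contained argument, one would instead have to reprove $\alpha(T_{r,r+1})\ge 2r+1$ directly, for instance via the Kauffman-polynomial lower bound for arc index, which is known to be sharp on torus knots; but this is precisely the content of the cited result.
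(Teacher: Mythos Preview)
Your argument is correct and follows the same approach as the paper: the upper bound comes from Theorem~\ref{torusthm} with $s=r+1\equiv 1\pmod r$, and the lower bound from $p(K)\ge\alpha(K)$ together with the known arc index $\alpha(T_{r,s})=r+s$ (the paper cites Etnyre--Honda~\cite{EtHo} for this). Your additional discussion of why the stick-number and braid-index bounds fall short is accurate but extraneous to the proof itself.
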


\begin{proof}
Since $\alpha(T_{r,s})=r+s$ by \cite{EtHo}, we have $\alpha(T_{r,r+1})= 2r+1$. We have $p(T_{r,r+1})\geq \alpha(T_{r,r+1})$, so the upper bound on $p(T_{r,r+1})$ from Theorem \ref{torusthm} is also a lower bound.
\end{proof}

\section{Knot Table}

Given an oriented petal projection of $K$ with $n$ loops, follow $K$ in the direction of the orientation, starting at the top strand of the \"ubercrossing. This gives rise to a sequence of levels at which the \"ubercrossing is traversed, starting at 1, which identifies $K$ up to chirality. Together with the orientation, it completely identifies $K$. 

This permutation of $1,\dots, n$ is not a unique representation of $K$.  For example, given a petal projection of $K$ with $n$ loops, we may perform a Type I Reidemeister move and fold the new loop over the \"ubercrossing, creating a petal projection of $K$ with $n+2$ petals from which we can obtain a permutation of $1,\dots, n+2$ representing $K$. Moreover, the original permutation is not necessarily the unique permutation of $1,\dots, n$ representing $K$. For example, $(1, 6, 3, 5, 7, 2, 4)$ and $(1, 3, 5, 2, 7, 4, 6)$ both represent  $4_1$. By a \emph{minimal representation} of $K$ we mean a permutation of $1,\dots, p(K)$ which corresponds to $K$. Table 1 lists the petal number and a minimal representation of the prime knots with fewer than 10 crossings, assuming the corresponding petal projection is traversed counterclockwise. This table was produced by considering permutations of 11 or fewer odd numbers. The corresponding petal diagrams were then fed into the Culler-Dunfield-Weeks  program SnapPy  \cite{Culler}, which can be used to identify the hyperbolic knots that result. The prime non-hyperbolic knots of nine or fewer crossings are only the 2-braid knots $3_1$, $5_1$, $7_1$ and  $9_1$ and the torus knot $8_{19}$. These can be handled by hand. The knots $9_{34}$ and $9_{40}$ do not appear in the list of knots with petal number at most 11, and they can be shown to be realized with petal number 13, completing the list.

\begin{remark}Observe that if a sequence $(a_1,\dots, a_n)$ representing a knot $K$ contains $a_i$, $a_{i+1}$ (where subscripts are computed modulo $n$) such that $|a_i-a_{i+1}|=1$, the loop corresponding to $a_i$ and $a_{i+1}$ can be removed from the corresponding petal diagram, reducing the number of  petals by two. One might hope that this adjacency property would manifest itself in any non-minimal representation of $K$, allowing us to reduce any sequence to a minimal one representing $K$. In particular this would give an algorithm for detecting the unknot: place any diagram in a petal projection using our algorithm, and repeatedly eliminate loops formed by adjacent strands. This is not the case, however. For example, the sequence $(1,9,3,5,7,10,2,4,8,11,6)$ represents the unknot.
\end{remark}

It is then natural to ask: how can we characterize the equivalence class of sequences representing a knot? In particular, how can we determine if two sequences represent the same knot? It would be nice to develop an analogue of Reidemeister moves for petal projection sequences.

\begin{center}
	\begin{table}[htbp!]
		\begin{minipage}{.48\linewidth}
		\begin{tabular}{| c | c | c |}
			\hline
			Knot & $p(K)$ & A minimal representation \\\hline
			$3_1$ & 5 & $(1,3,5,2,4)$\\\hline
			$4_1$ & 7 & $(1,3,5,2,7,4,6)$ \\\hline
			$5_1$ & 7 & $(1, 3, 6, 2, 5, 7, 4)$ \\\hline
			$5_{2}$ & 7 & $(1, 3, 6, 2, 4, 7, 5)$\\\hline
			$6_1$ & 9 & $(1, 3, 5, 2, 8, 4, 6, 9, 7)$\\\hline
			$6_{2}$ & 9 & $(1, 3, 5, 2, 8, 4, 7, 9, 6)$\\\hline
			$6_{3}$ & 9 & $(1, 3, 5, 2, 9, 7, 4, 8, 6)$\\\hline
			$7_1$ & 9 & $(1,8,4,9,5,3,6,2,7)$\\\hline
			$7_{2}$ & 9 & $(1, 3, 6, 9, 7, 2, 4, 8, 5)$\\\hline
			$7_{3}$ & 9 & $(1, 3, 6, 9, 7, 2, 5, 8, 4)$\\\hline
			$7_{4}$ & 9 & $(1, 3, 6, 4, 8, 2, 5, 9, 7)$\\\hline
			$7_{5}$ & 9 & $(1, 3, 6, 4, 8, 2, 7, 9, 5)$\\\hline
			$7_{6}$ & 9 & $(1, 3, 6, 4, 9, 7, 2, 8, 5)$\\\hline
			$7_7$ & 9 & $(1, 3, 7, 9, 4, 6, 2, 8, 5)$\\\hline
			$8_1$ & 11 & $(1, 3, 5, 2, 8, 11, 9, 4, 6, 10, 7)$\\\hline
			$8_{2}$ & 11 & $(1, 3, 5, 2, 9, 4, 7, 11, 8, 10, 6)$\\\hline
			$8_{3}$ & 11 & $(1, 3, 6, 2, 9, 5, 11, 4, 7, 10, 8)$\\\hline
			$8_{4}$ & 11 & $(1, 3, 5, 8, 6, 2, 10, 4, 7, 11, 9)$\\\hline
			$8_{5}$ & 11 & $(1, 3, 5, 8, 6, 11, 9, 2, 10, 4, 7)$\\\hline
			$8_{6}$ & 11 & $(1, 3, 5, 2, 8, 6, 10, 4, 9, 11, 7)$\\\hline
			$8_7$ & 11 & $(1, 3, 5, 2, 10, 7, 4, 8, 11, 9, 6)$\\\hline
			$8_8$ & 11 & $	(1, 3, 5, 2, 8, 6, 11, 9, 4, 10, 7)$\\\hline
			$8_9$ & 11 & $(1, 3, 5, 9, 2, 7, 11, 6, 4, 10, 8)$\\\hline
			 $8_{10}$ & 11 & $(1, 3, 5, 2, 9, 7, 11, 8, 4, 10, 6)$\\\hline
			$8_{11}$ & 11 & $(1, 3, 5, 2, 8, 11, 9, 4, 7, 10, 6)$\\\hline
			$8_{12}$ & 11 & $(1, 3, 5, 2, 9, 11, 8, 6, 10, 4, 7)$\\\hline
			$8_{13}$ & 11 & $(1, 3, 5, 2, 10, 7, 4, 9, 11, 8, 6)$\\\hline
			$8_{14}$ & 11 & $(1, 3, 5, 2, 10, 8, 11, 6, 9, 4, 7)$\\\hline
			$8_{15}$ & 11 & $(1, 3, 5, 2, 8, 11, 7, 9, 4, 10, 6)$\\\hline
			$8_{16}$ & 11 & $(1, 3, 5, 8, 6, 2, 11, 9, 4, 10, 7)$\\\hline
			$8_{17}$ & 11 & $(1, 3, 5, 8, 6, 2, 10, 4, 9, 11, 7)$\\\hline
			$8_{18}$ & 11 & $(1, 3, 7, 4, 10, 2, 8, 6, 11, 9, 5)$\\\hline
			 $8_{19}$ & 7 & $(1, 4, 7, 3, 6, 2, 5)$\\\hline
			$8_{20}$ & 9 & $(1, 3, 5, 8, 2, 6, 9, 4, 7)$\\\hline
			$8_{21}$ & 9 & $(1, 3, 5, 8, 2, 7, 4, 9, 6)$\\\hline
			$9_1$ & 11 & $(1,10,5,11,6,4,7,3,8,2,9)$\\\hline
			$9_{2}$ & 11 & $(1, 3, 6, 10, 7, 2, 4, 8, 11, 9, 5)$\\\hline
			$9_{3}$ & 11 & $(1, 3, 7, 5, 9, 2, 6, 11, 8, 10, 4)$\\\hline
			$9_{4}$ & 11 & $(1, 3, 6, 10, 7, 2, 5, 8, 11, 9, 4)$\\\hline
			$9_{5}$ & 11 & $(1, 3, 6, 4, 8, 11, 9, 2, 5, 10, 7)$\\\hline
			$9_{6}$ & 11 & $(1, 3, 6, 4, 9, 2, 7, 11, 8, 10, 5)$\\\hline
			$9_7$ & 11 & $(1, 3, 6, 10, 7, 2, 4, 9, 11, 8, 5)$\\\hline
		 \end{tabular}
 	\end{minipage}\begin{minipage}{.48\linewidth}
		\begin{tabular}{| c | c | c |}
			\hline
			Knot & $p(K)$ & A minimal representation \\\hline
			$9_8$ & 11 & $(1, 3, 6, 10, 8, 4, 11, 7, 2, 9, 5)$\\\hline
			$9_9$ & 11 & $(1, 3, 6, 10, 7, 2, 5, 9, 11, 8, 4)$\\\hline
			$9_{10}$ & 11 & $(1, 3, 7, 5, 8, 11, 9, 2, 6, 10, 4)$\\\hline
			$9_{11}$ & 11 & $(1, 3, 6, 4, 10, 7, 2, 8, 11, 9, 5)$\\\hline
			$9_{12}$ & 11 & $(1, 3, 6, 10, 5, 7, 2, 8, 11, 9, 4)$\\\hline
			$9_{13}$ & 11 & $(1, 3, 6, 4, 9, 2, 5, 11, 8, 10, 7)$\\\hline
			$9_{14}$ & 11 & $(1, 3, 7, 10, 5, 2, 9, 11, 8, 4, 6)$\\\hline
			$9_{15}$ & 11 & $(1, 3, 6, 4, 10, 8, 2, 7, 11, 9, 5)$\\\hline
			$9_{16}$ & 11 & $(1, 3, 7, 4, 10, 2, 9, 11, 6, 8, 5)$\\\hline
			$9_{17}$ & 11 & $(1, 3, 7, 10, 4, 6, 2, 9, 11, 8, 5)$\\\hline
			$9_{18}$ & 11 & $(1, 3, 6, 4, 8, 11, 9, 2, 7, 10, 5)$\\\hline
			$9_{19}$ & 11 & $(1, 3, 7, 5, 9, 11, 4, 8, 2, 10, 6)$\\\hline
			 $9_{20}$ & 11 & $(1, 3, 6, 4, 10, 8, 2, 9, 5, 11, 7)$\\\hline
			$9_{21}$ & 11 & $(1, 3, 6, 4, 10, 7, 2, 9, 11, 8, 5)$\\\hline
			$9_{22}$ & 11 & $(1, 3, 6, 4, 9, 7, 2, 10, 5, 11, 8)$\\\hline
			$9_{23}$ & 11 & $(1, 3, 6, 4, 9, 11, 7, 2, 8, 10, 5)$\\\hline
			$9_{24}$ & 11 & $(1, 3, 6, 11, 5, 7, 2, 9, 4, 10, 8)$\\\hline
			$9_{25}$ & 11 & $(1, 3, 6, 4, 8, 11, 7, 9, 2, 10, 5)$\\\hline
			$9_{26}$ & 11 & $(1, 3, 7, 5, 10, 6, 2, 9, 11, 4, 8)$\\\hline
			$9_{27}$ & 11 & $(1, 3, 6, 4, 11, 7, 2, 8, 10, 5, 9)$\\\hline
			$9_{28}$ & 11 & $(1, 3, 6, 11, 5, 7, 2, 8, 10, 4, 9)$\\\hline
			$9_{29}$ & 11 & $(1, 3, 6, 4, 10, 7, 2, 8, 5, 11, 9)$\\\hline
			$9_{30}$ & 11 & $(1, 3, 6, 4, 10, 8, 2, 7, 11, 5, 9)$\\\hline
			$9_{31}$ & 11 & $(1, 3, 6, 10, 5, 7, 2, 9, 11, 4, 8)$\\\hline
			$9_{32}$ & 11 & $(1, 3, 6, 4, 9, 11, 7, 2, 10, 5, 8)$\\\hline
			$9_{33}$ & 11 & $(1, 3, 6, 4, 10, 2, 7, 11, 9, 5, 8)$\\\hline
			$9_{34}$ & 13 & $(1,3,7,9,13,5,11,8,2,4,6,10,12)$\\\hline
			$9_{35}$ & 11 & $(1, 3, 10, 6, 2, 9, 11, 8, 5, 7, 4)$\\\hline
			$9_{36}$ & 11 & $(1, 3, 6, 4, 9, 7, 11, 8, 2, 10, 5)$\\\hline
			$9_{37}$ & 11 & $(1, 3, 7, 10, 4, 6, 2, 8, 11, 9, 5)$\\\hline
			$9_{38}$ & 11 & $(1, 3, 6, 4, 9, 2, 7, 11, 5, 10, 8)$\\\hline
			$9_{39}$ & 11 & $(1, 3, 6, 4, 10, 2, 7, 9, 5, 11, 8)$\\\hline
			$9_{40}$ & 13 & $(1,11,7,5,13,2,10,8,6,12,4,9,3)$\\\hline
			$9_{41}$ & 11 & $(1, 3, 7, 11, 4, 8, 10, 6, 2, 9, 5)$\\\hline
			$9_{42}$ & 9 & $(1, 3, 6, 2, 9, 5, 8, 4, 7)$\\\hline
			$9_{43}$ & 9 & $(1, 3, 6, 9, 5, 8, 2, 7, 4)$\\\hline
			$9_{44}$ & 9 & $(1, 3, 6, 9, 4, 7, 2, 8, 5)$\\\hline
			$9_{45}$ & 9 & $(1, 3, 7, 4, 9, 6, 2, 8, 5)$\\\hline
			$9_{46}$ & 9 & $(1, 3, 6, 9, 5, 2, 8, 4, 7)$\\\hline
			$9_{47}$ & 11 & $(1, 3, 5, 7, 10, 4, 9, 6, 2, 11, 8)$\\\hline
			$9_{48}$ & 11 & $(1, 3, 5, 2, 9, 11, 7, 4, 10, 6, 8)$\\\hline
			$9_{49}$ & 11 & $(1, 3, 5, 2, 7, 11, 8, 4, 10, 6, 9)$\\\hline

\end{tabular}
\end{minipage}
\vspace{.5cm}
\caption{Minimal permutation representations of prime knots with fewer than 10 crossings.}
\end{table}
\end{center}

\end{document}